\newtheorem{theorem}{Theorem}[section]
\newtheorem{lemma}{Lemma}[section]
\newtheorem{prop}{Proposition}[section]
\newtheorem{corollary}{Corollary}[section]
\newtheorem{assumption}{Assumption}[section]
\newtheorem{remark}{Remark}[section]
\newtheorem{setup}{Setup}[section]
\numberwithin{equation}{section}
\def\B{{\mathbb B }}
\def\R{{\mathbb R }}
\def\bea{\vspace{-3mm}
         \begin{eqnarray}
		\arraycolsep=2pt\def\arraystretch{1.5}
		\begin{array}}
\def\eda{\end{array}\vspace{-3mm}\end{eqnarray}}
\def\nnb{\nonumber}
 \providecommand{\tightlist}{%
      \setlength{\itemsep}{0pt}
      \setlength{\parskip}{4pt}}
\title{\vspace{-1cm} An Oracle Gradient Regularized Newton Method for\\ Quadratic Measurements Regression}
\author{Jun Fan$^*$, Jie Sun$^{\dag*}$, Ailing Yan$^*$ and Shenglong Zhou$^\ddag$\\
Institute of Mathematics, Hebei University of Technology, China$^*$\\
School of EECMS, Curtin University, Australia and \\School of Business, National University of Singapore, Singapore$^\dag$\\
School of Mathematics and Statistics, Beijing Jiaotong University, China$^\ddag$
\thanks{Corresponding author: Shenglong Zhou}}
\date{}
\begin{document}
\flushbottom

\maketitle

 \vspace{-12mm}

\begin{abstract}
\noindent {\bf Abstract:}   
Recovering an unknown signal from quadratic measurements has gained popularity due to its wide range of applications, including phase retrieval, fusion frame phase retrieval, and positive operator-valued measures. In this paper, we employ a least squares approach to reconstruct the signal and establish its non-asymptotic statistical properties. Our analysis shows that the estimator perfectly recovers the true signal in the noiseless case, while the error between the estimator and the true signal is bounded by $O(\sqrt{p\log(1+2n)/n})$ in the noisy case, where  $n$ is the number of measurements and 
$p$ is the dimension of the signal. {We then develop a two-phase algorithm, gradient regularized Newton method (GRNM), to solve the least squares problem. It is proven that the first phase terminates within finitely many steps, and the sequence generated in the second phase converges to a unique local minimum at a superlinear rate under certain mild conditions.} Beyond these deterministic results, GRNM is capable of exactly reconstructing the true signal in the noiseless case and achieving the stated error rate with a high probability in the noisy case. Numerical experiments demonstrate that GRNM offers a high level of recovery capability and accuracy as well as fast computational speed.

\vspace{0.3cm}

\noindent{\bf Keywords}~quadratic measurements regression, non-asymptotic statistical analysis, gradient regularized Newton method, global and superlinear convergence, high numerical performance
\vspace{0.3cm}

\noindent{\bf MR(2000) Subject Classification}~ 90C26, 90C30, 90C90

\end{abstract}

\section{Introduction }
Phase retrieval has a wide range of applications in X-ray crystallography, transmission electron microscopy, and coherent diffractive imaging, see \cite{candesetal,candes13,eldaretal15,KSTX15LAA,she-beck-eldar14}   and the references therein. It
aims at recovering the lost phase information through the observed magnitudes.  More recently, \cite{wang-xu19} and \cite{hrwx21} extended the idea of phase retrieval to a more generalized setting to reconstruct a vector $x\in\mathbb{R}^p$ or $\mathbb{C}^p$ from quadratic samples $x^HA_1x,\ldots , x^HA_nx$, where $\{A_1,\ldots,A_n\}$ is a set of Hermitian matrices and $^H$ stands for the Hermitian transpose. The generalized case is capable of including the standard phase retrieval and its various spinoffs as special cases. For example, it  reduces to the standard phase retrieval if $A_i=a_ia_i^H$ with $a_i\in\mathbb{C}^p$, and coincides with the fusion frame
(projection) phase retrieval if
all $A_i$ are orthogonal projections, namely $A_i^2=A_i$. Moreover, if all $A_i$ are positive semi-definite matrices
satisfying $\sum_{i=1}^nA_i=I$, where $I$ is the identity matrix, then it becomes the positive operator valued measure, which is an active research topic in quantum tomography (see e.g. \cite{hmw13}). This generalized case can be described as the following quadratic measurements regression (QMR, \cite{fanetal18}) problem,
\begin{equation}
b_i=\langle x^*, A_ix^*\rangle+{\varepsilon}_i,\quad{} {i\in[n]:=\{1,\ldots ,n\}},\tag{QMR} \label{PQMR}
\end{equation}
where $b\in\mathbb{R}^n$ and $A_i\in \mathbb{S}^{p\times{}p},  {i\in[n]}$ are the observed vector from the real space and measurement matrices from symmetrical matrix space $\mathbb{S}^{p\times{}p}$ respectively, $x^*\in\mathbb{R}^p$ is unknown true signal to be reconstructed, and ${\varepsilon}_i\in\mathbb{R}$ is a random noise with zero mean and variance $\mathbb{E}\varepsilon_i^2$.  Here,   $\langle\cdot,\cdot\rangle$ represents the vector inner product. We permit the variance of ${\varepsilon}_i$ equals to zero for each $i\in[n]$, which is called the the noiseless version of \eqref{PQMR}.

We would like to point out that, here and below, we only focus on  \eqref{PQMR} in real space as the complex case can be reformulated as a real one. In fact, if $A_i \in \mathbb{C}^{p\times p}$ and $x^*\in\mathbb{C}^p$, then let
\begin{eqnarray}
u^*:=\left[  \begin{array}{cr}  \mathcal{R}(x^*)  \\
\mathcal{I}(x^*)
      \end{array}\right],\qquad ~~~~M_i:=\left[  \begin{array}{cr}    \mathcal{R}(A_i) &-\mathcal{I}(A_i)\\
     \mathcal{I}(A_i)&\mathcal{R}(A_i)
      \end{array}\right],\nnb
\end{eqnarray}
where $\mathcal{R}(\cdot)$ and $\mathcal{I}(\cdot)$ denote the real and imaginary part for a complex number respectively.
  Then \eqref{PQMR} can be rewritten as
$$b_i=\langle x^*, A_ix^*\rangle +\varepsilon_i=\langle u^*, M_iu^*\rangle +\varepsilon_i, \qquad{}  i\in[n].$$
Since $\mathcal{R}(A_i)$ is symmetric and $\mathcal{I}(A_i)$ is antisymmetric,  $M_i$ is symmetric. A popular approach to recover signal $x^*$ from (\ref{PQMR}) is to solve the following least squares problem,
\bea{c}\label{LSE-PQMR}
\hat x:=\underset{x\in\mathbb{R}^p}{\rm argmin}~ f(x):=\frac{1}{4n}\sum_{i=1}^n(\langle x, A_ix\rangle-b_i)^2,
\eda
 which is the focal point of our analysis. 

\subsection{Related work}\label{sec:related works}
{\bf (a) Theoretical analysis.}  In terms of the recovery performance of model  (\ref{PQMR}), most attention is paid to the phase retrieval problems. The earlier work can be traced back to \cite{bandeira}, in which authors studied some necessary and sufficient conditions for measurement vectors to yield injective and stable intensity measurements $\{a_1,\ldots, a_n\}$. In \cite{sun18}, it was shown that if the measurements are identically and independently distributed (i.i.d.) complex Gaussian and the number of measurements $n$ is large enough, with a high probability,  there are no spurious local minimizers of model \eqref{LSE-PQMR} and all global minimizers are equal to the target signal. In  \cite{huang20},  a sharp estimation error of the model is achieved without assuming any distributions on the noise if the measurement vectors are random  Gaussian.

For the work on the general quadratic measurements model (\ref{PQMR}),  the authors in \cite{wang-xu19}  introduced the concept of phase retrieval property. They explored the connections among
generalized phase retrieval, low-rank matrix recovery, and nonsingular bilinear form. They presented results on the minimal measurement number needed for recovering a matrix. Furthermore, they applied these results to the phase retrieval problems and showed that  $p\times{}p$ order matrices $\{A_1,\ldots , A_n\}$ have the phase retrieval property
if $n\geq2p-1$ in the real case and $n\geq4p-4$ in the complex case. Subsequently in  \cite{hrwx21}, the almost everywhere phase retrieval property has been investigated for a set of matrices $\{A_1,\ldots , A_n\}$. In  \cite{Thaker}, the authors studied the quadratic feasibility problem and established conditions under which the problem becomes identifiable. Then they adopted model \eqref{LSE-PQMR} to precess this problem and claimed that any gradient-based algorithms from an arbitrary initialization can converge to a globally optimal point with a high probability if matrices {$\{A_1,\ldots , A_n\}$ are Hermitian matrices sampled from a complex Gaussian distribution.

{\bf (b) Algorithmic development.}  When it comes to solving model (\ref{PQMR}), most numerical algorithms can be categorized into two groups. The first group requires a good initial point for better recovery performance. For instance, the Wirtinger flow (WF) method was first developed in \cite{candeswf15}. It combines a good initial guess obtained by the spectral method and a series of updates that refine the initial estimate by a gradient descent scheme in the sense
of Wirtinger calculus iteratively. It is proven that the WF method converges to a true signal at a linear rate for Gaussian random measurements. The authors in \cite{ma2018} proposed a modified Levenberg-Marquardt method with an initial point provably close to the set of the global optimal solutions and established the global linear convergence and local quadratic convergence properties.   Other prior work that took advantage of spectral initialization or its variant to achieve the convergence rate can be found in \cite{chen17,wang17,zhang17,machen2018,MM2018,CFL2018,
lmcc2019,ll2020,gao2020}.

The second group of algorithms for the phase retrieval problems focuses on establishing the global convergence to the true signal starting from arbitrary initial points.  For instance, a two-stage algorithm was proposed in \cite{gao17}, with the first stage aiming to obtain a good initialization for the second stage, where an optimization problem was solved by the Gauss-Newton method.  It was shown in \cite{cai2022} that the gradient descent methods with any random initial point can converge to the true signal and may outperform the state-of-the-art algorithms with spectral initialization in empirical success rate and convergence
speed. Similarly, the authors in \cite{chenmp19}  pointed out that
compared with spectral methods, random initialization is model-agnostic and usually more robust vis-a-vis model mismatch.  Other candidates include the rust-region method \cite{sun18} and alternating minimization/projection methods \cite{zhang2020}.




For the general quadratic measurements model, there is very little work except for an algorithm in \cite{hgd19, hgd20}, which comprises a spectral initialization, followed by iterative gradient descent updates. It was proven that when the number of measurements  is large enough, a global optimal solution can be recovered from complex quadratic measurements with a high probability. Other than that, to the best of our knowledge, no other viable numerical algorithm has been proposed for solving the general quadratic measurements model. Moreover, we note that almost all aforementioned algorithms for phase retrieval problems and the algorithm of general quadratic measurements need a common assumption that requires the entries of the measurement matrices to be i.i.d. Gaussian and noiseless. One of the motivations of this paper is to design a direct second-order method for solving the general quadratic measurements model that starts  from  arbitrary initial points, has good  numerical performance and  its convergence properties do not depend on the assumption on Gaussian measurements.

\subsection{Contributions}
The main contributions of this paper are fourfold.
\begin{itemize}\tightlist
\item[I.] We study the non-asymptotic reconstruction performance of model \eqref{LSE-PQMR} for the original problem \eqref{PQMR}. It is shown that the global minimum $\hat{x}$ to the model  exactly recovers the true signal $x^*$ for the noiseless case and that
\bea{l}\min\{\|\hat{x}-x^*\|,\|\hat{x}+x^*\|\} =O(\sqrt{p\log(1+2n)/n})\nnb\eda
 for the noisy case, see Theorem \ref{nonasym-bound}. The results above are achieved under  Assumption \ref{sample-RIP-general} on $n$ matrices $\{A_1,\ldots , A_n\}$ that is guaranteed with a high probability by setting up those matrices as sub-Gaussian matrices, see Lemma \ref{sample-RIP}. Therefore, our results are obtained by sub-Gaussian measurements rather than Gaussian measurements.
\item[II.] To solve optimization model \eqref{LSE-PQMR}, we design a two-phase algorithm comprising of the gradient descent and regularized Newton methods (GRNM). Under the isometry property-like assumption, the algorithm converges to a stationary point $\bar{x}$ (i.e.,  $\nabla{}f( \bar{x})=0$) of \eqref{LSE-PQMR} from any initial points. Under additional mild conditions, the algorithm will generate a sequence { eventually} converging to a unique local minimum to \eqref{LSE-PQMR} at convergence rate $1+\delta$, where $\delta\in(0,1)$ (see Theorem \ref{convergence-rate}). All of these results are achieved from the deterministic perspective.
\item[III.] We then establish the oracle convergence results for GRNM from the statistical perspective. For  noiseless \eqref{PQMR}, as proven in Theorem \ref{oracle-noiseless},  GRNM is capable of capturing the true signal with   convergence rate of $1+\delta$. While for the noisy \eqref{PQMR}, its generated solution reaches the error rate of $O(\sqrt{p\log(1+2n)/n})$ with a high probability (see Theorem \ref{oracle-conv-noise}) and the convergence is superlinear in probability (see Theorem \ref{oracle-conv-suplinear-noise}).
\item[IV.] It is demonstrated that GRNM has a nice numerical performance in terms of the recovery accuracy and capability. Its computational speed is more favorable  compared with several leading solvers \cite{hgd19, fann20, hrwx21}. In addition, unlike the Newton-type algorithms, our method does not rely on the initial points heavily.
\end{itemize}

\subsection{Paper outline}

The rest of this paper is organized as follows. In the next section, we present some notation and assumptions  and study the properties of the objective function defined in \eqref{LSE-PQMR}. In Section \ref{sec:sta-ana}, we conduct the non-asymptotic statistical analysis to show the reconstruction performance of model \eqref{LSE-PQMR} for the original problem \eqref{PQMR} in both noiseless and noisy cases. The gradient regularized Newton method (GRNM) is developed in  Section \ref{sec:alg} where the convergence analysis from the deterministic perspective is also provided. In Section \ref{sec:oracle-con}, we investigate the oracle convergence of GRNM to see the reconstruction performance for \eqref{PQMR} in both noiseless and noisy cases, followed by a report on numerical experiments in Section \ref{sec:num}. We conclude the paper   in the last section.

\section{Preliminaries}

We start with presenting some notation that will be used throughout the paper. For a scalar $a$, let   $\lfloor a \rfloor$  be the largest integer that is no more than $a$.  Denote  $\|\cdot\|$ the Euclidean norm for a vector and the spectral norm for a matrix, while the Frobenius norm is denoted by $\|\cdot\|_F$.  We write the unit ball as $\B:=\{x\in\R^p:\|x\|\leq1\},$
  the identity matrix as $I$,  and a positive semi-definite (definite) matrix $A$ as $A\succeq0$ ($A\succ0$), respectively.
We denote $ {\lambda}_{\max}(A)  $ and $ {\lambda}_{\min}(A) $ the largest eigenvalue and smallest eigenvalues of matrix $A$. Let
\bea{rcl}
\label{phi-Ax}
\varphi_i(x)&:=& \langle x, A_ix\rangle-b_i,~ i\in[n],\\
{\bf A}(x)&:=&(A_1x,A_2x,\ldots,A_mx)^\top\in\R^{n\times p}.
\eda
Then  the gradient and Hessian of $f$  in \eqref{LSE-PQMR} are as follows
\bea{rcl}
\label{grad-hess}
\nabla f(x)&=&\frac{1}{n} \sum_{i=1}^n\varphi_i(x)A_ix,\\
\nabla^2f(x)&=&\frac{1}{n}\sum_{i=1}^n (2A_ixx^TA_i+ \varphi_i(x)A_i)\\
& =& \frac{2}{n}{\bf A}(x)^T{\bf A}(x) +\frac{1}{n}\sum_{i=1}^n  \varphi_i(x)A_i.
\eda
Several important probabilities are defined as follows.
\bea{lllllll}
p_1&:=&2\exp\left\lbrace -p\log(1+2n)\right\rbrace ,\quad
&p_2&:=&2\exp\left\lbrace -\frac{n \mathbb{E}\varepsilon_1^2}{512\varsigma    ^2} \right\rbrace ,\\
 p_3&:=&2\exp\left\lbrace-\frac{{\lambda}\|x^*\|^4n}{18{\eta}\varsigma    ^2}\right\rbrace,\quad
 &p_4&:=&2\exp\left\lbrace-\frac{n\gamma ^2\sigma ^4}{{ 4096\varrho^4}}\right\rbrace,
\eda
where the meaning of the parameters like $\varsigma,\lambda$ and $\gamma$ will be specified in the context.

\subsection{A blanket assumption and its implications}

The authors in  \cite{wang-xu19} introduced a phase retrieval property  of $n$ matrices $A_1,\ldots ,A_n$  in $\R^{p\times p}$. It was shown that model \eqref{PQMR} in the noiseless case (i.e., $\varepsilon_i=0$) and $n\geq2p-1$  has a unique solution up to a global sign.
To discuss the performance of element sample, we need the following  isometry property-like assumption, which is an extension of the frame defined in \cite{balan}.
\begin{assumption}\label{sample-RIP-general}
There exist two positive constants ${\lambda}$ and ${\eta}$ such that
\bea{ll}
{\lambda}\|u\| ^2\|v\| ^2\leq{}\frac{1}{n}\sum_{i=1}^n\langle u,A_iv \rangle ^2\leq{}{\eta}\|u\| ^2\|v\| ^2 \qquad \text{ for {all} }~u,v\in\mathbb{R}^p.
\nnb
\eda
\end{assumption}
{\noindent To show the existence of $A_i$ that satisfies the above assumption, we first introduce the definitions of sub-Gaussian variables and matrices \cite{RH15}. A random variable $X$ is said to be sub-Gaussian with variance proxy $\varrho^2$ if $\mathbb{E}X=0$ and its moment generating function satisfies $\mathbb{E}\exp(sX)\leq\exp(\varrho^2s^2/2)$ for any $s\in\mathbb{R}$.
A random matrix $X$ is said to be  sub-Gaussian with  proxy variance $\varrho^2$ if $\mathbb{E}X=0$ and $\langle u,Xv\rangle$ is sub-Gaussian with proxy variance $\varrho^2$ for any unit vectors $u$ and $v$. We use these sub-Gaussian matrices to construct $A_i$ as follows.}
\begin{setup}\label{ass-A-i}
Let $\{B_1,\ldots,B_n\}$ be a set of $p\times{}p$ sub-Gaussian random matrices with positive proxy $\varrho  ^2$ and assume that entries of each
$B_i$ are independent identical distributed with mean zero and positive variance $\sigma ^2$. Let $A_i=\big(B_i+B_i^T\big)/2$ for every
$i\in[n].$
\end{setup}
{ According to the definition of sub-Gaussian matrices, one can see that all entries of 
$B_i$ are sub-Gaussian with proxy variance no larger than $\varrho  ^2$  if $B_i$ is sub-Gaussian.} The above setup allows us to derive some useful properties as follows.
\begin{lemma}\label{sample-prop} If $\{A_1,\ldots,A_n\}$ are given as  Setup \ref{ass-A-i}, then the following statements are true.
\begin{itemize}[leftmargin=20pt]
\item[i)] Every $ A_i$ is a $p\times{}p$ symmetric sub-Gaussian random matrix with variance proxy $\varrho  ^2$. 
\item[ii)]  $\mathbb{E}(\langle u, A_iv\rangle^2)=\frac{1}{2}\sigma ^2(\|u\|^2\|v\|^2+\langle u,v\rangle^2)$ and hence
\bea{l}\frac{1}{2}\sigma ^2\|u\|^2\|v\|^2\leq\mathbb{E}(\langle u, A_iv\rangle^2)\leq\sigma ^2\|u\|^2\|v\|^2,\nnb
\eda
\item[iii)]  Every $\langle u, A_iv\rangle^2-\mathbb{E}\langle u, A_iv\rangle^2$ is a sub-exponential random variable with parameter $16\varrho  ^2\|u\|^2\|v\|^2$.
\end{itemize}
\end{lemma}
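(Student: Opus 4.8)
The plan is to treat the three parts in order: parts i) and iii) reduce to standard facts about Sub-Gaussian variables once the scalar projection $\langle u, A_i v\rangle$ is understood, while part ii) is a direct second-moment computation. For part i), symmetry of $A_i$ is immediate from $A_i=(B_i+B_i^T)/2$, so the real work is the Sub-Gaussian property. I would pass to scalar projections: for any $u,v\in\R^p$,
\[
\langle u, A_i v\rangle=\tfrac12\big(\langle u, B_i v\rangle+\langle v, B_i u\rangle\big).
\]
Each summand is Sub-Gaussian with proxy $\varrho^2\|u\|^2\|v\|^2$ because $B_i$ has proxy $\varrho^2$; the delicate point is that the two summands are not independent, so their proxies cannot simply be added. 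I would instead bound the moment generating function of the average by the Cauchy--Schwarz inequality,
\[
\mathbb{E}\,e^{t\langle u, A_i v\rangle}=\mathbb{E}\big[e^{\frac{t}{2}\langle u, B_i v\rangle}e^{\frac{t}{2}\langle v, B_i u\rangle}\big]\le\Big(\mathbb{E}\,e^{t\langle u, B_i v\rangle}\Big)^{1/2}\Big(\mathbb{E}\,e^{t\langle v, B_i u\rangle}\Big)^{1/2},
\]
whose right-hand side is at most $\exp\!\big(t^2\varrho^2\|u\|^2\|v\|^2/2\big)$. Hence $\langle u, A_i v\rangle$ retains the proxy $\varrho^2$, which is exactly the claim.

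For part ii), I would expand the projection entrywise. Writing $b_{kl}=(B_i)_{kl}$ and relabelling one summation index, a short manipulation gives $\langle u, A_i v\rangle=\sum_{k,l}c_{kl}b_{kl}$ with $c_{kl}=\tfrac12(u_kv_l+u_lv_k)$. Since the entries $b_{kl}$ are independent with mean zero and variance $\sigma^2$, all cross terms vanish and
\[
\mathbb{E}\langle u, A_i v\rangle^2=\sigma^2\sum_{k,l}c_{kl}^2=\frac{\sigma^2}{4}\sum_{k,l}(u_kv_l+u_lv_k)^2.
\]
Expanding the square and summing produces $\|u\|^2\|v\|^2$ twice and $2\langle u,v\rangle^2$, yielding the stated identity $\tfrac12\sigma^2(\|u\|^2\|v\|^2+\langle u,v\rangle^2)$; the two-sided bound then follows from $0\le\langle u,v\rangle^2\le\|u\|^2\|v\|^2$.

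For part iii), I would invoke the standard fact that the centered square of a Sub-Gaussian variable is sub-exponential. By part i), $\langle u, A_i v\rangle$ is Sub-Gaussian with proxy $\tau^2:=\varrho^2\|u\|^2\|v\|^2$, and the known result that $X^2-\mathbb{E}X^2$ is sub-exponential with parameter $16\tau^2$ whenever $X$ is Sub-Gaussian with proxy $\tau^2$ immediately gives the parameter $16\varrho^2\|u\|^2\|v\|^2$. I expect the main obstacle to be the dependence issue in part i) --- making the Cauchy--Schwarz argument rigorous so that the symmetrization does not inflate the proxy --- together with matching the constant $16$ in part iii) to the precise definition of the sub-exponential parameter adopted in the paper.
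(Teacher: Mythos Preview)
Your proof is correct and takes a genuinely different route from the paper in parts i) and ii). The paper works directly with the entries of $A_i$: it observes that the diagonal entries $a_{kk}^i=b_{kk}^i$ carry proxy $\varrho^2$ while the off-diagonal entries $a_{kl}^i=(b_{kl}^i+b_{lk}^i)/2$ carry proxy $\varrho^2/2$, and that the upper-triangular entries $\{a_{kl}^i:k\le l\}$ are mutually independent; it then expands $\langle u,A_iv\rangle$ over this independent family and bounds the moment generating function factor by factor. Similarly, part ii) in the paper computes second moments using the two different variances $\sigma^2$ (diagonal) and $\sigma^2/2$ (off-diagonal) of the entries of $A_i$. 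Your approach bypasses this case analysis by going back to $B_i$: in i) you write $\langle u,A_iv\rangle$ as the average of two dependent Sub-Gaussians and use Cauchy--Schwarz on the MGF, and in ii) you express it as a single linear form in the full i.i.d.\ family $\{b_{kl}\}$, so all coefficients enter uniformly. The paper's argument makes the independence structure of $A_i$ itself explicit, which is conceptually informative; your argument is shorter and avoids distinguishing diagonal from off-diagonal terms, at the cost of relying on the Cauchy--Schwarz trick to handle the dependence between $\langle u,B_iv\rangle$ and $\langle v,B_iu\rangle$. Part iii) is handled identically in both: the paper cites \cite[Lemma~1.12]{RH15}, which is exactly the ``centered square of a Sub-Gaussian is sub-exponential with parameter $16\tau^2$'' fact you invoke.
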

\begin{proof}
i) It is easy to check  any diagonal entry of $A_i, i=1,\ldots,n$ is sub-Gaussian random variable with proxy $\varrho  ^2$ and any off-diagonal entry is sub-Gaussian random variable with proxy $\varrho  ^2/2$. Denote the $(k,l)$th element of $A_i$ by $a_{kl}^i$. Direct calculation yields that \bea{ll}\label{uAv}
\langle u, A_iv\rangle=\sum_{i=1}^na_{kk}^iu_kv_k+\sum_{k=1}^{n-1}\sum_{l=k+1}^na_{kl}^i(u_kv_l+u_lv_k).
\eda
For any $s\in\mathbb{R}$ and { unit vectors $u, v\in\mathbb{R}^p$}, we conclude from the above equality and the independence of $a_{kl}^i$ for $k\leq{}l$ that
\bea{lll}
\mathbb{E}(\exp(s\langle u, A_iv\rangle))&=&\prod_{k=1}^p\mathbb{E}(\exp(sa_{kk}^iu_kv_k))
\prod_{k=1}^{p-1}\prod_{l=k+1}^{p}\mathbb{E}\big(\exp(sa_{kl}^i(u_kv_l+u_lv_k))\big)\\
&\leq&\prod_{k=1}^p\exp(s^2\varrho  ^2u_k^2v_k^2/2)\prod_{k=1}^{p-1}\prod_{l=k+1}^{p}\exp(s^2\varrho  ^2(u_kv_l+u_lv_k)^2/4)
\\
&\leq&\prod_{k=1}^p\exp(s^2\varrho  ^2u_k^2v_k^2/2)\prod_{k=1}^{p-1}\prod_{l=k+1}^{p}\exp(s^2\varrho  ^2((u_kv_l)^2+(u_lv_k)^2)/2)\\
&=&\exp(s^2\varrho  ^2\|u\|^2\|v\|^2/2),\nnb
\eda
where the first inequality derives from the fact that the diagonal entry and the off-diagonal entry of $A_i$ are the sub-Gaussian random variable with proxy $\varrho  ^2$ and  $\varrho  ^2/2$. This above condition  means $\langle u, A_iv\rangle$ is sub-Gaussian.

ii) We note that $\mathbb{E}(a_{kk}^i)^2=\sigma ^2$ and $\mathbb{E}(a_{kl}^i)^2=\sigma ^2/2$ for $k\neq{}l$.
By (\ref{uAv}), the independence of $a_{kl}^i$ for $k\leq{}l$ and $\mathbb{E}(A_i)=0$, we have
\bea{lll}\mathbb{E}(\langle u, A_iv\rangle)^2&=&\sum_{k=1}^n\mathbb{E}(a_{kk}^i)^2u_i^2v_i^2+\sum_{k=1}^{n-1}\sum_{l=k+1}^n\mathbb{E}(a_{kl}^i)^2(u_kv_l+u_lv_k)^2\\
&=&\sum_{k=1}^n\sigma ^2u_k^2v_k^2+\frac{1}{2}\sum_{k=1}^{n-1}\sum_{l=k+1}^n\sigma ^2(u_kv_l+u_lv_k)^2\\
&=&\frac{1}{2}\sigma ^2\|u\|^2\|v\|^2+\frac{1}{2}\sigma ^2(\sum_{i=1}^nu_k^2v_k^2+\sum_{k=1}^{n-1}\sum_{l=k+1}^n2u_kv_lu_lv_k)\\
&=&\frac{1}{2}\sigma ^2(\|u\|^2\|v\|^2+\langle u,v\rangle^2).
\nnb\eda
iii) The third result is a direct consequence of Lemma 1.12 of \cite{RH15}.\end{proof}

  Under Setup \ref{ass-A-i},  Assumption \ref{sample-RIP-general} can be guaranteed with a high probability, thereby displaying  the rationality of such an assumption, as outlined below.

{
\begin{lemma}\label{sample-RIP}  Let $\{A_1,\ldots,A_n\}$ be given as  Setup \ref{ass-A-i} and $\gamma\in(0,1/3)$.  If \bea{c}\label{np-RIP}n\geq\frac{4096\log(36\sqrt{2}/\gamma)\varrho^4}{\gamma^2\sigma^4}(2p+1),\eda then with a probability at least $1-p_4 $ there is
\bea{c}\label{subgauss-RIP}
\frac{1-2\gamma}{2}\sigma ^2\|u\|^2\|v\|^2\leq\frac{1}{n}\sum_{i=1}^n \left\langle u ,A_iv \right\rangle^2\leq\frac{2+3\gamma}{2}\sigma ^2\|u\|^2\|v\|^2~~~\mbox{for all}~u,v\in\R^p.
\eda
Furthermore, with the same probability,
\bea{c}\label{relation-3-ass}
\text{Setup \ref{ass-A-i}}~ \Longrightarrow ~~\text{Assumption \ref{sample-RIP-general}}.
\eda
\end{lemma}
\begin{proof} From Lemma \ref{sample-prop}  i) and iii), one conclude that $\left\langle u ,A_iv \right\rangle$ is a sub-Gaussian random variable with variance proxy $\varrho  ^2\|u\|^2\|v\|^2$ for any  nonzero vectors $u,v\in\mathbb{R}^p$ and  $\left\langle u ,A_iv \right\rangle^2-\mathbb{E}\left\langle u ,A_iv \right\rangle^2$ is sub-exponential random variable with parameter $16\varrho  ^2\|u\|^2\|v\|^2$. The independence of $\{A_1,\ldots,A_n\}$ and Bernstein's inequality enable us to obtain
\bea{lll}
~~~~\mathbb{P}\left(\left|\frac{1}{n}\sum_{i=1}^n \big(\left\langle u ,A_iv \right\rangle^2-\mathbb{E}\left\langle u ,A_iv \right\rangle^2\big)\right|>t\right) &\leq&
2\exp\left\lbrace  -\frac{nt}{32\|u\|^2\|v\|^2\varrho  ^2}\min\Big\{1,\frac{t}{16\|u\|^2\|v\|^2\varrho  ^2}\Big\}\right\rbrace
 \nnb
\eda
for any $t>0$. By choosing  $t=0.5\gamma\sigma ^2\|u\|^2\|v\|^2$, we conclude from the above  inequality, 
 Lemma \ref{sample-prop} ii), and $\sigma ^2\leq\varrho  ^2$ that
for any $u,v\in\R^p$,
\bea{c}
\mathbb{P}\Big(\Big\{\frac{1}{n}\sum_{i=1}^n \langle u ,A_iv\rangle^2<\frac{(1-\gamma)\sigma ^2\|u\|^2\|v\|^2}{2}\Big\}\bigcup\Big\{\frac{1}{n}\sum_{i=1}^n \langle u ,A_iv \rangle^2>\frac{(2+\gamma)\sigma ^2\|u\|^2\|v\|^2}{2}\Big\}\Big)\leq2\exp\left\lbrace-\frac{n\gamma^2 \sigma ^4}{2048\varrho  ^4}\right\rbrace.\nnb
\eda
 To ease of presentation, we denote $W:=vu^T$ and $\langle\cdot,\cdot\rangle$ the standard inner product of two square matrices. Moreover, we define $$S:=\{W\in\mathbb{R}^{p\times p}: \mathrm{rank}(W)=1, \|W\|_F=1\}.$$ Therefore, the above condition derives that for any $W\in S$,
\bea{c}\mathbb{P}\Big(\Big\{\frac{1}{n}\sum_{i=1}^n \left\langle A_i,W\right\rangle^2< \frac{1-\gamma}{2}\sigma^2\Big\}
\bigcup\Big\{\frac{1}{n}\sum_{i=1}^n \left\langle A_i,W\right\rangle^2> \frac{2+\gamma}{2}\sigma^2\Big\}\Big)\leq2\exp\left\lbrace-\frac{n\gamma^2 \sigma ^4}{2048\varrho  ^4}\right\rbrace,\nnb\eda
due to $\|W\|_F^2=\|u\|^2\|v\|^2$. We note that to prove (\ref{subgauss-RIP}), it suffices to prove 
\bea{c}\label{matrix-RIP2}\inf_{W\in S}\frac{1}{n}\sum_{i=1}^n \left\langle {A}_i,W\right\rangle^2\geq \frac{1-2\gamma}{2}\sigma^2
~\mbox{and}~ \sup_{W\in S}\frac{1}{n}\sum_{i=1}^n \left\langle {A}_i,W\right\rangle^2\leq \frac{2+3\gamma}{2}\sigma^2
\eda
with a probability at least $1-p_4 $.
We can prove the result using the similar reasoning to prove Theorem 2.3 in \cite{candes11}.  To keep the paper self-contained, we give the details. According to Lemma 3.1 in \cite{candes11}, there exists an $\epsilon$-net $\tilde{S}_\epsilon$ obeying
$$|\tilde{S}_\epsilon|\leq(9/\epsilon)^{2p+1}\qquad\text{and}\qquad \tilde{S}_\epsilon\supseteq S.$$  Here $|\tilde{S}_\epsilon|$ represents the cardinality of $\tilde{S}_\epsilon$. By taking $\epsilon=\gamma/(4\sqrt{2})$, we have
\bea{lll}&&\mathbb{P}\Big(\Big\{\inf_{\widetilde{W}\in \bar{S}_\epsilon}\frac{1}{n}\sum_{i=1}^n \langle A_i,\widetilde{W}\rangle^2< \frac{1-\gamma}{2}\sigma^2\Big\}
\bigcup\Big\{\sup_{\widetilde{W}\in \bar{S}_\epsilon}\frac{1}{n}\sum_{i=1}^n \langle A_i,\widetilde{W}\rangle^2> \frac{2+\gamma}{2}\sigma^2\Big\}\Big)\\
&\leq&\mathbb{P}\Big(\Big(\bigcup_{\widetilde{W}\in \bar{S}_\epsilon}\Big\{\frac{1}{n}\sum_{i=1}^n \langle A_i,\widetilde{W}\rangle^2< \frac{1-\gamma}{2}\sigma^2\Big\}\Big)
\bigcup\Big(\bigcup_{\widetilde{W}\in \bar{S}_\epsilon}\Big\{\frac{1}{n}\sum_{i=1}^n \langle A_i,\widetilde{W}\rangle^2> \frac{2+\gamma}{2}\sigma^2\Big\}\Big)\Big)\\
&\leq&2|\bar{S}_\epsilon|\exp\left\lbrace-\frac{n\gamma^2 \sigma ^4}{2048\varrho  ^4}\right\rbrace\\
&\leq&2\exp\left\lbrace-\frac{n\gamma^2 \sigma ^4}{2048\varrho  ^4}+(2p+1)\log(36\sqrt{2}/\gamma)\right\rbrace\\
&\leq&2\exp\left\lbrace-\frac{n\gamma^2 \sigma ^4}{4096\varrho  ^4}\right\rbrace=p_4. \qquad \text{(due to (\ref{np-RIP}))}
\nnb
\eda
 Therefore, with a probability at least $1-p_4 $, we have 
\bea{c}\inf_{\widetilde{W}\in \bar{S}_\epsilon}\frac{1}{n}\sum_{i=1}^n \langle A_i,\widetilde{W}\rangle^2\geq \frac{1-\gamma}{2}\sigma^2\quad\mbox{and}\quad
\sup_{\widetilde{W}\in \bar{S}_\epsilon}\frac{1}{n}\sum_{i=1}^n \langle A_i,\widetilde{W}\rangle^2\leq \frac{2+\gamma}{2}\sigma^2.\nnb
\eda
Let $\vartheta:=\sup_{W\in S}\sqrt{\frac{1}{n}\sum_{i=1}^n \left\langle {A}_i,W \right\rangle^2}.$ 
For any $W\in S$, there exists $\widetilde{W}\in\widetilde{S}_\epsilon$ satisfying $\|W-\widetilde{W}\|_F\leq\epsilon$. Using such $\widetilde{W}$ and  the triangle inequality, we can obtain 
\bea{lll}\label{kappa-ub}\sqrt{\frac{1}{n}\sum_{i=1}^n \langle {A}_i,W \rangle^2}&\leq&\sqrt{\frac{1}{n}\sum_{i=1}^n \langle {A}_i,\widetilde{W} \rangle^2}+\sqrt{\frac{1}{n}\sum_{i=1}^n \langle {A}_i,W-\widetilde{W} \rangle^2}\\
&\leq&\sigma\sqrt{1+\gamma/2}+\sqrt{\frac{1}{n}\sum_{i=1}^n \langle {A}_i,W-\widetilde{W}\rangle^2}.\eda
Since $\mathrm{rank}(W-\widetilde{W})\leq2$, one can use the singular value decomposition to get $W-\widetilde{W}=W_1+W_2$ with $\mathrm{rank}(W_1)=1,~ \mathrm{rank}(W_2)=1$, and
$\langle W_1,W_2\rangle=0$. This leads to
\bea{lll}\sqrt{\frac{1}{n}\sum_{i=1}^n \langle {A}_i,W-\widetilde{W} \rangle^2}&\leq&\sqrt{\frac{1}{n}\sum_{i=1}^n \left\langle {A}_i,W_1 \right\rangle^2}+\sqrt{\frac{1}{n}\sum_{i=1}^n \left\langle {A}_i,W_2 \right\rangle^2}\\
&\leq&\vartheta(\|W_1\|_F+\|W_2\|_F) \leq \sqrt{2}\vartheta \sqrt{\|W_1\|_F^2+\|W_2\|_F^2} \\
&=&  \sqrt{2}\vartheta \|W-\widetilde{W}\|_F \leq \sqrt{2}\vartheta\epsilon=\vartheta\gamma/4,\nnb\eda 
where the second inequality is due to $W_1/\|W_1\|_F,  W_2/\|W_2\|_F\in S$.
Plugging this into (\ref{kappa-ub}) gives
$$\mbox{$\sqrt{\frac{1}{n}\sum_{i=1}^n \left\langle {A}_i,W \right\rangle^2}$}\leq\sigma\sqrt{1+\gamma/2}+\vartheta\gamma/4.$$
Since this holds for all $W\in S$, it follows $\vartheta\leq\sigma\sqrt{1+\gamma/2}+\vartheta\gamma/4$, thereby 
$\vartheta\leq\sigma(1-\gamma/4)^{-1}\sqrt{1+\gamma/2}$.   One can check that $\sqrt{1+\gamma/2}\leq(1-\gamma/4)\sqrt{1+3\gamma/2}$ by $\gamma\in(0,1/3)$. Then $\vartheta\leq\sigma\sqrt{1+3\gamma/2}$, showing
 the upper bound in \eqref{matrix-RIP2}. Using again the triangle inequality,  we have
\bea{lll}\sqrt{\frac{1}{n}\sum_{i=1}^n \left\langle {A}_i,W \right\rangle^2} &\geq& \sqrt{\frac{1}{n}\sum_{i=1}^n  \langle {A}_i,\widetilde{W}  \rangle^2} - \sqrt{\frac{1}{n}\sum_{i=1}^n  \langle {A}_i,W-\widetilde{W}  \rangle^2} \\
&\geq&\sqrt{\frac{1-\gamma}{2}}\sigma- \frac{\vartheta\gamma}{4}\geq \sqrt{\frac{1-\gamma}{2}}\sigma-\frac{\gamma\sqrt{1+\gamma/2}}{4-\gamma}\sigma\\
&\geq&\sqrt{\frac{1-2\gamma}{2}}\sigma, \nnb\eda 
delivering the lower bound in \eqref{matrix-RIP2} as the above condition holds for all $W\in S$. Finally,  \eqref{relation-3-ass} can be ensured by setting ${\eta}=(2+3\gamma )\sigma^2/2$ and ${\lambda}=(1-2\gamma)\sigma^2/2$.
\end{proof}
\begin{remark}\label{remark-RIP} We emphasize that similar inequalities  to (\ref{subgauss-RIP}) can be guaranteed if $n\geq C p$ for a sufficiently large positive constant $C$, such as  \cite{machen2018}  for phase retrieval problems with real Gaussian samples  and \cite{Thaker} for QMR with complex Hermitian Gaussian random matrices.
\end{remark}
}

\subsection{Properties of $f$}
We  next study  the properties of $f$ under Assumption \ref{sample-RIP-general}. Those properties are beneficial to establish the convergence results for the algorithm proposed in the sequel.
\begin{lemma}\label{obj-coer1} If Assumption \ref{sample-RIP-general} holds, then $f(\cdot)$ is coercive, that is, $\lim_{\|x\|\to\infty} f( x )  =\infty.$
Hence, for any given $t\in\mathbb{R}$, the following level set of $f$ is compact,
\begin{eqnarray}\label{level set} L(t):=\{x\in\mathbb{R}^p: f(x)\leq{}t\}.\end{eqnarray}
\end{lemma}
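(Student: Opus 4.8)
The plan is to exploit the two-sided bound in Assumption \ref{sample-RIP-general} directly, specialized to $u=v=x$. Expanding the square in the objective gives
\begin{equation}
f(x)=\frac{1}{4n}\sum_{i=1}^n\langle x,A_ix\rangle^2-\frac{1}{2n}\sum_{i=1}^n b_i\langle x,A_ix\rangle+\frac{1}{4n}\sum_{i=1}^n b_i^2,\nonumber
\end{equation}
so the strategy is to show that the leading (quartic) term dominates the cross (quadratic) term as $\|x\|\to\infty$. Taking $u=v=x$ in the assumption yields $\underline{\lambda}\|x\|^4\leq\frac1n\sum_{i=1}^n\langle x,A_ix\rangle^2\leq\bar{\lambda}\|x\|^4$, which simultaneously supplies a lower bound on the first term and an upper bound that will be used to tame the cross term.

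For the cross term I would apply the Cauchy--Schwarz inequality and then the upper bound above:
\begin{equation}
\Big|\sum_{i=1}^n b_i\langle x,A_ix\rangle\Big|\leq\|b\|\Big(\sum_{i=1}^n\langle x,A_ix\rangle^2\Big)^{1/2}\leq\|b\|\sqrt{n\bar{\lambda}}\,\|x\|^2.\nonumber
\end{equation}
Discarding the nonnegative term $\tfrac1{4n}\|b\|^2$ and combining the two estimates gives the clean lower bound
\begin{equation}
f(x)\geq\frac{\underline{\lambda}}{4}\|x\|^4-\frac{\|b\|\sqrt{\bar{\lambda}}}{2\sqrt{n}}\,\|x\|^2,\nonumber
\end{equation}
and since $\underline{\lambda}>0$, the right-hand side is a quartic in $\|x\|$ with positive leading coefficient, hence tends to $+\infty$ as $\|x\|\to\infty$. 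This establishes coercivity.

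For the compactness of $L(t)$, I would argue that $f$ is a polynomial in $x$ and therefore continuous, so $L(t)=f^{-1}((-\infty,t])$ is closed as the preimage of a closed set. Coercivity implies that for each fixed $t$ there is a radius $R$ with $f(x)>t$ whenever $\|x\|>R$, so $L(t)\subseteq\{x:\|x\|\leq R\}$ is bounded. A closed and bounded subset of $\mathbb{R}^p$ is compact by the Heine--Borel theorem, completing the proof. No step here is a genuine obstacle; the only point requiring a little care is bounding the cross term by the correct power of $\|x\|$, for which the upper half of Assumption \ref{sample-RIP-general} (rather than just the lower half) is essential so that the quadratic growth cannot overtake the quartic term.
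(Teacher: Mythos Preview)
Your proof is correct and follows essentially the same approach as the paper: expand the square, bound the quartic term below via the lower inequality in Assumption~\ref{sample-RIP-general} with $u=v=x$, and control the cross term by Cauchy--Schwarz combined with the upper inequality, arriving at the same quartic-minus-quadratic lower bound (the paper merely retains the nonnegative constant $\tfrac{1}{4n}\|b\|^2$ that you discard). The compactness argument via continuity plus coercivity is likewise the same.
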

\begin{proof}
By Cauchy's inequality and Assumption \ref{sample-RIP-general}, one can see that
\bea{lll}
f(x)=\frac{1}{4n}(\sum_{i=1}^n\langle x, A_ix\rangle^2-2\sum_{i=1}^n\langle x, A_ix\rangle b_i + \|b\|^2)
\geq\frac{{\lambda}}{4}\|x\|^4-\frac{\sqrt{{\eta}}}{2\sqrt{n}}\|x\|^2\|b\|+\frac{1}{4n}\|b\|^2,
\nnb\eda
which implies $f(x)\to\infty$ as $\|x\|\to\infty$. Because of this, $L(t)$ is bounded for any given $t$, which together with the continuity of $f(\cdot)$ yields the compactness.
\end{proof}
\noindent Assumption \ref{sample-RIP-general} enables us to bound the gradient and Hessian of $f$  on a given compact region.
 \begin{lemma}\label{fgh-bound}
 If Assumption \ref{sample-RIP-general} holds,
 then the following two terms
\bea{c}\label{bound-two-terms}
\sup_{\|x\|\leq t}\|\nabla{}f(x)\| \qquad \mbox{and}\qquad\sup_{\|x\|\leq t}\|\nabla^2f(x)\|
\eda
are bounded for any given  $t<+\infty$.
\end{lemma}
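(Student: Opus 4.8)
The plan is to read explicit bounds directly off the closed forms of $\nabla f$ and $\nabla^2 f$ in \eqref{grad-hess}, using only the upper isometry constant $\bar{\lambda}$ from Assumption \ref{sample-RIP-general} together with a Cauchy--Schwarz step over the measurement index $i$. Throughout I fix $t<+\infty$ and restrict to $\|x\|\leq t$, and I first record an a priori bound on the objective itself. Since $\sum_{i=1}^n \varphi_i(x)^2 = 4nf(x)$, applying $(a-b)^2\leq 2a^2+2b^2$ and then the upper bound of Assumption \ref{sample-RIP-general} with $u=v=x$ gives $4nf(x)\leq 2\sum_i\langle x,A_ix\rangle^2 + 2\|b\|^2 \leq 2n\bar{\lambda}\|x\|^4 + 2\|b\|^2$, so that $\sup_{\|x\|\le t} f(x)$ and $\sup_{\|x\|\le t}\sum_i\varphi_i(x)^2$ are finite with explicit constants in $\bar{\lambda}, t, \|b\|, n$.

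For the gradient I would use the dual characterization $\|\nabla f(x)\| = \sup_{\|w\|=1}\langle w,\nabla f(x)\rangle = \sup_{\|w\|=1}\frac1n\sum_i\varphi_i(x)\langle w,A_ix\rangle$. Cauchy--Schwarz over $i$ bounds the inner sum by $\frac1n(\sum_i\varphi_i(x)^2)^{1/2}(\sum_i\langle w,A_ix\rangle^2)^{1/2}$. The first factor equals $\sqrt{4nf(x)}$, already controlled above; the second is at most $\sqrt{n\bar{\lambda}}\,\|x\|$ by Assumption \ref{sample-RIP-general} with $u=w$, $v=x$ and $\|w\|=1$. Multiplying the two and using $\|x\|\le t$ yields a finite bound uniform over the ball, so the first supremum in \eqref{bound-two-terms} is finite.

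For the Hessian I would use the split $\nabla^2 f(x) = \frac2n{\bf A}(x)^T{\bf A}(x) + \frac1n\sum_i\varphi_i(x)A_i$ and estimate the two pieces separately in spectral norm. For the first piece, $\|{\bf A}(x)^T{\bf A}(x)\| = \sup_{\|w\|=1}\|{\bf A}(x)w\|^2 = \sup_{\|w\|=1}\sum_i\langle w,A_ix\rangle^2 \le n\bar{\lambda}\|x\|^2$ directly by Assumption \ref{sample-RIP-general}, giving the bound $2\bar{\lambda}t^2$. For the second piece I would pass to the bilinear form $\|\sum_i\varphi_i(x)A_i\| = \sup_{\|w\|=\|z\|=1}\sum_i\varphi_i(x)\langle w,A_iz\rangle$ and apply Cauchy--Schwarz over $i$ as before, now bounding $\sum_i\langle w,A_iz\rangle^2\le n\bar{\lambda}$; with $\sum_i\varphi_i(x)^2\le 4nf(x)$ this gives $\frac1n\|\sum_i\varphi_i(x)A_i\|\le 2\sqrt{\bar{\lambda}f(x)}$, which is finite on the ball by the a priori bound. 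Summing the two pieces proves the second supremum is finite.

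The computations are entirely routine, so there is no serious obstacle; the only thing to get right is the systematic use of Cauchy--Schwarz over $i$ so that $\bar{\lambda}$ can be applied to each $\sum_i\langle w,A_iv\rangle^2$ factor. Indeed, since $f$ is a degree-four polynomial, $\nabla f$ and $\nabla^2 f$ are continuous and the two suprema over the compact ball $\{x:\|x\|\le t\}$ are automatically attained and finite by the extreme value theorem; Assumption \ref{sample-RIP-general} is invoked not for mere finiteness but to furnish the explicit $\bar{\lambda}$- and $t$-dependent constants that the later convergence analysis relies upon.
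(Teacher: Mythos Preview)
Your proposal is correct and follows essentially the same route as the paper: dual characterization of the norm, Cauchy--Schwarz over the index $i$, and Assumption~\ref{sample-RIP-general} to bound each $\sum_i\langle w,A_iv\rangle^2$ factor, yielding the same bounds $2\sqrt{\bar\lambda f(x)}\,\|x\|$ and $2\bar\lambda\|x\|^2+2\sqrt{\bar\lambda f(x)}$. The only cosmetic differences are that you make the boundedness of $\sup_{\|x\|\le t}f(x)$ explicit up front (the paper leaves it implicit) and that you use the bilinear form $\sup_{\|w\|=\|z\|=1}$ rather than the quadratic form for the second Hessian piece; since each $A_i$ is symmetric both give the spectral norm.
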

\begin{proof}
By (\ref{grad-hess}), Cauchy's inequality and Assumption \ref{sample-RIP-general}, one can see that
for any $u\in\mathbb{R}^p$,
\bea{lll}
|\langle u, \nabla{}f(x)\rangle| &\leq&\frac{1}{n}\sum_{i=1}^n |\varphi_i(x)||\langle u, A_ix\rangle|\\
&\leq&\sqrt{ \frac{1}{n}\sum_{i=1}^n\varphi^2_i(x)} \sqrt{\frac{1}{n}\sum_{i=1}^n\langle u, A_ix\rangle^2}  \leq  \sqrt{4{\eta}f(x)}\|u\|\|x\|,
\nnb\eda
which  leads to
\bea{lll}
\sup_{\|x\|\leq t} \|\nabla{}f(x)\| &=&\sup_{\|x\|\leq t} \sup_{\|u\|=1}|\langle u, \nabla{}f(x)\rangle|\\
& \leq& \sup_{\|x\|\leq t}  \sqrt{4{\eta}f(x)}\|x\| \leq   t\sqrt{4{\eta}\sup_{\|x\|\leq t} f(x)}<+\infty.
\nnb\eda
Similarly, one can check that
for any $u\in\mathbb{R}^p$,
\bea{lll}\langle u,\nabla^2f(x)u\rangle &\leq&\frac{2}{n}\sum_{i=1}^n \langle u, A_ix\rangle^2 +\frac{1}{n}\sum_{i=1}^n |\varphi_i(x)\langle u,A_iu\rangle| \\
&\leq&\frac{2}{n}\sum_{i=1}^n \langle u, A_ix\rangle^2 +\sqrt{\frac{1}{n}\sum_{i=1}^n \varphi^2_i(x)}
\sqrt{\frac{1}{n}\sum_{i=1}^n \langle u,A_iu\rangle^2} \\
&\leq&2{\eta}\|u\|^2\|x\|^2 +\sqrt{4{\eta}f(x)}\|u\|,
\nnb\eda
which gives rise to
\bea{lll}
\sup_{\|x\|\leq t} \|\nabla^2f(x)\|&=&\sup_{\|x\|\leq t} \sup_{\|u\|=1}|\langle u, \nabla^2f(x)u\rangle|
\leq\sup_{\|x\|\leq t}\left( 2{\eta}\|x\|^2 +\sqrt{4{\eta}f(x)}\right)\\  &\leq& 2{\eta}t^2 +\sqrt{4{\eta}\sup_{\|x\|\leq t} f(x)}<+\infty,
\nnb\eda
displaying the desired result.
\end{proof}
\noindent The next result reveals the Lipschitz continuity of the gradient and Hessian of $f$.
\begin{lemma}\label{hessian-lip} If Assumption \ref{sample-RIP-general} holds, then for any nonzero vectors $x,y\in\mathbb{R}^p$, it follows
\bea{cll}
\|\nabla^2f(x)-\nabla^2f(y)\|&\leq&3{\eta}\|x+y\|\|x-y\|,\\
\|\nabla f(x)-\nabla f(y)\|&\leq&\|\nabla^2f(y)\|\|x-y\|+2{\eta} \max\{\|x+y\|, \|x-y\|\}\|x-y\|^2 .
\nnb \eda
\end{lemma}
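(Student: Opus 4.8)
The plan is to prove the two estimates in sequence: first the Hessian bound, then bootstrap it to the gradient bound through a first-order Taylor expansion with integral remainder. Since both $\nabla^2 f(x)$ and $\nabla^2 f(y)$ are symmetric, I would compute the operator norm of their difference via the quadratic form, i.e. $\|\nabla^2 f(x)-\nabla^2 f(y)\|=\sup_{\|u\|=1}|\langle u,(\nabla^2 f(x)-\nabla^2 f(y))u\rangle|$, which reduces everything to scalar sums of the type that Assumption \ref{sample-RIP-general} controls directly.

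For the Hessian estimate, using \eqref{grad-hess} I would split the difference into a ``quadratic'' term $\frac{2}{n}\sum_i A_i(xx^T-yy^T)A_i$ and a ``linear'' term $\frac{1}{n}\sum_i(\varphi_i(x)-\varphi_i(y))A_i$. Testing against a unit vector $u$, the quadratic term contributes $\frac{2}{n}\sum_i(\langle u,A_ix\rangle^2-\langle u,A_iy\rangle^2)$, and the key algebraic step is the factorization $\langle u,A_ix\rangle^2-\langle u,A_iy\rangle^2=\langle u,A_i(x-y)\rangle\,\langle u,A_i(x+y)\rangle$; applying Cauchy--Schwarz across $i$ and then the upper bound of Assumption \ref{sample-RIP-general} to each factor yields $2\bar\lambda\|x+y\|\|x-y\|\leq 2\bar\lambda(\|x\|+\|y\|)\|x-y\|$. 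For the linear term I would use the symmetry of $A_i$ to write $\varphi_i(x)-\varphi_i(y)=\langle x,A_ix\rangle-\langle y,A_iy\rangle=\langle x+y,A_i(x-y)\rangle$, and again Cauchy--Schwarz together with Assumption \ref{sample-RIP-general} (bounding $\frac{1}{n}\sum_i\langle u,A_iu\rangle^2\leq\bar\lambda\|u\|^4$) gives $\bar\lambda(\|x\|+\|y\|)\|x-y\|$. Summing the two contributions produces the constant $3\bar\lambda$.

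For the gradient estimate, I would invoke the integral form of the remainder, $\nabla f(x)-\nabla f(y)-\nabla^2 f(y)(x-y)=\int_0^1[\nabla^2 f(y+t(x-y))-\nabla^2 f(y)](x-y)\,dt$, take norms inside the integral, and apply the Hessian bound just established with the arguments $y+t(x-y)$ and $y$. Bounding the convex combination by $\|y+t(x-y)\|\leq(1-t)\|y\|+t\|x\|$ and integrating the resulting polynomial in $t$ leaves a residual of order $\bar\lambda(2\|y\|+\|x\|)\|x-y\|^2\leq 2\bar\lambda(\|x\|+\|y\|)\|x-y\|^2$; the triangle inequality $\|\nabla f(x)-\nabla f(y)\|\leq\|\nabla^2 f(y)\|\|x-y\|+\text{(residual)}$ then delivers the claim.

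The routine parts are the Cauchy--Schwarz estimates combined with Assumption \ref{sample-RIP-general}; the two points requiring care are the quadratic identities $\langle u,A_ix\rangle^2-\langle u,A_iy\rangle^2=\langle u,A_i(x-y)\rangle\langle u,A_i(x+y)\rangle$ and $\langle x,A_ix\rangle-\langle y,A_iy\rangle=\langle x+y,A_i(x-y)\rangle$ (both relying on the symmetry of $A_i$) that linearize the differences, and the bookkeeping of the integral constant. For the latter one must verify that the factor $2\|y\|+\|x\|$ produced by the weights $\int_0^1(2-t)t\,dt=\tfrac23$ and $\int_0^1 t^2\,dt=\tfrac13$ is dominated by $2(\|x\|+\|y\|)$, so that the clean constant $2\bar\lambda$ is obtained; I would therefore keep the tighter convex-combination bound $\|y+t(x-y)\|\leq(1-t)\|y\|+t\|x\|$ rather than the cruder $\|y+t(x-y)\|\leq\|x\|+\|y\|$. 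I expect this constant-tracking to be the only genuine, if minor, obstacle.
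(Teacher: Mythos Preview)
Your proposal is correct and follows essentially the same approach as the paper: the Hessian bound via the quadratic-form identity, the factorizations $\langle u,A_ix\rangle^2-\langle u,A_iy\rangle^2=\langle u,A_i(x-y)\rangle\langle u,A_i(x+y)\rangle$ and $\langle x,A_ix\rangle-\langle y,A_iy\rangle=\langle x+y,A_i(x-y)\rangle$, Cauchy--Schwarz plus Assumption~\ref{sample-RIP-general}, and then the integral Taylor remainder for the gradient bound. The only cosmetic difference is that the paper centers the Taylor expansion at $x$ rather than at $y$ (pulling out $\nabla^2 f(x)$ and bounding $\|\nabla^2 f(z_t)-\nabla^2 f(x)\|$), which yields the bound with $\|\nabla^2 f(x)\|$ in place of $\|\nabla^2 f(y)\|$; your choice of centering at $y$ in fact matches the stated inequality more directly, and the constant-tracking you describe ($3\bar\lambda(\tfrac23\|y\|+\tfrac13\|x\|)\leq 2\bar\lambda(\|x\|+\|y\|)$) is exactly what is needed.
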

\begin{proof}
The Cauchy's inequality and Assumption \ref{sample-RIP-general} allow us to obtain for any $u\in\mathbb{R}^p$,
\bea{lll}&&|\langle u,(\nabla^2f(x)-\nabla^2f(y))u\rangle|\\
 &\leq&\frac{2}{n}\sum_{i=1}^n |\langle u,A_ix\rangle^2-\langle u, A_iy\rangle^2| +\frac{1}{n}\sum_{i=1}^n |(\langle x, A_ix\rangle-\langle y, A_iy\rangle)\langle u,A_iu\rangle|\\
 &=&\frac{2}{n}\sum_{i=1}^n |\langle u,A_i(x-y)\rangle\langle u, A_i(x+y)\rangle| +\frac{1}{n}\sum_{i=1}^n |(\langle x-y, A_i(x+y)\rangle\langle u,A_iu\rangle|\\
 &\leq&2\sqrt{\frac{1}{n}\sum_{i=1}^n \langle u,A_i(x-y)\rangle^2}\sqrt{\frac{1}{n}\sum_{i=1}^n \langle u, A_i(x+y)\rangle^2}\\
 & +&
 \sqrt{\frac{1}{n}\sum_{i=1}^n \langle x-y, A_i(x+y)\rangle^2}\sqrt{\frac{1}{n}\sum_{i=1}^n\langle u,A_iu\rangle^2}\\
&\leq&2{\eta}\|x+y\|\|x-y\|\|u\|^2+{\eta}\|x-y\|\|x+y\|\|u\|^2\\
&\leq&3{\eta}\|x+y\|\|x-y\|\|u\|^2,\nnb\eda
which together with $\|\nabla^2f(x)\|=\sup_{\|u\|=1}|\langle u, \nabla^2f(x)u\rangle|$ shows the first result. For the second one, by denoting $z_t:=y+t(x-y)$ for some $t\in(0,1)$ and using the first result, we obtain
\bea{llll}
\|\nabla^2f(z_t)-\nabla^2f(x)\|
=3{\eta}\|z_t+x\|\|z_t-x\|
\leq3{\eta}\left[\|x+y\|+t\|x-y\|\right](1-t)\|x-y\|
\nnb\eda
which enables us to derive the following chain of inequalities,
\bea{llll}
\|\nabla f(x)-\nabla f(y)\|
&=&\|\int_0^1\nabla^2f(z_t)(x-y)\mathrm{d}t\|\\
&\leq&\|\nabla^2f(x)(x-y)\|+\int_0^1\|\nabla^2f(z_t)-\nabla^2f(x)\|\|x-y\|\mathrm{d}t\\
&\leq&\|\nabla^2f(x)(x-y)\|+3{\eta} \|x-y\|^2 \int_0^1\left[\|x+y\|+t\|x-y\|](1-t)\right]\mathrm{d}t\\
&=&\|\nabla^2f(x)(x-y)\|+3{\eta} \|x-y\|^2  \left( \frac{1}{2}\|x+y\|+ \frac{1}{6}\|x-y\|\right) \\
&\leq&\|\nabla^2f(x)\|\|x-y\|+2{\eta} \|x-y\|^2  \max\{\|x+y\|, \|x-y\|\}.
\nnb\eda
The whole proof is completed.
\end{proof}
 {Hereafter, we define a local region $\mathcal{N}_*(r)$ around $x^*$  and some useful constants by
\bea{lll}\label{def-N-r*}
\mathcal{N}_*(r)&:=&\{x\in\mathbb{R}^p: \min\{\|x-x^*\|,\|x+x^*\|\}\leq r\},\\
\mathcal{N}_*&:=&\mathcal{N}_*(r_*),\qquad\text{where}\qquad r_*:= {{\lambda}}\|x^*\|/({6{\eta}}),\\
\epsilon_*&:=&\epsilon(r_*),~~\qquad\text{where}\qquad\epsilon(r):=(3{\lambda}-{\eta})r^3/32,\\
\kappa_*&:=&\|\frac{1}{n}\sum_{i=1}^n\varphi_i(x^*)A_i\|.\eda
Based on the above definitions, we have the following result. 
  \begin{lemma}\label{hessian-pd} Suppose Assumption \ref{sample-RIP-general} holds and assume $\kappa_* \leq ({{\lambda}}/{2})\|x^*\|^2$, then 
 \bea{l}
 \lambda_{\mathrm{min}}(\nabla^2f(x))\geq ({{\lambda}}/{3})\|x^*\|^2,~~\text{for all} ~x(\neq0)\in\mathcal{N}_*,\\
\langle\nabla f(x), x-x^*\rangle\geq({{\lambda}}/{3})\|x^*\|^2\|x-x^*\|^2-\kappa_*\|x^*\|\|x-x^*\|.\nnb\eda
\end{lemma}
\begin{proof}
We note from Assumption \ref{sample-RIP-general} that $\lambda_{\mathrm{min}}(\frac{2}{n}{\bf A}(x^*)^T{\bf A}(x^*))\geq2{\lambda}\|x^*\|^2$, which together with \eqref{grad-hess} and $\kappa_* \leq ({{\lambda}}/{2})\|x^*\|^2$ yields that
\bea{llll}\lambda_{\mathrm{min}}(\nabla^2f(x^*))\geq\lambda_{\mathrm{min}}(\frac{2}{n}{\bf A}(x^*)^T{\bf A}(x^*))-\|\frac{1}{n}\sum_{i=1}^n\varphi_i(x^*)A_i\|\geq({3{\lambda}}/{2})\|x^*\|^2.\nnb\eda
Without loss of the generality, we assume $\|x-x^*\|\leq\|x+x^*\|$. Thus for any $x\in\mathcal{N}_*$, we have $\|x-x^*\|\leq r_*$ and $\|x+x^*\|\leq \|x-x^*\|+2\|x^*\|\leq r_*+2\|x^*\|$. Using these conditions and Lemma \ref{hessian-lip}, for any $x\in\mathcal{N}_*$, we have
\bea{llll}
\lambda_{\mathrm{min}}(\nabla^2f(x))&\geq&\lambda_{\mathrm{min}}(\nabla^2f(x^*))-
\sup_{x\in\mathcal{N}_*}\|\nabla^2f(x)
-\nabla^2f(x^*)\|\\
&\geq&({3{\lambda}}/{2})\|x^*\|^2-3{\eta}\sup_{x\in\mathcal{N}_*}\big(
\|x+x^*\|\|x-x^*\|\big)\\
&\geq&({3{\lambda}}/{2})\|x^*\|^2-3{\eta}(r_*^2+2r_*\|x^*\|) \geq ({{\lambda}}/{3})\|x^*\|^2,\nnb\eda
where the last inequality is from  ${\lambda}\leq{\eta}$. We then get the first result by taking infimum over the above inequality. From \eqref{grad-hess}, we have 
\bea{llll}
\langle\nabla f(x^*), x-x^*\rangle\leq\|\nabla f(x^*)\|\|x-x^*\|\leq  \kappa_*\|x^*\|\|x-x^*\|.
\nnb\eda
For any $x\in\mathcal{N}_*$, it has $z^*_t:=x^*+t(x-x^*)\in\mathcal{N}_*$ for any $t\in[0,1]$. Then the mean value theorem allows us to derive that
\bea{llll}
\langle\nabla f(x), x-x^*\rangle&=&\langle\nabla f(x)-\nabla f(x^*), x-x^*\rangle+\langle\nabla f(x^*), x-x^*\rangle\\
&=&\langle\int_0^1\nabla^2f(z^*_t)(x-x^*)\mathrm{d}t, x-x^*\rangle+\langle\nabla f(x^*), x-x^*\rangle\\
&\geq&({{\lambda}}/{3})\|x^*\|^2\|x-x^*\|^2+\langle\nabla f(x^*), x-x^*\rangle\\
&\geq&({{\lambda}}/{3})\|x^*\|^2\|x-x^*\|^2-\kappa_*\|x^*\|\|x-x^*\|
\nnb\eda
displaying the desired result. 
\end{proof}
The following result is related to the strict saddle property.  According to \cite{Lee16, ge15}, a twice continuously differentiable function $f$ satisfies the strict saddle property if each stationary point $x$ of $f$ is either a local minimizer or a strict saddle (i.e., $\lambda_{\min}(\nabla^2 f(x))<0$).

 \begin{prop}\label{strict-saddle} Suppose Assumption \ref{sample-RIP-general} holds,  ${\eta}<3{\lambda}$, and $\kappa_*\leq {(3{\lambda}-{\eta})r^2}/{6}$ for a given $r>0$. Then for any point $x$, at least one of the following conditions holds,
\bea{c}\label{three-conditions}
{\rm c1)}~ x\in\mathcal{N}_*(r), ~~{\rm c2)}~  \|\nabla f(x)\|\geq\epsilon(r),~~{\rm c3)}~  \lambda_{\mathrm{min}}(\nabla^2f(x))\leq- {(3{\lambda}-{\eta})}\|x^*\|^2/8.
\eda 
In particular, we can take $r=r_*$.
\end{prop}
\begin{proof} It suffices to show that c3) holds if  $x\notin\mathcal{N}_*(r)$ and $\|\nabla f(x)\|<\epsilon(r)$. 
Let $x$ be any point such that $x\notin\mathcal{N}_*(r)$ and $\|\nabla f(x)\|<\epsilon(r)$. For notational simplicity, denote $x^-:=x-x^*$ and $x^+:=x+x^*$. Without loss of generality, we let $\|x^-\|\leq\| x^+\|$ (The following analysis will be centred on $x^+$ if $\|x^-\|\geq\| x^+\|$). Then 
\bea{lll}\label{xplus-lb}
\|x^*\|=\frac{1}{2}\|x^*+x-x+x^*\|\leq\frac{1}{2}\|x^-\|+\frac{1}{2}\|x^+\|\leq \|x^+\|.\eda 
By $4x-x^-=x^-+2x^+$ and $\varphi_i(x)-\varphi_i(x^*)=\langle x^{-},A_ix^+\rangle$, we have
\bea{lll}
&&\langle x^-, \nabla^2f(x) x^-  \rangle\\
&=&\frac{1}{2n}\sum_{i=1}^n \left[4\langle x^-, A_ix\rangle^2+  2\varphi_i(x)\langle x^-, A_ix^-\rangle \right]\nonumber\\
&=&\frac{1}{2n}\sum_{i=1}^n\left[  \langle x^-, 2A_ix\rangle^2+ 2\varphi_i(x)\langle x^-, A_i(4x-4x+x^-)\rangle\right]\nonumber\\
&=&\frac{1}{2n}\sum_{i=1}^n\left[ \langle x^-, A_i(x^-+x^+)\rangle^2+2 \varphi_i(x)\langle x^-, A_i(x^--4x)\rangle
+8 \varphi_i(x)\langle x^-, A_ix\rangle\right]\nonumber\\
&=&\frac{1}{2n}\sum_{i=1}^n\left[ \langle x^-, A_i(x^-+x^+)\rangle^2-2 \varphi_i(x)\langle x^-, A_i(x^-+2x^+)\rangle
\right]+4\langle\nabla f(x),x^-\rangle\nonumber\\
&=&\frac{1}{2n}\sum_{i=1}^n\big[ \langle x^-, A_i(x^-+x^+)\rangle^2-2 (\varphi_i(x)-\varphi_i(x^*))\langle x^-, A_i(x^-+2x^+)\rangle\\
&&\qquad\qquad+\varphi_i(x^*)\langle x^-, A_i(x^-+2x^+)\rangle\big]+4\langle\nabla f(x),x^-\rangle\nonumber\\
&=&\frac{1}{2n}\sum_{i=1}^n \big[\langle x^-, A_i(x^-+x^+)\rangle^2 - (\langle x^-, A_ix^+\rangle)(2\langle x^-, A_ix^-\rangle+4 \langle x^-, A_ix^+\rangle)\\
&&\qquad\qquad+\varphi_i(x^*)\langle x^-, A_i(x^-+2x^+)\rangle\big]+4\langle\nabla f(x),x^-\rangle\nonumber\\
&=&\frac{1}{2n}\sum_{i=1}^n \big[(\langle x^-, A_ix^-\rangle +\langle x^-, A_ix^+\rangle)^2-2\langle x^-, A_ix^+\rangle\langle x^-, A_ix^-\rangle-4\langle x^-, A_ix^+\rangle^2\\
&&\qquad\qquad+\varphi_i(x^*)\langle x^-, A_i(x^-+2x^+)\rangle\big]+4\langle\nabla f(x),x^-\rangle\nonumber\\
&=&\frac{1}{2n}\sum_{i=1}^n  \left[\langle x^-, A_ix^-\rangle  ^2 - 3   \langle x^-, A_ix^+\rangle^2+\varphi_i(x^*)\langle x^-, A_i(x^-+2x^+)\rangle\right]+4\langle\nabla f(x),x^-\rangle\nonumber.
\eda
Now we conclude form the above condition and Assumption \ref{sample-RIP-general}   that
\bea{lll}
&&2\langle x^-, \nabla^2f( x) x^-  \rangle\\
&\leq&
 {\eta}\|x^-\|^4-3{\lambda}\|x^-\|^2\|x^+\|^2+|\frac{1}{n}\sum_{i=1}^n\varphi_i(x^*)\langle x^-, A_i(x^-+2x^+)\rangle|+
 8\|\nabla f(x)\|\|x^-\|\\
 &\leq&
 {\eta}\|x^-\|^4-3{\lambda}\|x^-\|^2\|x^+\|^2+\|\frac{1}{n}\sum_{i=1}^n\varphi_i(x^*)A_i\|\|x^-\|\|x^-+2x^+\|+
 8\|\nabla f(x)\|\|x^-\|\\
 &\leq&
-(3{\lambda}-{\eta})\|x^-\|^2\|x^+\|^2 +3\kappa_*\|x^-\|\|x^+\|+8\|\nabla f(x)\|\|x^-\|\\
&=&\Big(-(3{\lambda}-{\eta})+\frac{3\kappa_*}{\|x^-\|\|x^+\|}+\frac{8\|\nabla f(x)\|}{\|x^-\|\|x^+\|^2}\Big)\|x^-\|^2\|x^+\|^2\\
&\leq&\Big(-(3{\lambda}-{\eta})+3\kappa_* r^{-2}+8\|\nabla f(x)\|r^{-3}\Big)\|x^-\|^2\|x^+\|^2\qquad(\text{by $x\notin\mathcal{N}_*(r)$})\\
&\leq&-\frac{3{\lambda}-{\eta}}{4}\|x^-\|^2\|x^+\|^2 \leq -\frac{3{\lambda}-{\eta}}{4}\|x^-\|^2\|x^*\|^2,
\nnb\eda
 where  the fifth inequality follows  from conditions  ${\eta}<3{\lambda}$, $\kappa_*\leq({3{\lambda}-{\eta}})r^2/6$, and
 $\|\nabla f(x)\|\leq \epsilon(r)=(3{\lambda}-{\eta})r^3/32$, and the last inequality is from (\ref{xplus-lb}). The above condition   shows c3).
\end{proof}
\begin{remark}\label{remark-ssp}
In the noiseless case, $\kappa_*=0$ due to $\varphi_i(x^*)=0$ for all $i\in[n]$, thereby condition $\kappa_*\leq({3{\lambda}-{\eta}})r^2/6$ holds evidently. In the noisy case, Lemma \ref{err-sup-ub} in next section states that $\kappa_*$ is smaller than a threshold in a high probability.   Therefore, it is rational to assume $\kappa_*\leq({3{\lambda}-{\eta}})r^2/6$.
 
 Moreover,  based on Proposition \ref{strict-saddle} with taking $r=r_*$,   for any point whose gradient is too small to meet condition c2) in (\ref{three-conditions}), it is either in $\mathcal{N}_*=\mathcal{N}_*(r_*)$ or close to a strict saddle point. While from Lemma \ref{hessian-pd}, any points in $\mathcal{N}_*$ can not be a strict saddle point because the first condition in Lemma \ref{hessian-pd}. Therefore, we can conclude that any stationary point $w\in\mathcal{N}_*$ (i.e., $\nabla f(w)=0$) is a unique local minimizer to  (\ref{LSE-PQMR}) due to $w\in{\rm argmin}_{x\in\mathcal{N}_*} f(x)$ and the strong convexity of $f$ on $\mathcal{N}_*$.

Finally, for the noiseless case, there is $\kappa_*=0$. The above statement means that any stationary point $w\in\mathcal{N}_*$  satisfies $w={\rm argmin}_{x\in\mathcal{N}_*} f(x)$. Therefore, $0\leq f(w)={\rm min}_{x\in\mathcal{N}_*} f(x)\leq f(x^*)=0$, which leads to $w=x^*$ or $w=-x^*$. In other words, for the noiseless case if we find a stationary point in $\mathcal{N}_*$, then this stationary point is the true signal.
\end{remark}}
\section{Non-asymptotic Statistical Analysis}\label{sec:sta-ana}
In this section, we would like to see the reconstruction performance of model \eqref{LSE-PQMR} for original problem \eqref{PQMR}. That is, we aim at establishing the error bound of the gap between recovered signal $\hat x$ and true signal $x^*$.  To proceed with that, we need some assumptions on noise $\varepsilon_i$.

\begin{assumption}\label{ass-error}
Let errors $\{\varepsilon_1,\ldots,\varepsilon_n\}$ be i.i.d. sub-Gaussian with
positive proxy $\varsigma^2$ and $\varepsilon_1$ have mean zero and positive variance $\mathbb{E}\varepsilon_1^2$.
\end{assumption}

{
\begin{lemma}\label{err-sup-ub}
If  Assumptions \ref{sample-RIP-general} and \ref{ass-error} hold, then
\bea{lll}\mathbb{P}\Big( \sup_{u,v\in\B} |\frac{1}{n}\sum_{i=1}^n\left\langle u ,A_iv \right\rangle\varepsilon_i |>
2\sqrt{\frac{6\varsigma    ^2{\eta}p\log(1+2n)}{n}}\Big)\leq p_1+p_2\nnb\eda
and
\bea{lll}\mathbb{P}\Big( \kappa_*>
2\sqrt{\frac{6\varsigma    ^2{\eta}p\log(1+2n)}{n}}\Big)\leq p_1+p_2.\nnb\eda
\end{lemma}
\begin{proof}
Assumption \ref{ass-error} and \cite[Lemma 1.12]{RH15} imply that  $\varepsilon_i^2-\mathbb{E}\varepsilon_i^2$ is sub-exponential with parameter $16\varsigma    ^2$. From Bernstein's inequality, for any $t>0$,
\bea{c}
\mathbb{P}\left(\frac{1}{n}\left|\sum_{i=1}^n(\varepsilon_i^2-\mathbb{E}\varepsilon_i^2)\right|>t\right)\leq2 \exp\left\lbrace -\frac{nt}{32\varsigma    ^2}\min\left\{\frac{t}{16\varsigma    ^{2}},1\right\}\right\rbrace ,
\nnb\eda
which together with the choice of $t=\mathbb{E}\varepsilon_1^2 \leq \varsigma ^{2}$ due to the sub-Gaussian yields that
\bea{c}
\label{ei-bound}\mathbb{P}\left(\frac{1}{n}\sum_{i=1}^n\varepsilon_i^2>{}2\mathbb{E}\varepsilon_1^2\right)\leq2\exp\left\lbrace-\frac{n\mathbb{E}\varepsilon_1^2 }{512\varsigma    ^2} \right\rbrace = p_2.
\eda For any nonzero vectors $u, v\in\mathbb{R}^p$ and $t>0$, it follows from \cite[Corollary 1.7]{RH15}   that
\bea{c}
\mathbb{P}\left(\left|\frac{1}{n}\sum_{i=1}^n \left\langle u ,A_iv \right\rangle\varepsilon_i\right|>t\right)\leq
2\exp\left\lbrace -\frac{n^2t^2}{2\varsigma    ^2\sum_{i=1}^n \left\langle u ,A_iv \right\rangle^2}\right\rbrace ,\nnb
\eda
 which together with Assumption \ref{sample-RIP-general} results in
\bea{c}\label{uav-error}
\mathbb{P}\left(\left|\frac{1}{n}\sum_{i=1}^n  \left\langle u ,A_iv \right\rangle\varepsilon_i\right|>t\right)\leq
2\exp\left\lbrace -\frac{nt^2}{2\varsigma    ^2{\eta}\|u\|^2\|v\|^2}\right\rbrace .
\eda
For any $u\in\B$, let $$ \B(u,\delta):=\{x\in\B:\|x-u\|\leq{}\delta\}.$$ It follows from \cite[Lemma 14.27]{bugeer11} that there exist $J:=(1+2n)^p$ vectors $u_j\in\B$ such that
\bea{l}\label{def-B-cover}
\B\subseteq\bigcup_{j=1}^{J}\B\left( u_j,\frac{1}{n}\right) .
\eda
Using this fact,  for any fixed $u\in\B$, we obtain the following chain of inequalities
\bea{lll}
&&\sup_{u\in\B}|\frac{1}{n}\sum_{i=1}^n \left\langle u,A_iv\right\rangle\varepsilon_i| \\
&\leq&\sup_{u\in\bigcup_{j=1}^{J  }\B\left( u_j,\frac{1}{n}\right)}|\frac{1}{n}\sum_{i=1}^n \left\langle u,A_iv\right\rangle\varepsilon_i| \\
&=&\sup_{j\in[J  ]}\Big(|\frac{1}{n}\sum_{i=1}^n \left\langle u_j,A_iv\right\rangle\varepsilon_i|
+\sup_{u\in{}\B\left( u_j,\frac{1}{n}\right)}|\frac{1}{n}\sum_{i=1}^n \left\langle u-u_j,A_iv\right\rangle \varepsilon_i|\Big) \\
&\leq&\sup_{j\in[J  ]}\Big(|\frac{1}{n}\sum_{i=1}^n \left\langle u_j,A_iv\right\rangle\varepsilon_i|
+\sup_{u\in{}\B\left( u_j,\frac{1}{n}\right)}\sqrt{\frac{1}{n}\sum_{i=1}^n \left\langle u-u_j,A_iv\right\rangle^2}\sqrt{\frac{1}{n}\sum_{i=1}^n \varepsilon_i^2}\Big) \\
&\leq&\sup_{j\in[J  ]}\Big(|\frac{1}{n}\sum_{i=1}^n \left\langle u_j,A_iv\right\rangle\varepsilon_i|+\sup_{u\in{}\B\left( u_j,\frac{1}{n}\right)}\sqrt{\frac{{\eta}}{n}}\|v\|\|u-u_j\|\|\varepsilon\|\Big) \\
&\leq&\sup_{j\in[J  ]}|\frac{1}{n}\sum_{i=1}^n \left\langle u_j,A_iv\right\rangle\varepsilon_i|
+\frac{1}{n} \frac{\sqrt{{\eta}}}{\sqrt{n}} \|\varepsilon\|.\nnb
\eda
There are also $J$ vectors $v_k\in\B$ such that \eqref{def-B-cover}, namely,
$\B\subseteq\bigcup_{k=1}^{J}\B\left(v_k,\frac{1}{n}\right).$ Same reasoning allows us to obtain, for each fixed $u_j$,
\bea{lll}
\sup_{v\in\B}|\frac{1}{n}\sum_{i=1}^n \left\langle u_j,A_iv\right\rangle \varepsilon_i|
\leq\sup_{k\in[J  ]}|\frac{1}{n}\sum_{i=1}^n \left\langle u_j,A_iv_k\right\rangle\varepsilon_i|
+\frac{1}{n} \frac{\sqrt{{\eta}}}{\sqrt{n}} \|\varepsilon\|. \nonumber
\eda
 Using the above two facts delivers
\bea{lll}\label{norm2-number}
\sup_{u,v\in\B} |\frac{1}{n}\sum_{i=1}^n \left\langle u ,A_iv \right\rangle \varepsilon_i |
&\leq&
\sup_{v\in\B}\left(\sup_{j\in[J  ]}|\frac{1}{n}\sum_{i=1}^n \left\langle u_j,A_iv \right\rangle\varepsilon_i|
+\frac{1}{n} \frac{\sqrt{{\eta}}}{\sqrt{n}} \|\varepsilon\|\right) \\
&=&\sup_{j\in[J  ]}\sup_{v\in\B}|\frac{1}{n}\sum_{i=1}^n \left\langle u_j,A_iv \right\rangle\varepsilon_i|
+\frac{1}{n} \frac{\sqrt{{\eta}}}{\sqrt{n}} \|\varepsilon\|\\
&\leq&\sup_{j,k\in[J  ]}|\frac{1}{n}\sum_{i=1}^n \left\langle u_j,A_iv_k\right\rangle\varepsilon_i|
+\frac{1}{n} \frac{\sqrt{{\eta}}}{\sqrt{n}} \|\varepsilon\|.
\eda
Now direct calculation enables us to derive the following chain of inequalities
\bea{llll}
&&\mathbb{P}\Big( \sup_{u,v\in\B} |\frac{1}{n}\sum_{i=1}^n\left\langle u ,A_iv \right\rangle\varepsilon_i |>
t+\frac{\sqrt{2{\eta}\mathbb{E}\varepsilon_1^2}}{n}\Big)\\
&\leq&\mathbb{P}\Big( \sup_{j,k\in[J  ]} \left|\frac{1}{n}\sum_{i=1}^n \left\langle u_j,A_iv_k\right\rangle\varepsilon_i\right|
+\frac{1}{n} \frac{\sqrt{{\eta}}}{\sqrt{n}} \|\varepsilon\|>t+\frac{\sqrt{2{\eta}\mathbb{E}\varepsilon_1^2}}{n}\Big)\qquad&(\text{by \eqref{norm2-number}})\\
&\leq&\mathbb{P}\left( \frac{1}{n}\|\varepsilon\|^2>2\mathbb{E}\varepsilon_1^2\right)+\mathbb{P}\left( \sup_{j,k\in[J  ]} \left|\frac{1}{n}\sum_{i=1}^n \left\langle u_j,A_iv_k\right\rangle\varepsilon_i\right|>
t\right)
\\
&\leq&\mathbb{P}\left( \frac{1}{n}\|\varepsilon\|^2>2\mathbb{E}\varepsilon_1^2\right)+\sum_{j,k\in[J  ]}\mathbb{P}\left( \left|\frac{1}{n}\sum_{i=1}^n \left\langle u_j,A_iv_k\right\rangle\varepsilon_i\right|>
t\right)
\\
&\leq&p_2+\sum_{j,k\in[J  ]}\exp\left\lbrace -\frac{nt^2}{2\varsigma    ^2{\eta}\|u_j\|^2\|v_k\|^2}\right\rbrace \qquad &(\text{by (\ref{ei-bound})  and (\ref{uav-error})})\\
&\leq&p_2+2J  ^2 \exp\left\lbrace -\frac{nt^2}{2\varsigma    ^2{\eta}}\right\rbrace
\\
&=&p_2+2\exp\left\lbrace -\frac{nt^2}{2\varsigma    ^2{\eta}}+2p\log(1+2n)\right\rbrace
,\nnb
\eda
where the last inequality is due to $u_j,v_k\in\B$. By taking
\bea{l}t=\sqrt{\frac{6\varsigma    ^2{\eta}p\log(1+2n)}{n}},\nnb\eda
we derive the probability
\bea{lll}
2\exp\left\lbrace -\frac{nt^2}{2\varsigma    ^2{\eta}}+2p\log(1+2n) \right\rbrace  &= &2\exp\left\lbrace-p\log(1+2n)\right\rbrace
=p_1,\nnb\eda
and thus can claim that
\bea{lll}\sup_{u,v\in\B}\Big|\frac{1}{n}\sum_{i=1}^n\left\langle u ,A_iv \right\rangle\varepsilon_i\Big|&>&
\sqrt{\frac{6\varsigma    ^2{\eta}p\log(1+2n)}{n}}+\frac{\sqrt{2{\eta}\mathbb{E}\varepsilon_1^2}}{n},\nnb
\eda
holds with a probability no more than $p_1+p_2$. Since $np\log(1+2n) \geq {\mathbb{E}\varepsilon_1^2}/({3\varsigma    ^2})$ due to the property of sub-Gaussian random variable $\mathbb{E}\varepsilon_1^2\leq\varsigma    ^2$, it follows \bea{lll}\frac{\sqrt{2{\eta}\mathbb{E}\varepsilon_1^2}}{n}
\leq\sqrt{\frac{6\varsigma^2{\eta}p\log(1+2n)}{n}}\nonumber\eda
 which after simple manipulations derives the first result. As a direct result, the second follows from the fact that $\varphi_i(x^*)=-\varepsilon_i$ and $\|A\|=\sup_{u\in\mathbb{B}}\langle u, Au\rangle$ for a symmetric matrix $A$.
\end{proof}
}

\begin{lemma}\label{non-empty-level-set}
If   Assumption \ref{sample-RIP-general} holds, then $f(x^*)<f(0)$ for the noiseless case if  $\|b\|>0$ and \bea{l}\mathbb{P}(f(x^*)<f(0))\geq 1- p_3\nnb\eda for the noisy case under Assumption \ref{ass-error}.
\end{lemma}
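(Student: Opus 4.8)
The plan is to first evaluate the two function values explicitly. Since $b_i = \langle x^*, A_i x^*\rangle + \varepsilon_i$, the residual at the truth is $\varphi_i(x^*) = \langle x^*, A_i x^*\rangle - b_i = -\varepsilon_i$, so that $f(x^*) = \frac{1}{4n}\sum_{i=1}^n \varepsilon_i^2$, whereas $f(0) = \frac{1}{4n}\sum_{i=1}^n b_i^2 = \frac{1}{4n}\|b\|^2$. The noiseless case is then immediate: with $\varepsilon_i \equiv 0$ we have $f(x^*) = 0$ while $f(0) = \frac{1}{4n}\|b\|^2 > 0$ whenever $\|b\| > 0$, giving $f(x^*) < f(0)$.

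For the noisy case, I would reduce the desired inequality to a one-sided tail estimate for a single mean-zero Sub-Gaussian sum. Writing $c_i := \langle x^*, A_i x^*\rangle$ and expanding $\|b\|^2 = \sum_i (c_i + \varepsilon_i)^2 = \sum_i c_i^2 + 2\sum_i c_i\varepsilon_i + \sum_i \varepsilon_i^2$, the inequality $f(x^*) < f(0)$, i.e.\ $\sum_i \varepsilon_i^2 < \|b\|^2$, is equivalent to $0 < \sum_i c_i^2 + 2\sum_i c_i\varepsilon_i$, that is, to $\sum_{i=1}^n c_i \varepsilon_i > -\tfrac{1}{2}\sum_{i=1}^n c_i^2$. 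Thus the complementary (failure) event is exactly $\{\sum_i c_i\varepsilon_i \le -\tfrac{1}{2}\sum_i c_i^2\}$, a lower-tail event for the weighted noise sum, and the noise level itself never enters the threshold.

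The core of the argument is then concentration. Treating the $A_i$ (hence the deterministic constants $c_i$) as fixed — legitimate because Assumption \ref{sample-RIP-general} is assumed to hold and only the $\varepsilon_i$ are random — each $c_i\varepsilon_i$ is, by Assumption \ref{ass-error}, a mean-zero Sub-Gaussian variable with proxy $c_i^2\varsigma^2$, and by independence $\sum_i c_i\varepsilon_i$ is Sub-Gaussian with proxy $\varsigma^2\sum_i c_i^2$. The standard Sub-Gaussian tail bound then yields $\mathbb{P}(\sum_i c_i\varepsilon_i \le -\tfrac{1}{2}\sum_i c_i^2) \le \exp(-(\sum_i c_i^2)/(8\varsigma^2))$. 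Finally I would invoke Assumption \ref{sample-RIP-general} with $u = v = x^*$, namely $\underline{\lambda}\|x^*\|^4 \le \tfrac{1}{n}\sum_i\langle x^*, A_i x^*\rangle^2 \le \bar{\lambda}\|x^*\|^4$, to lower-bound $\sum_i c_i^2 \ge n\underline{\lambda}\|x^*\|^4$ in the exponent and, where the looser form is desired, to substitute the upper bound $\sum_i c_i^2 \le n\bar{\lambda}\|x^*\|^4$ into the variance proxy. Combining these produces a failure probability of the form $p_3$, hence $\mathbb{P}(f(x^*) < f(0)) \ge 1 - p_3$.

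The only delicate point is bookkeeping. The clean one-sided estimate already gives an exponent of order $n\underline{\lambda}\|x^*\|^4/\varsigma^2$, which is in fact \emph{sharper} than $p_3$; the precise constant $1/(18\bar{\lambda})$ and the leading factor $2$ in $p_3$ arise from using a two-sided tail and from inserting the upper RIP bound $\sum_i c_i^2 \le n\bar{\lambda}\|x^*\|^4$ into the proxy rather than its exact value. I expect reproducing this exact constant to be the main (purely computational) hurdle, since the structural part — reducing $f(x^*) < f(0)$ to the cross-term tail and bounding it by Sub-Gaussian concentration — is routine once the reduction is made.
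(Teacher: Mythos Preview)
Your proposal is correct and follows essentially the same route as the paper: both compute $f(x^*)=\tfrac{1}{4n}\|\varepsilon\|^2$ and $f(0)=\tfrac{1}{4n}\|b\|^2$, reduce $f(x^*)<f(0)$ to a tail event for the cross-term $\sum_i\langle x^*,A_ix^*\rangle\varepsilon_i$, and bound it via Sub-Gaussian concentration combined with Assumption~\ref{sample-RIP-general}. Your diagnosis of the bookkeeping is also right: the paper uses a two-sided bound with threshold $\tfrac{\underline{\lambda}}{3}\|x^*\|^4$ and inserts the upper RIP constant $\bar{\lambda}$ into the variance proxy (via the intermediate inequality~(\ref{uav-error})), which is exactly what produces the factor $2$ and the $1/(18\bar{\lambda})$ in $p_3$, whereas your one-sided argument with the exact proxy is sharper.
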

\begin{proof} For the noiseless case,  the claim is clearly true due to
\bea{c}f(x^*)=0<\frac{1}{4n}\|b\|^2=f(0).\nnb
\eda
For the noisy case, the inequality (\ref{uav-error}) implies that
\bea{c}
\mathbb{P}\left(\left|\frac{1}{n}\sum_{i=1}^n \langle x^*, A_ix^*\rangle \varepsilon_i\right|>\frac{{\lambda}}{3}\|x^*\|^4\right)
\leq2\exp\left\lbrace-\frac{n{\lambda}\|x^*\|^4}{18{\eta}\varsigma    ^2}\right\rbrace =p_3.
\nnb\eda
Using the above fact and Assumption \ref{sample-RIP-general},  we conclude that
\bea{lll}
f(x^*)-f(0)&=&\frac{1}{4n}\|\varepsilon\|^2-\frac{1}{4n}\| b\|^2\\
&=&\frac{1}{4n}\|\varepsilon\|^2-\frac{1}{4n} \sum_{i=1}^n( \langle x^*, A_ix^*\rangle + \varepsilon_i) ^2\qquad(\text{by \eqref{PQMR}})\\
&=&-\frac{1}{4n}\sum_{i=1}^n \langle x^*, A_ix^*\rangle ^2-\frac{1}{2n}\sum_{i=1}^n \langle x^*, A_ix^*\rangle \varepsilon_i\\
&\leq&-\frac{{\lambda}}{4}\|x^*\|^4+\frac{{\lambda}}{6}\|x^*\|^4\\
&=&-\frac{{\lambda}}{12}\|x^*\|^4<0
\nnb\eda
with a probability at least $1-p_3$.
\end{proof}

The above lemma indicates that $L(f(0))=\{ {x}\in\mathbb{R}^p: f( {x})<f( {0})\}$ is non-empty with a high probability and hence model \eqref{LSE-PQMR} can exclude a zero solution. In fact, if  level set $L(f(0))$ is empty, then $0$ is a stationary point of \eqref{LSE-PQMR} since $\nabla f(0)=0$ and thus is also a globally optimal solution to \eqref{LSE-PQMR}. However, $x^*=0$ is trivial and is not what we pursue. Therefore,
throughout the paper, we always assume $L(f(0))$ is non-empty if no additional explanations are provided.

Now we are ready to present our main result. It establishes a non-asymptotic inequality revealing that global minimum $\hat x$ of model \eqref{LSE-PQMR} can capture true signal $x^*$ exactly for the noiseless case while achieve an error rate $O(\sqrt{p\log(1+2n)/{n}})$ with a high probability for the noisy case. In other words, the global minimum of \eqref{LSE-PQMR} is unique for the noiseless case.

\begin{theorem}\label{nonasym-bound}
If   Assumption \ref{sample-RIP-general} holds, then $ \hat x=x^*$ or $ \hat x=-x^*$ for the noiseless case and
\bea{c}\label{hatx-asymp}
\min\{\|\hat{x}-x^*\|,\|\hat{x}+x^*\|\}\leq\frac{4}{{\lambda}\|x^*\|}
\sqrt{\frac{6\varsigma    ^2{\eta}p\log(1+2n)}{n}}
\eda
 with a probability at least $1-p_1-p_2$ for the noisy case under Assumption \ref{ass-error}.
\end{theorem}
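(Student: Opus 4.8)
The plan is to treat the noiseless and noisy cases by the same algebraic device, namely the factorization $\langle x,A_ix\rangle-\langle x^*,A_ix^*\rangle=\langle x-x^*,A_i(x+x^*)\rangle$ (valid since each $A_i$ is symmetric), and then feed the pair $u=\hat x-x^*$, $v=\hat x+x^*$ into Assumption \ref{sample-RIP-general}. For the noiseless case this is immediate: since $f\ge 0$ attains its minimum (the minimizer exists by coercivity, Lemma \ref{obj-coer1}) and $f(x^*)=0$, optimality forces $f(\hat x)=0$, hence $\langle\hat x,A_i\hat x\rangle=b_i=\langle x^*,A_ix^*\rangle$ for every $i$, i.e. $\langle u,A_iv\rangle=0$ for all $i$. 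The lower bound of Assumption \ref{sample-RIP-general} then gives $\underline\lambda\|u\|^2\|v\|^2\le \frac1n\sum_i\langle u,A_iv\rangle^2=0$, so one factor vanishes and $\hat x=x^*$ or $\hat x=-x^*$.

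For the noisy case I would begin from the optimality inequality $f(\hat x)\le f(x^*)$. Substituting $b_i=\langle x^*,A_ix^*\rangle+\varepsilon_i$ and setting $d_i:=\langle u,A_iv\rangle=\langle\hat x,A_i\hat x\rangle-\langle x^*,A_ix^*\rangle$, the squares expand as $f(\hat x)=\frac1{4n}\sum_i(d_i-\varepsilon_i)^2$ while $f(x^*)=\frac1{4n}\sum_i\varepsilon_i^2$; the $\sum_i\varepsilon_i^2$ terms cancel and one is left with the basic inequality $\frac1n\sum_i d_i^2\le \frac2n\sum_i d_i\varepsilon_i$. The left-hand side is bounded below by $\underline\lambda\|u\|^2\|v\|^2$ again through Assumption \ref{sample-RIP-general}, so the whole problem reduces to controlling the noise cross-term on the right.

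The hard part will be this cross-term, because $u$ and $v$ depend on the data through $\hat x$, so the concentration bound \eqref{uav-error} cannot be applied at the fixed pair $(u,v)$. My plan is to pass to a uniform bound. Writing $\frac1n\sum_i d_i\varepsilon_i=u^{\top}Mv$ with the symmetric random matrix $M:=\frac1n\sum_i\varepsilon_iA_i$, I have $\frac1n\sum_i d_i\varepsilon_i\le \|M\|\,\|u\|\,\|v\|$, and I would estimate $\|M\|=\sup_{\|x\|=1}|x^{\top}Mx|$ by a covering argument: fix a $1/4$-net $\mathcal N$ of the unit sphere with $|\mathcal N|\le 9^{p}$, so that $\|M\|\le 2\max_{x\in\mathcal N}|x^{\top}Mx|$; at each fixed net point \eqref{uav-error} with $u=v=x$ gives a sub-Gaussian tail, and a union bound at the level $t=\sqrt{6\varsigma^2\bar\lambda p\log(1+2n)/n}$ yields $\|M\|\le 2t$ on an event of probability at least $1-p_1$, because $9^{p}\exp(-3p\log(1+2n))\le\exp(-p\log(1+2n))$ for $n\ge 1$. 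The remaining deviation term $p_2$ I would obtain from a complementary Bernstein (sub-exponential) concentration of the noise energy $\frac1n\sum_i\varepsilon_i^2$ around $\mathbb E\varepsilon_1^2$, which is what controls the second factor in the probability bound.

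Finally I would assemble the pieces. Combining the two sides of the basic inequality gives $\underline\lambda\|u\|^2\|v\|^2\le 2\|M\|\,\|u\|\,\|v\|$, hence $\|u\|\,\|v\|\le 2\|M\|/\underline\lambda$. To convert this product into the one-sided estimate, I assume without loss of generality $\|u\|\le\|v\|$ (the opposite case is symmetric, since $M$ is symmetric), so that $\min\{\|\hat x-x^*\|,\|\hat x+x^*\|\}=\|u\|$; then, from $v-u=2x^*$ and the triangle inequality, $\|v\|\ge\tfrac12\|v-u\|=\|x^*\|$, giving $\|u\|\le 2\|M\|/(\underline\lambda\|x^*\|)$. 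Inserting $\|M\|\le 2t$ reproduces exactly \eqref{hatx-asymp}. I expect the covering step to be the only genuine obstacle: one must make the noise term uniform over the data-dependent pair $(u,v)$ while keeping the net small enough that the union bound collapses to $p_1$ and the crossover with the noise-energy event is accounted for by $p_2$; the algebraic basic inequality and the geometric lower bound $\|v\|\ge\|x^*\|$ are routine by comparison.
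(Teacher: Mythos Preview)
Your proposal is correct and in fact takes a somewhat cleaner route than the paper. Both proofs coincide on the noiseless case and on the basic inequality $\underline{\lambda}\|u\|^2\|v\|^2\le \frac{2}{n}\sum_i\langle u,A_iv\rangle\varepsilon_i$, as well as on the geometric step $\max\{\|u\|,\|v\|\}\ge\|x^*\|$. They differ in how the noise cross-term is made uniform. The paper does \emph{not} pass through the operator norm of $M=\frac1n\sum_i\varepsilon_iA_i$; instead it covers $u$ and $v$ separately by $1/n$-nets of size $(1+2n)^p$, bounds the discretization error $\frac1n\sum_i\langle u-u_j,A_iv\rangle\varepsilon_i$ via Cauchy--Schwarz and Assumption~\ref{sample-RIP-general}, which produces a residual term $\frac{1}{n}\sqrt{\bar\lambda/n}\,\|\varepsilon\|$, and then controls $\frac1n\|\varepsilon\|^2$ by Bernstein's inequality. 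This last step is precisely the origin of $p_2$. Your approach, exploiting the symmetry of $M$ and the standard $1/4$-net bound $\|M\|\le 2\max_{x\in\mathcal N}|x^\top Mx|$, sidesteps this discretization remainder altogether.

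The consequence is that in your argument the event of probability $p_2$ never actually appears: your covering plus \eqref{uav-error} already yields $\|M\|\le 2t$ with probability at least $1-p_1$, and the rest is deterministic algebra. So your sentence about obtaining $p_2$ from the noise-energy concentration is out of place in your own proof; you should simply observe that your bound holds with probability at least $1-p_1\ge 1-p_1-p_2$, which is slightly stronger than what the theorem asserts. What you gain with the operator-norm route is a shorter argument and a better probability; what the paper's double-net route buys is that it never invokes the symmetry of $M$ or the spectral-norm-to-quadratic-form reduction, working instead entirely with the bilinear quantities $\langle u,A_iv\rangle$ that already appear in Assumption~\ref{sample-RIP-general}.
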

\begin{proof}For the noiseless case, from $0=f(x^*)\geq f(\hat{x})\geq 0$, we have $f(x^*) = f(\hat{x})=0$. Then by $b_i=\langle x ^*, A_ix^*\rangle$ and Assumption \ref{sample-RIP-general}, we can conclude that
\bea{lll}
0&=&f(\hat{x}) - f(x^*)\\
&=& \frac{1}{4n}\sum_{i=1}^n (\langle\hat{x}, A_i\hat{x}\rangle-b_i)^2-\frac{1}{n}\sum_{i=1}^n(\langle x ^*, A_ix^*\rangle-b_i)^2\\
&=& \frac{1}{4n}\sum_{i=1}^n (\langle\hat{x}, A_i\hat{x}\rangle-\langle x ^*, A_ix^*\rangle)^2\\
&=&\frac{1}{4n}\sum_{i=1}^n \langle\hat{x}-x^*, A_i(\hat{x}+x^*)\rangle^2\\
&\geq&\frac{{\lambda}}{4}\|\hat{x}-x^*\|^2\|\hat{x}+x^*\|^2,\nnb
\eda
which indicates $\|\hat{x}-x^*\|\|\hat{x}+x^*\|=0$, showing the conclusion.

For the noisy case, if $\|\hat{x}-x^*\|\|\hat{x}+x^*\|=0$, then the conclusion is true clearly. So, we only focus on $\|\hat{x}-x^*\|\|\hat{x}+x^*\|\neq0$.
From $f(x^*)\geq f(\hat{x})$, $b_i=\langle x ^*, A_ix^*\rangle+\varepsilon_i$, and Assumption \ref{sample-RIP-general}, we conclude that
\bea{lll}\label{fun-xtrue-bound}
0
&\geq&
 \frac{1}{4n}\sum_{i=1}^n (\langle\hat{x}, A_i\hat{x}\rangle-b_i)^2-\frac{1}{4n}\sum_{i=1}^n(\langle x ^*, A_ix^*\rangle-b_i)^2\\
&=& \frac{1}{4n}\sum_{i=1}^n (\langle\hat{x}, A_i\hat{x}\rangle-\langle x ^*, A_ix^*\rangle-\varepsilon_i)^2-\frac{1}{4n}\sum_{i=1}^n\varepsilon_i^2\\
&=&\frac{1}{4n}\sum_{i=1}^n (\langle\hat{x}, A_i\hat{x}\rangle-\langle x ^*, A_ix^*\rangle)^2-\frac{1}{2n}\sum_{i=1}^n (\langle\hat{x}, A_i\hat{x}\rangle-\langle x ^*, A_ix^*\rangle)\varepsilon_i\\
&=&\frac{1}{4n}\sum_{i=1}^n \langle\hat{x}-x^*, A_i(\hat{x}+x^*)\rangle^2
-\frac{1}{2n}\sum_{i=1}^n \langle\hat{x}-x^*, A_i(\hat{x}+x^*)\rangle\varepsilon_i\\
&\geq&\frac{{\lambda}}{4}\|\hat{x}-x^*\|^2\|\hat{x}+x^*\|^2
-\frac{1}{2n}\sum_{i=1}^n \langle\hat{x}-x^*, A_i(\hat{x}+x^*)\rangle\varepsilon_i,
\eda
which by $\frac{\hat{x}-x^*}{\|\hat{x}-x^*\|},\frac{\hat{x}+x^*}{\|\hat{x}+x^*\|}\in\B$ results in
\bea{lll} \label{est-bound-00}
 \|\hat{x}-x^*\|\|\hat{x}+x^*\|
\leq \sup_{u,v\in\B}  \Big|\frac{2}{n{\lambda}}
\sum_{i=1}^n \left\langle u,A_iv\right\rangle \varepsilon_i \Big|.
\eda
If $\|\hat{x}+x^*\|\geq \|\hat{x}-x^*\|$, then $\langle \hat{x}, x^* \rangle \geq 0$ and hence $\|\hat{x}+x^*\| \geq \|x^*\|$. Similarly, if $\|\hat{x}+x^*\|\leq \|\hat{x}-x^*\|$, then $\langle \hat{x}, x^* \rangle \leq 0$ and hence $\|\hat{x}-x^*\| \geq \|x^*\|$. Overall,
\bea{lll}
\|\hat{x}-x^*\|\|\hat{x}+x^*\|\geq\|x^*\|\min\{\|\hat{x}-x^*\|,\|\hat{x}+x^*\|\}.
\nnb\eda
This together with \eqref{est-bound-00}  yields that
\bea{lll}\label{est-bound-1}
\min\{\|\hat{x}-x^*\|,\|\hat{x}+x^*\|\}
&\leq&\sup_{u,v\in\B} \left|\frac{2}{n{\lambda}\|x^*\|}
\sum_{i=1}^n \left\langle u,A_iv\right\rangle\varepsilon_i\right|.\nonumber
\eda
Combing this and Lemma \ref{err-sup-ub}, we have
\bea{c}
\min\{\|\hat{x}-x^*\|,\|\hat{x}+x^*\|\}\leq\frac{4}{{\lambda}\|x^*\|}\left(
\sqrt{\frac{6\varsigma    ^2{\eta}p\log(1+2n)}{n}}\right)\nonumber
\eda
 with  a probability at least $1-p_1-p_2$.
\end{proof}
\noindent A direct result of the above theorem is the consistency of $\hat{x}$.
\begin{corollary}\label{consistency}Suppose Assumptions \ref{sample-RIP-general} and \ref{ass-error} hold. If $p\log(1+2n)/n\to0$ as $n\to\infty$, then
\bea{c}\min\{\|\hat{x}-x^*\|,\|\hat{x}+x^*\|\}\to0.\nnb\eda \end{corollary}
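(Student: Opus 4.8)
The plan is to read this off directly from Theorem \ref{nonasym-bound}, since the corollary is essentially a limit computation once that theorem is in hand. First I would recall that Theorem \ref{nonasym-bound} guarantees, for the noisy case, that the event
\[
\min\{\|\hat{x}-x^*\|,\|\hat{x}+x^*\|\}\leq\frac{4}{\underline{\lambda}\|x^*\|}\sqrt{\frac{6\varsigma^2\bar{\lambda}\,p\log(1+2n)}{n}}
\]
holds with probability at least $1-p_1-p_2$. The right-hand side equals a fixed constant $C:=\tfrac{4}{\underline{\lambda}\|x^*\|}\sqrt{6\varsigma^2\bar{\lambda}}$, depending only on the fixed quantities $\underline{\lambda},\bar{\lambda},\varsigma,\|x^*\|$, multiplied by $\sqrt{p\log(1+2n)/n}$. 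Hence the hypothesis $p\log(1+2n)/n\to0$ immediately forces this deterministic bound to vanish as $n\to\infty$.

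Next I would verify that the exceptional probability $p_1+p_2$ also tends to zero. For $p_2=2\exp\{-n\mathbb{E}\varepsilon_1^2/(512\varsigma^2)\}$ this is clear because $\mathbb{E}\varepsilon_1^2>0$ is a fixed positive constant, so the exponent tends to $-\infty$. For $p_1=2\exp\{-p\log(1+2n)\}$, since $\log(1+2n)\to\infty$ and $p\geq1$, the exponent again tends to $-\infty$, giving $p_1\to0$.

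Finally I would combine the two facts to conclude convergence in probability, which is the intended meaning of consistency here. Fix any $\eta>0$. Once $n$ is large enough that $C\sqrt{p\log(1+2n)/n}\leq\eta$, the complement of the high-probability event above contains the event $\{\min\{\|\hat{x}-x^*\|,\|\hat{x}+x^*\|\}>\eta\}$, whence
\[
\mathbb{P}\big(\min\{\|\hat{x}-x^*\|,\|\hat{x}+x^*\|\}>\eta\big)\leq p_1+p_2\longrightarrow0 .
\]
The only point that requires care is that the deterministic bound and the failure probability must both vanish simultaneously, so that the probability of exceeding any prescribed $\eta$ is eventually bounded by $p_1+p_2$; beyond this there is no substantive obstacle, as Theorem \ref{nonasym-bound} has already done the heavy lifting.
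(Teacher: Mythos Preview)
Your proposal is correct and follows exactly the route intended by the paper: the corollary is stated there as an immediate consequence of Theorem~\ref{nonasym-bound} with no further proof given, and your argument simply spells out that the deterministic bound and the failure probabilities $p_1,p_2$ both vanish under the hypothesis $p\log(1+2n)/n\to0$.
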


\section{Gradient Regularized Newton Method}\label{sec:alg}
In the previous section, we have shown that the constructed  signal from \eqref{LSE-PQMR} can either capture the true signal  exactly or achieve an error bound with a high probability under some assumptions. Therefore, it is rational to pay attention to pursue a solution to \eqref{LSE-PQMR}. In this section, we aim to develop a numerical algorithm to solve it, before which, for notational simplicity, we denote
\bea{c}\label{def-H-k}
g^k:=\nabla{}f(x^{k}),\qquad H^k:=\frac{2}{n}{\bf A}(x^{k})^T{\bf A}(x^{k}).
\nnb\eda

\subsection{Algorithmic framework}
{Proposition \ref{strict-saddle}
states that the whole space, $\mathbb{R}^p$, can be divided into three
regions: 1) the region close to a local minimum; 2) a region where the gradient is large, and 3) a region around saddle point whose Hessian has a significant
negative eigenvalue. 
Intuitively, when the gradient is large, the function value
will decrease quickly  by gradient descent; when the point
is close to a saddle point, an inexact gradient could help the algorithm avoid the saddle
point and the function value will also decrease; when the point
is close to a local minimum point, the algorithm equipped with Newton directions can converge 
to a local minimum with a faster speed than the gradient descent. However, it is well known that the Newton method does not treat saddle points appropriately.}
Inspired by the regularized Newton method \cite{ueda10}, we present a modified  version in Algorithm \ref{algorithm 0}.

\begin{algorithm}[!th]
\SetAlgoLined
Initialize a bounded $x^0\in \mathbb{R}^{p}$, $ \epsilon_G, K_0, \beta>0$, $\mu_{1} , \mu_{2} , \alpha_1,\alpha_2,\delta\in(0,1)$.    Set $ k\Leftarrow0$.

\emph{(Phase I $-$ gradient descent)}

\While{$\|g^k\| \geq\epsilon_G $ {or $k\leq K_0$}}{
Find the smallest nonnegative integer $j_k$  such that
\begin{eqnarray}
\label{armijo-sg}
f(x^k-\alpha_1^{j_k}g^k)\leq{}f(x^k)-
\mu_{1} \alpha_1^{j_k}\|g^k \| ^2.\end{eqnarray}

Set $\tau_k=\alpha_1^{j_k}, x^{k+1}=x^k-\tau_kg^k$ and  $k\Leftarrow k+1$.
}

Record $K\Leftarrow k$ and $x^K:=x^k$.

\emph{(Phase II $-$ regularized Newton method)}

\While{the halting condition is not met}{
 Update regularized Newton direction ${d}^{k}$ by
\begin{eqnarray}\label{newton-direction}
    {d}^k=-( H^k+\beta\|g^{k}\| ^{\delta}I )^{-1}g^{k}.\end{eqnarray}
Find  the smallest nonnegative integer $j_k$  such that
\begin{eqnarray}\label{armijo-nm}f({x}^k+\alpha_2^{j_k}{d}^k)\leq{}f({x}^k)+
\mu_{2} \alpha_2^{j_k}\langle{}g^k,{d}^k\rangle.\end{eqnarray}
Set $\tau_k=\alpha_2^{j_k}, x^{k+1}= {x}^{k}+\tau_k{d}^{k}$ and $k\Leftarrow k+1$
}
 \caption{Gradient regularized Newton method (GRNM) \label{algorithm 0}}
\end{algorithm}
The framework of Algorithm \ref{algorithm 0} consists of two phases. The first phase exploits the gradient descent and aims to generate a good starting point  (i.e., $x^K$) for the second phase  from any arbitrary starting point $x^0$. {Phase I is terminated at point $x^K$ satisfying
\vspace{-1mm}
\begin{equation}
\label{halting-first}
\|g^K\|<{\epsilon}_G\qquad\text{and}\qquad k=K>K_0.
\end{equation}}
 The second phase employs the regularized Newton method for $k\geq K+1$ and could use the  halting condition similar to the first phase, namely,\vspace{-1mm}
\bea{c}\label{halting-cod}
\|g^k\|<{\epsilon}_N,
\eda
 where $\epsilon_N \in (0,\epsilon_G)$, because we will show that $\lim_{k\rightarrow\infty}\|g^k\|=0$ in Lemma \ref{sandle-point}.

\subsection{Convergence analysis}
Before the convergence analysis for  Algorithm \ref{algorithm 0}, we emphasize that if at a certain iteration $k$ point $x^k$ satisfies $\|g^k\| =0$, then we stop the algorithm immediately since $x^k$ is already a stationary point of problem \eqref{LSE-PQMR}. This indicates Algorithm \ref{algorithm 0} enables to terminate within finite steps, which is a very nice property. Therefore, hereafter, we always assume that  $\|g^k\|\neq0$ for any finite $k$ if no additional explanations are given.
\subsubsection{Convergence analysis for Phase I}
For  phase I, we show that $\{\tau_k:k\in[K]\}$ is bounded away from zero and  sequence $\{f(x^k):k\in[K]\}$ is non-increasing. Before that, for given point  $x^0\in\mathbb R^p$,  we define a function and two bounds
\bea{c}\label{def-phi-G}
~~~~~~\phi(x):=  \|x\| +\|\nabla f(x)\| ,~~~ \Phi(x^0):= {\sup}_{x\in  L(f(x^0))}  \phi(x),~~~
G(t) :=   {\sup}_{\|z\| \leq  t }  \|\nabla^2 f(z)\|,
\eda
where $L(f(x^0)$ is the level set given by \eqref{level set}.

\begin{lemma}\label{bound-B-G} If Assumption \ref{sample-RIP-general} holds, then  it has $G(\phi(x))\leq G(\Phi(x^0))<+\infty$ for any bounded $x^0\in\mathbb R^p$ and any $x\in L(f(x^0))$.
\end{lemma}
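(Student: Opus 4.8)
The plan is to split the statement into its two constituent assertions --- the monotone inequality $G(\phi(x)) \le G(\Phi(x^0))$ and the finiteness $G(\Phi(x^0)) < +\infty$ --- and to dispatch each using the definitions in \eqref{def-phi-G} together with the earlier lemmas. The observation that drives the whole argument is that $G(\cdot)$ is nondecreasing: since $G(t) = \sup_{\|z\| \le t} \|\nabla^2 f(z)\|$ is a supremum over the ball $\{z : \|z\| \le t\}$, enlarging $t$ enlarges the feasible set, so $t_1 \le t_2$ forces $G(t_1) \le G(t_2)$.

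For the inequality, I would first note that for any $x \in L(f(x^0))$ the very definition $\Phi(x^0) = \sup_{x \in L(f(x^0))} \phi(x)$ gives $\phi(x) \le \Phi(x^0)$. Feeding this bound into the monotonicity of $G$ yields $G(\phi(x)) \le G(\Phi(x^0))$ at once, which is the first half of the claim.

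For the finiteness I would proceed in two steps. First, since $x^0$ is bounded and $f$ is continuous, $f(x^0)$ is a finite number, and Assumption \ref{sample-RIP-general} together with Lemma \ref{obj-coer1} guarantees that the level set $L(f(x^0))$ in \eqref{level set} is compact. The function $\phi(x) = \|x\| + \|\nabla f(x)\|$ is continuous (the gradient $\nabla f$ in \eqref{grad-hess} is a polynomial in $x$), so it attains its maximum on the compact set $L(f(x^0))$; hence $\Phi(x^0) < +\infty$. Second, with $t := \Phi(x^0) < +\infty$ now fixed, Lemma \ref{fgh-bound} asserts precisely that $\sup_{\|z\| \le t} \|\nabla^2 f(z)\|$ is finite, which is exactly $G(\Phi(x^0)) < +\infty$. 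Combining the two halves completes the proof.

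The argument is essentially bookkeeping over the definitions, so I do not anticipate a genuine obstacle; the only point requiring care is establishing $\Phi(x^0) < +\infty$, and this is precisely where coercivity (Lemma \ref{obj-coer1}) is indispensable. Without the compactness of $L(f(x^0))$ the continuous function $\phi$ could fail to be bounded above on that set, and the subsequent appeal to Lemma \ref{fgh-bound} would lose its finite argument $t$.
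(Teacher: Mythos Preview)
Your proposal is correct and follows essentially the same route as the paper: compactness of $L(f(x^0))$ from Lemma~\ref{obj-coer1}, continuity of $\phi$ to get $\Phi(x^0)<+\infty$, Lemma~\ref{fgh-bound} for $G(\Phi(x^0))<+\infty$, and the definition of $\Phi$ plus monotonicity of $G$ for the inequality. The only difference is that you spell out the monotonicity of $G$ explicitly, whereas the paper leaves it implicit.
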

\begin{proof} The level set $L(f(x^0))$ is compact by Lemma \ref{obj-coer1} owing to the boundedness of $x^0$.  This and the continuity of $\phi(x)$ results in the boundedness of $\Phi(x^0)$, which by Lemma \ref{fgh-bound} shows  $G(\Phi(x^0))<+\infty$. Clearly, $G(\phi(x))\leq G(\Phi(x^0))$ due to $\phi(x)\leq \Phi(x^0)$ for any  $x\in L(f(x^0))$.
\end{proof}
\begin{lemma}\label{armi-exist-tau} Suppose Assumption \ref{sample-RIP-general} holds. Then there exists
\bea{c}
j_k \leq \max\left\lbrace 0, -\left\lfloor\log_{\alpha_1}\left( \frac{G(\Phi(x^0))}{2(1-\mu_{1} )}\right)\right\rfloor\right\rbrace  =:J_1<+\infty
\nnb\eda
to ensure  (\ref{armijo-sg}) for every $k\in[K]$, where $\lfloor a\rfloor$ represents the largest integer no greater than $a$.  
%
\end{lemma}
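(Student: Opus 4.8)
The plan is to prove that the Armijo backtracking in \eqref{armijo-sg} terminates after a number of step-halvings that is bounded \emph{uniformly in} $k$ by a quantity depending only on the data and the starting point $x^0$. The engine is a standard sufficient-decrease estimate coming from a second-order Taylor expansion of $f$ along the descent direction $-g^k$, combined with the uniform Hessian bound over the level set $L(f(x^0))$ furnished by Lemma \ref{bound-B-G}. Throughout Phase~I the while-loop condition guarantees $\|g^k\|\ge\epsilon_1>0$, so $g^k\neq0$ and all the divisions below are legitimate.

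First I would record the invariance of the level set. Whenever \eqref{armijo-sg} holds one has $f(x^{k+1})=f(x^k-\tau_k g^k)\le f(x^k)-\mu_1\tau_k\|g^k\|^2\le f(x^k)$, so inductively $x^k\in L(f(x^0))$ for every $k\in[K]$, the base case $x^0\in L(f(x^0))$ being trivial. Consequently $\phi(x^k)\le\Phi(x^0)$, and for any step $\alpha=\alpha_1^{j}\le 1$ every point on the segment $\{x^k-t\alpha g^k:t\in[0,1]\}$ satisfies $\|x^k-t\alpha g^k\|\le\|x^k\|+\|g^k\|=\phi(x^k)\le\Phi(x^0)$. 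By Lemma \ref{bound-B-G} this yields the key bound $\|\nabla^2 f(x^k-t\alpha g^k)\|\le G(\Phi(x^0))<+\infty$ for all such $t$.

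Next I would expand $f$ to second order. Using $g^k=\nabla f(x^k)$ and the integral form of the Taylor remainder,
\bea{c}
f(x^k-\alpha g^k)=f(x^k)-\alpha\|g^k\|^2+\alpha^2\int_0^1(1-t)\langle g^k,\nabla^2 f(x^k-t\alpha g^k)g^k\rangle\,\mathrm{d}t,\nnb
\eda
and the Hessian bound above gives $f(x^k-\alpha g^k)\le f(x^k)-\alpha\|g^k\|^2+\tfrac{1}{2}\alpha^2 G(\Phi(x^0))\|g^k\|^2$. Dividing the target inequality \eqref{armijo-sg} by $\alpha\|g^k\|^2>0$, one sees that \eqref{armijo-sg} with step $\alpha$ is implied by $\tfrac{1}{2}\alpha\,G(\Phi(x^0))\le 1-\mu_1$, i.e. by $\alpha\le 2(1-\mu_1)/G(\Phi(x^0))$.

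Finally I would convert this threshold into a bound on the exponent $j_k$. Setting $\alpha=\alpha_1^{j}$ and solving $\alpha_1^{j}\le 2(1-\mu_1)/G(\Phi(x^0))$, taking logarithms and recalling that $\alpha_1\in(0,1)$ reverses the inequality, any $j\ge -\log_{\alpha_1}\!\big(G(\Phi(x^0))/(2(1-\mu_1))\big)$ suffices; the smallest nonnegative such integer is exactly $\max\{0,-\lfloor\log_{\alpha_1}(\tfrac{G(\Phi(x^0))}{2(1-\mu_1)})\rfloor\}=J_1$, using $-\lfloor y\rfloor=\lceil -y\rceil$, and $J_1<+\infty$ because $G(\Phi(x^0))<+\infty$. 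The only genuine subtlety, and the step I would treat most carefully, is ensuring that the Hessian in the Taylor remainder is always evaluated inside the compact set on which Lemma \ref{bound-B-G} applies; this is precisely why the restriction $\alpha\le 1$ (automatic since $\alpha_1^{j}\le 1$) together with the level-set invariance of the iterates must be secured \emph{before} invoking the descent estimate rather than afterward.
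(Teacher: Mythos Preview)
Your proof is correct and follows essentially the same approach as the paper: induction on $k$ to keep the iterates inside $L(f(x^0))$, a second-order Taylor estimate along $-g^k$ with the Hessian bounded via $G(\Phi(x^0))$, and a logarithmic conversion to bound $j_k$. The only cosmetic differences are that the paper uses the Lagrange remainder and the intermediate quantity $G(\phi(x^k))$ before passing to $G(\Phi(x^0))$, and spells out $k=0,1$ explicitly rather than packaging the induction as you do; your final paragraph correctly flags the one point that needs care, namely that the level-set invariance and the sufficient-decrease estimate must be established together inductively.
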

\begin{proof}For every $k\in[K]$, we take
\begin{eqnarray}
j_k= \left\{
             \begin{array}{lll}
              0, & \mbox{if}~~\nu_k:=\frac{G(\phi(x^k))}{2(1-\mu_{1} )} \leq 1,\\[1.25ex]
             -\left\lfloor\log_{\alpha_1}\left(\nu_k\right)\right\rfloor, & \mbox{otherwise}.
             \end{array}
        \right.
        \nnb
\end{eqnarray}
We prove the conclusion by induction.
For $k=0$,  as  $x^0\in L(f(x^0))$, Lemma \ref{bound-B-G} implies $G(\phi(x^0))\leq G(\Phi(x^0))<+\infty$.  If $ \nu_0  \leq 1$, then $j_0=0$ and $\tau_0=\alpha_1^{j_0}=1$. This results in
\begin{eqnarray}\label{lktauk-0}\tau_0G(\phi(x^0))  \leq2(1-\mu_{1} ).\end{eqnarray}
If $ \nu_0  > 1$, then taking $\tau_0=\alpha_1^{j_0}$ derives
\bea{c}
\tau_0=\alpha_1^{j_0}=\alpha_1^{-\left\lfloor\log_{\alpha_1}\left(\nu_0\right)\right\rfloor}
\leq\alpha_1^{- \log_{\alpha_1}\left(\nu_0\right) }= \frac{1}{\nu_0}= \frac{2(1-\mu_{1} )}{G(\phi(x^0))},
\nnb\eda
which also shows (\ref{lktauk-0}). Since $G(\phi(x^0))<+\infty$, it follows $\tau_0>0$. Now consider a point $x_t^0:= x^0 - t \tau_0  g^0$ for some $t\in(0,1)$. It can be shown that
\bea{c}
\|x_t^0\|\leq \|x^0\|+ t\tau_0 \|g^0 \|\leq \|x^0\|+ \|g^0 \|=\phi(x^0)
\nnb\eda
due to $\tau_0 \in(0,1]$. This indicates that
\bea{c}\label{xi-0-G}
\|\nabla^2f(x_t^0)\| \leq   \sup_{\|x\| \leq \phi(x^0)} \|\nabla^2 f(x)\|  = G(\phi(x^0)) \leq G(\Phi(x^0))<+\infty.
 \eda
Using this, $f$ being twice continuously differentiable and Taylor's expansion yields that
\bea{llll}\label{arm-half}
f(x^{0}-\tau_0 g^0 )
&=& f(x^{0})-\tau_0\langle g^0,g^0\rangle+({\tau_0^{2}}/{2})\langle g^0,\nabla^2f(\xi^0)g^0\rangle \\
&\leq&f(x^{0})- \tau_0\|g^0\|^2+({\tau_0^{2}}/{2})\|\nabla^2f(x_t^0)\|\|g^0\|^2 \\
&\leq&f(x^{0})- \tau_0\|g^0\|^2+({\tau_0^{2}}/{2})G(\phi(x^0)) \|g^0\|^2 &(\text{by \eqref{xi-0-G}})\\
&\leq&f(x^{0})- \tau_0\|g^0\|^2+ \tau_0(1-\mu_{1} )\|g^0\|^2 &(\text{by \eqref{lktauk-0}})\\
&=&f(x^{0})-\mu_{1}  \tau_0\|g^0\|^2,
\eda
displaying (\ref{armijo-sg}) for $k=0$. The above relation  also results in $f(x^{1}) \leq f(x^{0})$, thereby delivering $x^{1}\in L(f(x^0))$ and $G(\phi(x^1))\leq G(\Phi(x^0))<+\infty$ from Lemma \ref{bound-B-G}.

Now for $k=1$, same reasoning to check \eqref{lktauk-0} enables to render
\bea{llll}\label{lktauk-1}
\tau_1 G(\phi(x^1))  \leq2(1-\mu_{1} )\qquad\text{and}\qquad \tau_1>0.
\eda
Consider point $x_t^1:= x^1 - t \tau_1 g^1 $ for some $t\in(0,1)$. It follows that   \bea{llll}\|x_t^1\|\leq \|x^1\|+t\tau_1 \| g^1\|\leq \|x^1\|+ \| g^1\|=\phi(x^1)\nnb\eda
due to $\tau_1\in(0,1]$ and thus
\bea{llll}\label{xi-1-G}
\|\nabla^2f(x_t^1)\| \leq   \sup_{\|x\| \leq \phi(x^1)} \|\nabla^2 f(x)\|  = G(\phi(x^1)) \leq G(\Phi(x^0))<+\infty.
\eda
 Then combining \eqref{lktauk-1}, \eqref{xi-1-G} and the same reasoning to prove \eqref{arm-half} allows us to conclude  (\ref{armijo-sg}) for $k=1$.  Repeating the above step can show (\ref{armijo-sg}) for any $k\geq2.$

We can conclude from the above proof that $G(\phi(x^k)) \leq G(\Phi(x^0))<+\infty$ for any integer $k\geq0$. Hence,   $\nu_k\leq \overline \nu :=\frac{G(\Phi(x^0))}{2(1-\mu_{1} )} $, which brings out $-\left\lfloor\log_{\alpha_1}\left(\nu_k\right)\right\rfloor   \leq -\left\lfloor\log_{\alpha_1}\left(\overline \nu\right)\right\rfloor.$
Again, by $G(\Phi(x^0))<+\infty$ we have $ -\left\lfloor\log_{\alpha_1}\left(\overline \nu\right)\right\rfloor<+\infty$, leading to
\bea{l}
j_k \leq\max\left\{0,  -\left\lfloor\log_{\alpha_1}\left(\overline \nu\right)\right\rfloor\right\}=J_1<+\infty
,\nnb\eda
finishing the whole proof. 
\end{proof}
The above lemma implies that the step sizes in \eqref{armijo-sg} can be well defined. That is,  there is always a finite $j_k$ such that $\alpha_1^{j_k}>0$. This result
enables us to conclude that the first phase can stop within finitely many steps, as shown by the following lemma.
\begin{lemma}\label{first-alg-finite} If Assumption \ref{sample-RIP-general} holds, then the following statements are true.
\begin{itemize}[leftmargin=20pt]
\item[i)] Sequence $\{f(x^k):k\in[K]\}$ is non-increasing and $\{x^k:k\in[K]\}\subseteq L(f(x^0))$ is bounded.

\item[ii)] Halting condition (\ref{halting-first}) can be met with a finite $K$, namely, $K<+\infty$.
\end{itemize}
\end{lemma}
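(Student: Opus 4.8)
The plan is to turn the finite, uniformly bounded backtracking exponents from Lemma \ref{armi-exist-tau} into a uniform per-step decrease of $f$, and then to close the argument using the fact that $f\ge 0$.

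For part (i), I would read off monotonicity directly from the Armijo rule \eqref{armijo-sg}. For each $k\in[K]$, Lemma \ref{armi-exist-tau} produces a finite exponent $j_k\le J_1$, so the accepted step $\tau_k=\alpha_1^{j_k}$ is strictly positive; moreover we are still in Phase I, so $\|g^k\|\neq 0$. Since $\mu_1>0$, the right-hand side of \eqref{armijo-sg} is then strictly below $f(x^k)$, giving $f(x^{k+1})\le f(x^k)$ and hence the non-increasing property. Iterating from $x^0$ yields $f(x^k)\le f(x^0)$, i.e. $x^k\in L(f(x^0))$ for every such $k$; by Lemma \ref{obj-coer1} this level set is compact and in particular bounded, which is exactly (i).

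For part (ii), the key is to bound the decrease from below uniformly in $k$ as long as the loop has not terminated. The iteration-independent cap $j_k\le J_1$ gives $\tau_k\ge\alpha_1^{J_1}=:\underline\tau>0$, so the Phase-I step sizes are bounded away from zero, while the \texttt{while}-condition forces $\|g^k\|\ge\epsilon_1$ for every step actually taken. Feeding both bounds into \eqref{armijo-sg} produces the uniform estimate $f(x^{k+1})\le f(x^k)-\mu_1\underline\tau\epsilon_1^2$. Telescoping and using $f\ge 0$ (an average of squares) then gives $0\le f(x^K)\le f(x^0)-K\mu_1\underline\tau\epsilon_1^2$, so $K\le f(x^0)/(\mu_1\underline\tau\epsilon_1^2)<+\infty$; equivalently, an infinite $K$ would force $f(x^k)\to-\infty$, contradicting nonnegativity. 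Hence the halting condition $\|g^K\|<\epsilon_1$ is reached after finitely many iterations.

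The only load-bearing step is the uniform lower bound $\underline\tau>0$ on the accepted step sizes, and this is precisely what Lemma \ref{armi-exist-tau} delivers through the uniform cap $J_1$ (resting in turn on $G(\Phi(x^0))<+\infty$ from Lemma \ref{bound-B-G}). With $\underline\tau$ secured, everything else — monotonicity, the $\|g^k\|\ge\epsilon_1$ floor supplied by the loop guard, and the lower bound $f\ge 0$ — is routine, so I do not anticipate any genuine difficulty beyond carefully assembling these ingredients.
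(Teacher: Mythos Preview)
Your proposal is correct and follows essentially the same approach as the paper: both arguments combine the uniform step-size lower bound $\tau_k\ge\alpha_1^{J_1}$ from Lemma~\ref{armi-exist-tau} with the Armijo decrease \eqref{armijo-sg} and the nonnegativity of $f$ to force finite termination, and both read off monotonicity and containment in $L(f(x^0))$ directly from \eqref{armijo-sg}. The only cosmetic difference is that you plug in the floor $\|g^k\|\ge\epsilon_1$ to obtain an explicit bound $K\le f(x^0)/(\mu_1\alpha_1^{J_1}\epsilon_1^2)$, whereas the paper sums $\mu_1\alpha_1^{J_1}\|g^k\|^2$ over $k$ and argues that a summable sequence must eventually drop below $\epsilon_1$; these are equivalent bookkeeping choices.
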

\begin{proof}
By (\ref{armijo-sg}), we have
\bea{c}
\label{non-increasing}
\alpha_1^{J_1}\|g^k \|^2 \leq  \mu_{1} \tau_k\|g^k \|^2\leq{}f(x^k)-f(x^{k+1}),
\eda
which together with $f(\cdot)\geq0$ yields that
\bea{c}
\sum_{k\geq 0} \mu_{1} \alpha_1^{J_1}\|g^k\|^2\leq\sum_{k\geq 0} (f(x^k)-f(x^{k+1}))=f(x^0)-f(x^{\infty})\leq{}f(x^0).
\nnb\eda
Then there is a nonnegative and finite integer $K$ such that $\|g^{K} \| <{\epsilon_G}.$  The sequence $\{f(x^k):k\in[K]\}$ is non-increasing due to \eqref{non-increasing} and thus $f(x^k)\leq f(x^0)$. This suffices to $\{x^k:k\in[K]\} \subseteq L(f(x^0))$, which by the boundedness of $L(f(x^0))$ yields the boundedness of $\{x^k:k\in[K]\}$.
\end{proof}

\subsubsection{Convergence analysis for Phase II}
Again,  similar to \eqref{def-phi-G}, for two given parameters $\beta>0$ and $\delta\in(0,1)$, we define
\bea{l}\label{def-psi-G}
\psi(x) :=   \|x\| + \beta^{-1}\|\nabla f(x)\|^{1-\delta} ,\qquad \Psi(x^0) := {\sup}_{x\in  L(f(x^0))}  \psi(x).
\eda
Same reasoning to prove Lemma \ref{bound-B-G} allows us to conclude that if $x^0$ is bounded, then
\bea{c}\label{def-psi-G-bound}
G(\psi(x))\leq G(\Psi(x^0))<+\infty, \qquad \forall~ x\in L(f(x^0)).
\eda

\begin{lemma}\label{armi-exist-n}
If Assumption \ref{sample-RIP-general} holds, then the following statements are true.
\begin{itemize}[leftmargin=20pt]
 \item[i)]  Sequence  $\{f( x^{k}):k\geq K\}$ is strictly decreasing, namely, $f( x^{k}) > f( x^{k+1})$ for any $x^k\neq x^{k+1}$.
 \item[ii)] Sequence $\{ {x}^{k}:k\geq K\}$ is bounded, namely, $\{ {x}^{k}:k\geq K\} \subseteq L(f(x^0)).$
 \item[iii)] If $\|g^{k}\| \geq \epsilon_N$ for a given $\epsilon_N>0$, then there exists
 \bea{l}\label{armijo-nm-eps}
 j_k \leq J_2 := \max\left\{0,-\left\lfloor \log_{\alpha_2}\left(  \frac{G(\Psi(x^0))}{2(1-\mu_{1} ) \beta \epsilon_N^{\delta}} \right) \right \rfloor\right\}  <+\infty,
\eda
to ensure (\ref{armijo-nm}) and hence,
\bea{c}\label{armijo-nm-eps-decrease}
f({x}^{k+1})\leq{}f({x}^k)+
\mu_{2} \alpha_2^{J_2}\langle g^k,{d}^k\rangle.\eda
 \end{itemize}
\end{lemma}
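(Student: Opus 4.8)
The plan is to prove the three assertions simultaneously by induction on $k \ge K$, because they are mutually dependent: the strict decrease in (i) needs the Armijo step of (iii) to succeed, (iii) needs $x^k$ to sit in a region where the Hessian is controlled, and that region is supplied by the boundedness in (ii), which is in turn a consequence of (i). The induction mirrors the Phase~I argument of Lemma \ref{armi-exist-tau} almost verbatim, with the steepest-descent direction $-g^k$ replaced by the regularized Newton direction $d^k$ of \eqref{newton-direction}, and with $\phi,\Phi$ replaced by $\psi,\Psi$ from \eqref{def-psi-G}. The base case $x^K\in L(f(x^0))$ is already furnished by Lemma \ref{first-alg-finite}.

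First I would record the two algebraic facts about $d^k$ that drive everything. Since $H^k\succeq 0$ and $g^k\ne 0$, the matrix $H^k+\beta\|g^k\|^{\delta}I$ is positive definite, so $d^k$ is well defined and $g^k=-(H^k+\beta\|g^k\|^{\delta}I)d^k$. Pairing with $d^k$ gives the descent estimate
\[
-\langle g^k,d^k\rangle=\langle d^k,H^kd^k\rangle+\beta\|g^k\|^{\delta}\|d^k\|^2\ge\beta\|g^k\|^{\delta}\|d^k\|^2>0,
\]
so $d^k$ is a strict descent direction (and in particular $d^k\ne 0$). Combining this with the Cauchy--Schwarz bound $-\langle g^k,d^k\rangle\le\|g^k\|\|d^k\|$ yields the crucial norm control
\[
\|d^k\|\le\beta^{-1}\|g^k\|^{1-\delta},
\]
which is exactly the second summand of $\psi$ in \eqref{def-psi-G}.

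For the inductive step, assume $x^k\in L(f(x^0))$. Because $\tau_k=\alpha_2^{j_k}\in(0,1]$, the norm control gives, for every $t\in(0,1)$,
\[
\|x^k+t\tau_kd^k\|\le\|x^k\|+\|d^k\|\le\|x^k\|+\beta^{-1}\|g^k\|^{1-\delta}=\psi(x^k)\le\Psi(x^0),
\]
so the entire line-search segment stays inside the ball of radius $\Psi(x^0)$, on which $\|\nabla^2 f\|\le G(\Psi(x^0))<+\infty$ by \eqref{def-psi-G-bound}. A second-order Taylor expansion of $f$ along $d^k$, together with this Hessian bound and the descent estimate, reduces the Armijo inequality \eqref{armijo-nm} to the sufficient condition $\tau_k\le 2(1-\mu_2)\beta\|g^k\|^{\delta}/G(\Psi(x^0))$; since $\|g^k\|\ge\epsilon$ forces $\|g^k\|^{\delta}\ge\epsilon^{\delta}$, every $\tau_k$ below $2(1-\mu_2)\beta\epsilon^{\delta}/G(\Psi(x^0))$ works. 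Taking the smallest $j_k$ with $\alpha_2^{j_k}$ beneath this threshold produces the explicit bound $J_2$ of \eqref{armijo-nm-eps}, precisely as $J_1$ arose in Lemma \ref{armi-exist-tau}, and hence \eqref{armijo-nm-eps-decrease}. The accepted step satisfies \eqref{armijo-nm}, which with $\langle g^k,d^k\rangle<0$ gives $f(x^{k+1})<f(x^k)$ whenever $x^{k+1}\ne x^k$ and forces $x^{k+1}\in L(f(x^0))$; this closes the induction and delivers (i) and (ii), while the step-length estimate is (iii).

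The one delicate point, and the main obstacle, is the geometric confinement of the line-search segment: the Hessian bound $G(\Psi(x^0))$ is valid only inside a fixed ball, so I must ensure $x^k+t\tau_kd^k$ never escapes it. This is exactly where the regularizer $\beta\|g^k\|^{\delta}I$ earns its keep, since it is what produces $\|d^k\|\le\beta^{-1}\|g^k\|^{1-\delta}$ and thereby ties the admissible step length to the same function $\psi$ that defines $\Psi(x^0)$; without regularization the Newton step could be arbitrarily long and the confinement would fail. Everything else is the routine Taylor/Armijo bookkeeping already performed for Phase~I.
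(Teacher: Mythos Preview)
Your proposal is correct and uses the same ingredients as the paper: the descent estimate $-\langle g^k,d^k\rangle\ge\beta\|g^k\|^{\delta}\|d^k\|^2$, the norm control $\|d^k\|\le\beta^{-1}\|g^k\|^{1-\delta}$, confinement of the line-search segment in the ball of radius $\psi(x^k)\le\Psi(x^0)$, and a second-order Taylor expansion to read off the $J_2$ bound. The only organizational difference is that the paper decouples the three items rather than running a simultaneous induction: since $f$ is smooth and $d^k$ is a descent direction, \emph{some} finite $j_k$ always works (this gives (i) directly without any Hessian control), (ii) then follows from (i), and only the uniform bound in (iii) actually requires $x^k\in L(f(x^0))$ and the $\Psi$-confinement argument you outline; your induction is valid but the claimed ``mutual dependence'' is stronger than what is needed.
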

\begin{proof} i) It follows from \eqref{newton-direction} and $H^k\succeq0$  that
\bea{lll} \label{grad-d-0}
\langle g^{k},{d}^k\rangle =  -\langle (H^k+ \beta\|g^{k}\| ^{\delta}I) d^k, d ^k\rangle
 \leq    -\beta\|g^{k}\| ^{\delta} \|d^k\|^2
 < 0.
\eda
Therefore, we always have $j_k<+\infty$ such that (\ref{armijo-nm}), which derives $f( x^{k}) > f( x^{k+1})$.

ii) The assertion follows
$f( x^{k}) \leq f( x^{K})\leq f( x^0)$ for any $ k\geq K.$ immediately.

iii) For every $k\geq K$ such that $\|g^{k}\| \geq \epsilon_N$, we denote
\begin{eqnarray*}
             \begin{array}{lll}
\omega_k:=\frac{G(\psi({x}^k))}{2(1-\mu_{2} )\beta\|g^{k}\| ^{\delta}}\qquad {\rm and} \qquad\overline\omega:=\frac{G(\Psi(x^0))}{2(1-\mu_{1} ) \beta \epsilon_N^{\delta}}
             \end{array}
 \end{eqnarray*}
and take
\begin{eqnarray}\label{def-jk}
j_k=\left\{
             \begin{array}{lll}
              0, & \mbox{if}~\omega_k  \leq 1,\\[1.0ex]
             - \lfloor \log_{\alpha_2}(\omega_k)\rfloor, & \mbox{if}~\omega_k  > 1.
             \end{array}
        \right.
 \end{eqnarray}
 By \eqref{def-psi-G-bound} and ii), we obtain $G(\psi(x^k))\leq G(\Psi(x^0))<+\infty$, which by $\|g^{k}\| \geq \epsilon_N$ delivers $\omega_k   \leq \overline\omega  <+\infty.$
If $\omega_k\leq 1$, then $j_k=0$ and $\tau_k=\alpha_2^{j_k}=1$, thereby leading to
\begin{eqnarray}\label{lktauk}\tau_kG(\psi({x}^k))\leq{}2(1-\mu_{2} )\beta\|g^{k}\| ^{\delta}:=c_k.\end{eqnarray}
 If $\omega_k>1$, then from \eqref{def-jk},
\bea{ll}
\tau_k=\alpha_2^{j_k}=\alpha_2^{- \lfloor \log_{\alpha_2}(\omega_k)\rfloor}
\leq\alpha_2^{-  \log_{\alpha_2}(\omega_k) }= \frac{1}{\omega_k}=\frac{ c_k}{G(\psi({x}^k))},
\nnb\eda
which also leads to (\ref{lktauk}). Moreover, $\tau_k=\alpha_2^{j_k}>0$ due to $\omega_k<+\infty$.

 By   $H^k\succeq0$, \eqref{newton-direction} enables us to derive that
\bea{lll}
\|g^{k}\| =   \|(H^k+ \beta\|g^{k}\| ^{\delta}I){d}^k\|
  \geq   \beta\|g^{k}\| ^{\delta}\|d^k\|.
 \nnb\eda
The above relationship suffices to
\bea{c}\label{d-gradient}
\|d^k\|\leq \beta^{-1} \|g^{k}\| ^{1- \delta},
\eda
 thereby giving rise to
\begin{eqnarray}
\begin{array}{lllll}
\|x_t^k\| &= & \|{x}^k + t\tau_k {d}^k\| \\[1.2ex]
 & \leq& \|{x}^k\| +  \| {d}^k\|&(\text{by $t\in(0,1),0<\tau_k\leq \alpha_2<1$})\\[1.25ex]
 & \leq& \|{x}^k\| + \beta^{-1} \|g^{k}\| ^{1-\delta} &(\text{by \eqref{d-gradient}})\\[1.25ex]
 &=& \psi (x^k)  \leq  \Psi (x^0), &(\text{by \eqref{def-psi-G} and  $x^k \in L(f(x^0))$})
 \end{array} \nnb
\end{eqnarray}
where $x_t^k:={x}^k + t \tau_k  {d}^k $
 for some $t\in(0,1)$. This indicates that
 \bea{c}\label{xi-k-G}
\|\nabla^2f(x_t^k)\| \leq   \sup_{\|x\| \leq \psi(x^k)} \|\nabla^2 f(x)\|  = G(\psi(x^k)) \leq G(\Psi(x^0))<+\infty.
\eda
Direct calculation can yield the following relationships
\begin{eqnarray}\label{tau-grad-d}
\begin{array}{lllll}
\tau_k  \langle g^{k}, {d}^k \rangle
&=&
\mu_{2}\tau_k  \langle g^{k}, {d}^k \rangle+(1-\mu_{2})\tau_k  \langle g^{k}, {d}^k \rangle\\[1.25ex]
&\leq&\mu_{2} \tau_k  \langle g^{k}, {d}^k \rangle-(1-\mu_{2} )\tau_k  \beta\|g^{k}\| ^{\delta} \|d^k\|^2 ~~&(\text{by \eqref{grad-d-0}})\\[1.25ex]
&=&\mu_{2} \tau_k  \langle g^{k}, {d}^k \rangle
-  (c_k  { \tau_k }/{2})\| {d}^k \| ^2.&(\text{by \eqref{lktauk}})
 \end{array}
\end{eqnarray}
 Now we conclude from Taylor's expansion   that
\bea{llll}\label{arm-half-k}
f({x}^k +\tau_k  {d}^k )
&=&f({x}^k )+
\tau_k  \langle g^{k}, {d}^k \rangle+({ \tau_k ^2}/{2})\langle{}\nabla^2f(x_t^k ) {d}^k , {d}^k \rangle \\
&\leq&f({x}^k )+
\mu_{2}\tau_k \langle g^{k}, {d}^k \rangle -  (c_k  { \tau_k }/{2})\| {d}^k \| ^2 + ({ \tau_k ^{2}}/{2})\langle{}\nabla^2f(x_t^k ) {d}^k , {d}^k \rangle &(\text{by \eqref{tau-grad-d}})\\
&\leq&f({x}^k )+\mu_{2} \tau_k \langle g^{k}, {d}^k \rangle
- ( c_k  - \tau_k  G(\psi(x^k))   )({ \tau_k }/{2})\| {d}^k \| ^2 &(\text{by \eqref{xi-k-G})} \\
&\leq&f({x}^k )+\mu_{2} \tau_k \langle g^{k}, {d}^k \rangle. &(\text{by \eqref{lktauk}})
\eda
resulting in (\ref{armijo-nm}). Finally,
it follows from $\omega_k   \leq \overline\omega  <+\infty$ that $\left\lfloor\log_{\alpha_2}\left(\omega_k\right)\right\rfloor
 \geq \left\lfloor\log_{\alpha_2}\left(\overline\omega\right)\right\rfloor$,
because of this and \eqref{def-jk}, we obtain
\bea{c}
 j_k\leq  \max\left\{0, -\left\lfloor\log_{\alpha_2}\left(\overline\omega\right)\right\rfloor\right\}=J_2<+\infty,
 \nnb\eda
displaying  \eqref{armijo-nm-eps}. Inequality \eqref{armijo-nm-eps-decrease} follows $\tau_k\geq \alpha_2^{J_2}$ and $\langle g^{k},{d}^k\rangle  < 0$ immediately.
\end{proof}
\noindent Now we are ready to claim that the gradient eventually vanishes by the following lemma.

\begin{lemma}\label{sandle-point}If Assumption \ref{sample-RIP-general} holds, then  $\lim_{k\to\infty}\|g^{k}\| =\lim_{k\to\infty} \| {x}^{k+1}- {x}^k\| =0.$
\end{lemma}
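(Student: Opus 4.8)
The plan is to run the standard ``sufficient decrease plus line-search lower bound'' argument, exploiting that almost all of the technical work is already packaged in Lemma \ref{armi-exist-n}. First I would record that $\{f(x^k):k\geq K\}$ is non-increasing by Lemma \ref{armi-exist-n}(i) and bounded below by $0$, hence convergent; telescoping the decrements then gives $\sum_{k\geq K}\big(f(x^k)-f(x^{k+1})\big)<+\infty$. Combining the Armijo rule \eqref{armijo-nm} with the descent estimate \eqref{grad-d-0} yields
\[
f(x^k)-f(x^{k+1})\ \geq\ -\mu_{2}\tau_k\langle g^k,d^k\rangle\ \geq\ \mu_{2}\beta\,\tau_k\|g^k\|^{\delta}\|d^k\|^2\ \geq\ 0,
\]
so that $\sum_{k\geq K}\tau_k\|g^k\|^{\delta}\|d^k\|^2<+\infty$ and in particular $\tau_k\|g^k\|^{\delta}\|d^k\|^2\to0$.

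Next I would convert this into information about $\|g^k\|$ itself. From the definition of the Newton direction \eqref{newton-direction}, $g^k=(H^k+\beta\|g^k\|^{\delta}I)d^k$, whence $\|g^k\|\leq(\|H^k\|+\beta\|g^k\|^{\delta})\|d^k\|$. Since $\{x^k:k\geq K\}\subseteq L(f(x^0))$ is bounded by Lemma \ref{armi-exist-n}(ii), Assumption \ref{sample-RIP-general} gives $\|H^k\|\leq 2\bar{\lambda}\|x^k\|^2$ and Lemma \ref{fgh-bound} bounds $\|g^k\|$ uniformly; consequently $\|H^k\|+\beta\|g^k\|^{\delta}\leq C$ for some finite constant $C$, and therefore $\|d^k\|\geq\|g^k\|/C$ for all $k\geq K$.

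I would then finish by contradiction. Suppose $\|g^k\|\not\to0$; then there exist $\epsilon>0$ and an infinite set of indices along which $\|g^k\|\geq\epsilon$. Along this subsequence Lemma \ref{armi-exist-n}(iii) supplies a uniform bound $j_k\leq J_2<+\infty$, hence a uniform step-size lower bound $\tau_k\geq\alpha_2^{J_2}>0$; together with $\|g^k\|^{\delta}\geq\epsilon^{\delta}$ and $\|d^k\|\geq\epsilon/C$ this forces $\tau_k\|g^k\|^{\delta}\|d^k\|^2\geq\alpha_2^{J_2}\epsilon^{\delta}(\epsilon/C)^2>0$ along the subsequence, contradicting $\tau_k\|g^k\|^{\delta}\|d^k\|^2\to0$. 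Hence $\|g^k\|\to0$. The displacement claim is then immediate: by \eqref{d-gradient} and $0<\tau_k\leq1$,
\[
\|x^{k+1}-x^k\|=\tau_k\|d^k\|\leq\|d^k\|\leq\beta^{-1}\|g^k\|^{1-\delta}\longrightarrow0,
\]
since $1-\delta>0$.

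The main obstacle is not any single estimate but the need to keep the line-search step sizes bounded away from zero precisely on the ``bad'' subsequence where $\|g^k\|\geq\epsilon$; this is exactly the role of Lemma \ref{armi-exist-n}(iii), and once $\epsilon$ is fixed the bound $J_2=J_2(\epsilon)$ is a finite constant. The only remaining care is to verify that $\|H^k\|+\beta\|g^k\|^{\delta}$ admits a uniform bound $C$ over the compact level set, so that the lower bound $\|d^k\|\geq\|g^k\|/C$ holds with a single constant independent of $k$.
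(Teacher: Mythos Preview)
Your proposal is correct and follows essentially the same route as the paper: a contradiction argument that fixes $\epsilon>0$, invokes Lemma~\ref{armi-exist-n}(iii) to bound $\tau_k$ below on the indices where $\|g^k\|\geq\epsilon$, telescopes the Armijo decrements, and then deduces $\|x^{k+1}-x^k\|\to0$ from \eqref{d-gradient}. Your derivation of the lower bound $\|d^k\|\geq\|g^k\|/C$ (via $\|H^k\|+\beta\|g^k\|^{\delta}\leq C$ on the bounded level set) is exactly what is needed to turn the decrement into a uniform positive quantity on the bad subsequence; the paper's proof cites \eqref{d-gradient} at the analogous step, which is an \emph{upper} bound on $\|d^k\|$ and points the wrong way, so your version is in fact the cleaner one.
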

\begin{proof} Suppose $\lim_{k\to\infty}\|g^{k} \| \neq 0$, namely,  $\limsup \|g^{k}\| >0$.
Let
\bea{c}
{\epsilon}:=\frac{1}{2}\limsup_{k\to\infty}\|g^{k}\|, \qquad\mathcal{I}_{\epsilon}(k):=\left\{\ell\in\{K,K+1,\ldots,k\}:  \|g^{\ell}\| \geq{\epsilon}\right\},
\nnb\eda
and $|\mathcal{I}_{\epsilon}(k)|$ denote the cardinality of set $\mathcal{I}_{\epsilon}(k)$.  Than we have
\begin{eqnarray} \label{fact-I-infty}\lim_{k\to\infty}|\mathcal{I}_{\epsilon}(k)|=\infty.\end{eqnarray}
due to  $\limsup \|g^{k}\| >0$.  It follows from Lemma \ref{armi-exist-n} 1) that, for any $\ell\geq K$, \begin{eqnarray} \label{fact-decrease}f(x^{\ell})\geq f(x^{\ell+1}).\end{eqnarray}
Now for any $\ell\in\mathcal{I}_{\epsilon}(k)$, it has $\|g^{\ell}\| \geq{\epsilon}$, which allows us to derive that
\bea{llll} \label{fact-decrease-d}
f({x}^{\ell+1})&\leq&f({x}^\ell)+
\mu_{2} \alpha_2^{J_2}\langle g^\ell,{d}^\ell\rangle
&(\text{by \eqref{armijo-nm-eps-decrease}})\\
&\leq&f({x}^\ell)
- \mu_{2} \alpha_2^{J_2}\beta\|g^{l} \| ^{\delta}\| {d}^\ell\| ^2\qquad&(\text{by \eqref{grad-d-0}})\\
&\leq&f({x}^\ell)
- \mu_{2} \alpha_2^{J_2} \beta^{-1}\|g^\ell\| ^{2-\delta}&(\text{by \eqref{d-gradient}})\\
&\leq&f({x}^\ell)
- \mu_{2} \alpha_2^{J_2} \beta^{-1} \epsilon^{2-\delta}.
\eda
Using these facts enables to generate the following chain of inequalities
\bea{llll}
f( {{x}}^{K})-f( {{x}}^{k+1})&=&\sum_{\ell=K}^{k} (f( {{x}}^{\ell})-f( {{x}}^{\ell+1}) )\\
&\geq&\sum_{\ell\in\mathcal{I}_{\epsilon}(k)} (f( {{x}}^{\ell})-f( {{x}}^{\ell+1}) )\qquad~~(\text{by \eqref{fact-decrease}})\\
&\geq&\sum_{\ell\in\mathcal{I}_{\epsilon}(k)}\mu_{2} \alpha_2^{J_2} \beta^{-1} \epsilon^{2-\delta}  \qquad\qquad(\text{by \eqref{fact-decrease-d}}) \\
&=&|\mathcal{I}_{\epsilon}(k)|\mu_{2} \alpha_2^{J_2} \beta^{-1} \epsilon^{2-\delta}.
\nnb\eda
We then conclude from \eqref{fact-I-infty} and the above inequality that
\bea{c}\lim_{k\to\infty}f( {x}^{k+1})=-\infty.\nnb\eda
This contradicts $f(x)\geq0$, and hence $\lim_{k\to\infty}\|g^{k} \| = 0$. Again by \eqref{d-gradient},  it follows \bea{c} \| {x}^{k+1}- {x}^k\|
=\tau_k\| {d}^k\| \leq \alpha_2 \beta^{-1}\|g^{k} \| ^{1-\delta} \rightarrow 0,
\nnb\eda
proving the whole lemma.
\end{proof}

\subsubsection{Convergence properties for Algorithm \ref{algorithm 0}}
In this subsection, we provide the main convergence results, including the sequence convergence and the convergence rate. 
{ To proceed with that, we need the following lemma from \cite{Absil}.
\begin{lemma}[Theorem 3.2, in \cite{Absil}]\label{who-con-lemma}
Let \( h : \mathbb{R}^p \to \mathbb{R} \) be an analytic function and  sequence  \(\{\tilde{x}_{k}\}\) satisfy the following strong descent condition
\bea{lll}
f(x^k)-f(x^{k+1})\geq\mu\|g^k \|\|x^{k+1}-x^{k}\|
\nnb\eda
for all $k$ and for some $\mu>0$.
Then, either $\lim_{k\to\infty}\|\tilde{x}_k\|=+\infty$ or there exists a single point $\tilde{x}$ such that $\lim_{k\to\infty}\tilde{x}_k=\tilde{x}.$
\end{lemma}
The above lemma enables us to derive the whole sequence convergence which does not rely on the initial points, as stated below.
\begin{theorem}[Sequence convergence]\label{convergence}Suppose that Assumption \ref{sample-RIP-general} holds.
Let $\{ {x}^{k}:k\geq0\}$ be the sequence generated by Algorithm \ref{algorithm 0}. Then the whole sequence converges to a stationary point.
\end{theorem}
\begin{proof}Lemmas \ref{first-alg-finite} and \ref{armi-exist-n}  showed that sequence $\{ {x}^{k}: k\geq0\}  \subseteq L(f(x^0))$ and Lemma \ref{obj-coer1} stated $L(f(x^0))$ is compact. Therefore, there exists a  subsequence of $\{ {x}^{k}\}$ that converges to $\bar{x}$.  By Lemma \ref{sandle-point}, we have $\nabla{}f( \bar{x})=0$. It is clear that  function $f$ is real polynomial, hence real analytic. To get the whole sequence convergence, based  on Lemma \ref{who-con-lemma} it suffices to prove the so-called strong descent condition. Notice that
 inequality (\ref{armijo-sg}) implies that
\bea{lll}
f(x^k)-f(x^{k+1})\geq\mu_{1} \alpha_1^{j_k}\|g^k \| ^2=\mu_{1}\|g^k \| \|x^{k+1}-x^{k}\|,~~ k=1,2,\ldots,K.
\nnb\eda
Now we conclude that
\bea{llll}
\tau_k\langle -g^k,{d}^k\rangle&=&\tau_k\langle g^k,(H^k+\beta\|g^{k}\| ^{\delta}I )^{-1}g^{k}\rangle\\
&\geq&\tau_k\beta^{-1}\|g^{k}\|^{2-\delta} \qquad\qquad\qquad\qquad~&(\text{by $\lambda_{\mathrm{min}}(H^k+\beta\|g^{k}\|^{\delta}I)^{-1}\geq\beta^{-1}\|g^{k}\|^{-\delta}$})\\
&\geq&\tau_k\|g^{k}\| \|d^{k}\|=\|g^k \| \|x^{k+1}-x^{k}\|~&(\text{by \ref{d-gradient}})
\nnb\eda
which together with \eqref{armijo-nm-eps-decrease} yields that
\bea{lll}
f(x^k)-f(x^{k+1})\geq\mu_2\|g^k \| \|x^{k+1}-x^{k}\|,~~k=K+1,K+2,\ldots.
\nnb\eda
Therefore, the strong descent condition holds with $\mu=\min\{\mu_1,\mu_2\}$, completing the proof.
\end{proof}}

The following result shows that after Algorithm \ref{algorithm 0} generates a solution  $\bar{x}$, which  necessarily is a stationary point, we can check a simple condition to see whether it satisfies the positive definiteness of $\nabla^2f( \bar{x})$. To proceed with that, we observe from Assumption  \ref{sample-RIP-general} that, for any $x$ and $v$,
\bea{llll} \label{min-lambda-H}
 \left\langle  v, \frac{2}{n}{\bf A}(x)^T{\bf A}(x) v \right\rangle  &=&\frac{2}{n}\sum_{i=1}^n \left\langle  v,  A_ix  \right\rangle^2  \geq  2{\lambda}\|v\|^2\|x\|^2.
\eda
%

\begin{lemma}\label{stepsize-equ1}Suppose that Assumption  \ref{sample-RIP-general} holds.
Let $\{ {x}^{k}:k\geq0\}$ be the sequence generated by Algorithm \ref{algorithm 0} with $\mu_2\in(0,1/2)$. If  {  limit point} $ \bar{x}$ satisfies
\bea{lll}\label{hessian-min} \|\frac{1}{n}\sum_{i=1}^n \varphi_i(\bar{x})A_i\| \leq \left(2-\frac{8}{5-4\mu_2^2} \right){\lambda} \|\bar x\|^2,\eda
   then for sufficiently large $k$,  $\tau_k=1$ in Phase II of  Algorithm \ref{algorithm 0}.
\end{lemma}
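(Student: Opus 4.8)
The plan is to prove that, for all sufficiently large $k$, the unit step $\tau_k=1$ (i.e. $j_k=0$) already passes the line search \eqref{armijo-nm} of Phase II. I would proceed in three stages and then isolate the delicate point.

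\emph{Stage 1 (convergence of the whole sequence).} I would first note that $\mu_2\in(0,1/2)$ forces $0<2-\tfrac{8}{5-4\mu_2^2}<2$, so that for $\bar x\neq0$ condition \eqref{hessian-min} is strictly stronger than \eqref{suff-cod}. Corollary~\ref{convergence-cor} then applies, giving $\nabla^2 f(\bar x)\succ0$ together with convergence of the \emph{whole} iterate sequence $x^k\to\bar x$. By Lemma~\ref{sandle-point}, $\|g^k\|\to0$, and \eqref{d-gradient} yields $\|d^k\|\le\beta^{-1}\|g^k\|^{1-\delta}\to0$; hence every segment point $\xi^k:=x^k+t_kd^k$ with $t_k\in(0,1)$ also satisfies $\xi^k\to\bar x$.

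\emph{Stage 2 (reduction of the unit step to a quadratic-form inequality).} Writing $M^k:=H^k+\beta\|g^k\|^\delta I$, relation \eqref{newton-direction} gives $g^k=-M^kd^k$, so $\langle g^k,d^k\rangle=-\langle d^k,M^kd^k\rangle$. An exact second-order Taylor expansion of the (polynomial, hence smooth) $f$ gives $f(x^k+d^k)=f(x^k)+\langle g^k,d^k\rangle+\tfrac12\langle d^k,\nabla^2 f(\xi^k)d^k\rangle$ for some $\xi^k$ on $[x^k,x^k+d^k]$. Substituting, the Armijo test \eqref{armijo-nm} with $\tau=1$ becomes
\[
\tfrac12\langle d^k,\nabla^2 f(\xi^k)d^k\rangle\ \le\ (1-\mu_2)\,\langle d^k,M^kd^k\rangle .
\]

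\emph{Stage 3 (asymptotic verification).} I would decompose $\nabla^2 f(\xi^k)=\tfrac2n{\bf A}(\xi^k)^T{\bf A}(\xi^k)+\tfrac1n\sum_{i=1}^n\varphi_i(\xi^k)A_i$ via \eqref{grad-hess}. The Lipschitz-type estimate underlying Lemma~\ref{hessian-lip} with $\xi^k-x^k=t_kd^k\to0$ gives $\|\tfrac2n{\bf A}(\xi^k)^T{\bf A}(\xi^k)-H^k\|\to0$, and continuity of $x\mapsto\tfrac1n\sum_i\varphi_i(x)A_i$ gives $\|\tfrac1n\sum_i\varphi_i(\xi^k)A_i\|\to\|\tfrac1n\sum_i\varphi_i(\bar x)A_i\|$. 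Thus the left side above is at most $\tfrac12\langle d^k,H^kd^k\rangle+\tfrac12\|\tfrac1n\sum_i\varphi_i(\bar x)A_i\|\,\|d^k\|^2+o(\|d^k\|^2)$, while the right side satisfies $\langle d^k,M^kd^k\rangle\ge\langle d^k,H^kd^k\rangle\ge2\underline{\lambda}\|x^k\|^2\|d^k\|^2$ by \eqref{min-lambda-H}. Because $\tfrac12-\mu_2>0$, moving $\tfrac12\langle d^k,H^kd^k\rangle$ to the right retains a strictly positive multiple of this lower bound; dividing by $\|d^k\|^2$ and letting $k\to\infty$ reduces the displayed inequality to the limiting requirement $\|\tfrac1n\sum_i\varphi_i(\bar x)A_i\|\le(2-4\mu_2)\underline{\lambda}\|\bar x\|^2$. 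Since $2-\tfrac{8}{5-4\mu_2^2}<2-4\mu_2$ on $(0,1/2)$, the stated threshold \eqref{hessian-min} guarantees this with a \emph{strict} margin, and that margin is exactly what lets the inequality persist for all large $k$, giving $\tau_k=1$.

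\emph{Main obstacle.} The delicate point is uniformity over the a~priori arbitrary directions $d^k$: each estimate must be phrased as a bound between quadratic forms $\langle v,\cdot\,v\rangle$ holding for all $v$, rather than as an eigenvalue bound on a single fixed matrix, and the asymptotically negligible contributions---the regularization $\beta\|g^k\|^\delta$ and the Hessian-continuity remainder---must be tracked relative to $\|d^k\|^2$ so that the \emph{strict} limiting inequality transfers into a genuine inequality at every sufficiently large $k$. The explicit slack between $2-\tfrac{8}{5-4\mu_2^2}$ and the sharp $2-4\mu_2$ is precisely what furnishes this strictness.
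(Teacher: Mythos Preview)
Your proposal is correct, and its three-stage architecture (whole-sequence convergence via Corollary~\ref{convergence-cor}; reduction of the unit-step Armijo test to a quadratic-form inequality; asymptotic verification) coincides with the paper's. The execution of Stage~3, however, is genuinely simpler than the paper's. The paper introduces auxiliary constants $a_1=2-\tfrac{8}{5-4\mu_2^2}$, $a_2=(\tfrac12-\mu_2)(1+\mu_2)(2-a_1)-a_1$, $a_3=1-\mu_2^2$, and closes the argument through an exact algebraic cancellation $(\tfrac12-\mu_2)a_3(2-a_1)=(1-\mu_2)(a_1+a_2)$; along the way it exploits the regularization term $\beta\|g^k\|^\delta$ explicitly to absorb the Hessian-Lipschitz remainder (via the inequality $2(1-\mu_2)\beta\|g^k\|^\delta\ge 3\bar\lambda(2\|x^k\|+\|d^k\|)\|d^k\|$ for large $k$) and invokes $\lambda_{\min}(\nabla^2 f(x^k))\ge a_3\lambda_{\min}(\nabla^2 f(\bar x))$. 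Your route bypasses all of this: you drop the regularization as a nonnegative contribution on the right-hand side, pass to the limit using continuity of $x\mapsto\tfrac{2}{n}{\bf A}(x)^T{\bf A}(x)$ and $x\mapsto\tfrac1n\sum_i\varphi_i(x)A_i$, and observe that the stated threshold $a_1$ lies strictly below the threshold $2-4\mu_2$ that your limiting inequality actually requires. This is cleaner and, incidentally, shows that the lemma already holds under the weaker hypothesis $\|\tfrac1n\sum_i\varphi_i(\bar x)A_i\|<(2-4\mu_2)\underline{\lambda}\|\bar x\|^2$; the paper's more laborious constant-tracking does not buy anything additional for this particular lemma.
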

\begin{proof}
For notational simplicity, denote \bea{llll}\label{step-size-1-fact-00}
a_1&:=& 2-\frac{8}{5-4\mu_2^2}\in(0,1)&(\text{by $\mu_2\in(0,1/2)$})\\
a_2&:=& \left(\frac{1}{2}-\mu_2\right)(1+\mu_2)(2-a_1)-a_1\\
a_3&:=& 1-\mu_2^2\in(0,1).&(\text{by $\mu_2\in(0,1/2)$})
\eda
Direct verification enables to show that
\begin{equation}\label{step-size-1-fact-01}
\begin{array}{l}
a_2= \left( \frac{1}{2}-\mu_2\right) (1+\mu_2)(2-a_1)-a_1 
> \left( \frac{1}{2}-\mu_2\right) \left( \frac{1}{2}+\mu_2\right) (2-a_1)-a_1 =0.
\end{array}\end{equation} 
{Notice that  $ \bar{x}$ is the limit point of $\{x^k\}$},  which together with $a_2>0$ delivers that
\bea{llll}\label{step-size-1-fact-0}
{\eta}\|x^k+\bar{x}\|\|x^k-\bar{x}\|\leq \frac{a_2{\lambda} \|\bar x\|^2}{n}
\eda
for sufficiently large $k$. Moreover, it follows from $\varphi_i({x}^k)=\varphi_i(\bar{x})+ \langle x^k-\bar{x},A_i(x^k+\bar{x})\rangle$ that
\bea{llll}\label{step-size-1-fact-4}
&&|\langle d^k,\frac{1}{n}\sum_{i=1}^n \varphi_i({x}^k)A_id^k\rangle|\\
&\leq&|\langle d^k,\frac{1}{n}\sum_{i=1}^n \varphi_i(\bar{x})A_id^k\rangle|+|\frac{1}{n}\sum_{i=1}^n\langle x^k-\bar{x},A_i(x^k+\bar{x})\rangle \langle d^k,A_id^k\rangle| \\
&\leq&|\langle d^k,\frac{1}{n}\sum_{i=1}^n \varphi_i(\bar{x})A_id^k\rangle|+\sqrt{\frac{1}{n}\sum_{i=1}^n\langle x^k-\bar{x},A_i(x^k+\bar{x})\rangle^2} \sqrt{\frac{1}{n}\sum_{i=1}^n\langle d^k,A_id^k\rangle^2} \\
&\leq&|\langle d^k,\frac{1}{n}\sum_{i=1}^n \varphi_i(\bar{x})A_id^k\rangle|+ {\eta}\|x^k+\bar{x}\|\|x^k-\bar{x}\|\|d^k\|^2
\qquad (\text{by Assumption \ref{sample-RIP-general}})\\
&\leq& \frac{a_1{\lambda} }{n} \|\bar x\|^2\|d^k\|^2+\frac{a_2{\lambda} }{n}  \|\bar x\|^2\|d^k\|^2\qquad\qquad (\text{by \eqref{hessian-min}  and \eqref{step-size-1-fact-0}})\\
&=&  \frac{(a_1+a_2){\lambda} }{n} \|\bar x\|^2\|d^k\|^2.
\eda
Therefore, by $\nabla^2f(x^k)=H^k+\frac{1}{n}\sum_{i=1}^n \varphi_i({x}^k)A_i$ and the above condition, we get
\bea{lllll}\label{step-size-1-fact-2}
\langle g^{k},d^k\rangle&=&\mu_2\langle g^{k},d^k\rangle-(1-\mu_2)\langle -g^{k},d^k\rangle\\
&=&\mu_2\langle g^{k},d^k\rangle-(1-\mu_2)\langle  (H^k+\beta\|g^{k}\|^\delta I)d^k,d^k\rangle\qquad (\text{by \eqref{newton-direction}})\\
&=&\mu_2\langle g^{k},d^k\rangle-(1-\mu_2)\langle (\nabla^2f(x^k)- \frac{1}{n}\sum_{i=1}^n \varphi_i({x}^k)A_i+\beta\|g^{k}\|^\delta I)d^k,d^k\rangle\\
&\leq&\mu_2\langle g^{k},d^k\rangle-(1-\mu_2)\langle (\nabla^2f(x^k)+\beta\|g^{k}\|^\delta I)d^k,d^k\rangle+ \frac{(1-\mu_2)(a_1+a_2){\lambda} }{n}\|\bar x\|^2\|d^k\|^2.
\eda
Moreover, by the definition of $H^k$ and \eqref{min-lambda-H}, we have
\bea{llll} \label{min-lambda-Hk}
\lambda_{\min}(H^k) = \inf_{\|v\|=1}\left\langle  v, \frac{2}{n}{\bf A}(x^k)^T{\bf A}(x^k) v \right\rangle  \geq  2{\lambda} \|x^k\|^2,
\eda
which together with \eqref{newton-direction} enables us to derive that
\bea{lll}\label{gk-lower-bound}
\|g^{k}\| &=&   \|(H^k+ \beta\|g^{k}\| ^{\delta}I){d}^k\|\\
  &\geq & \lambda_{\min}(H^k+ \beta\|g^{k}\| ^{\delta}I) \|{d}^k\| \\
   &\geq&  \max\{\lambda_{\min}(H^k), \beta\|g^{k}\| ^{\delta}\} \|{d}^k\| \\
    &\geq& \max\{ 2{\lambda}\|x^k\|^2, \beta\|g^{k}\| ^{\delta}\} \|{d}^k\|.
\eda
This results in  $\lim_{k\to \infty} \|{d}^k\| \leq \lim_{k\to \infty}\beta^{-1}\|g^{k}\|^{1-\delta}  =0$,
and thus for sufficiently large $k$,
 \bea{llll}\label{step-size-1-fact-50}
&&2(1-\mu_2)\beta\|g^{k}\|^\delta - 3{\eta}(2\|x^k\|+\|d^k\|)\|d^k\|\\
&\geq&2(1-\mu_2)\beta (2{\lambda} \|x^k\|^2\|{d}^k\|)^\delta- 3{\eta}(2\|x^k\|+\|d^k\|) \|d^k\|~~(\text{by \eqref{gk-lower-bound}})\\
&=&\left[ 2(1-\mu_2)\beta (2{\lambda} \|x^k\|^2)^\delta  - 3{\eta}(2\|x^k\|+\|d^k\|) \|d^k\|^{1-\delta} \right]  \|{d}^k\|^\delta\\
&\geq&0.~~(\text{by $x^k \to \bar x \neq 0$ and $d^k \to 0$})
\eda
 Now for any given $t\in(0,1)$,  using the above condition  yields
\bea{llll}\label{step-size-1-fact-5}
 &&\frac{1}{2} \langle d^k,  \big(\nabla^2f(x^k+t d^k)-\nabla^2f(x^k)\big)d^k\rangle\\
&\leq& \frac{1}{2} \|\nabla^2f(x^k+t d^k)-\nabla^2f(x^k)\|\|d^k\|^2\\
&\leq& \frac{3}{2} {\eta}(\|x^k+t d^k\|+\|x^k\|)\|x^k+td^k-x^k\|\|d^k\|^2&(\text{by Lemma \ref{hessian-lip}})\\
&\leq& \frac{3}{2} {\eta}(2\|x^k\|+\|d^k\|) \|d^k\|^3 \\
&\leq& (1-\mu_2)\beta\|g^{k}\|^\delta\|d^k\|^2. &(\text{by \eqref{step-size-1-fact-50}})\eda
{Since $0<a_1<1$, we conclude from condition \eqref{hessian-min} that for any $ v(\neq0)\in\mathbb{R}^p$,
\bea{llll}\label{cor-positive}
 \langle v, \nabla^2f( \bar{x}) v \rangle
&=&\frac{2}{n}\langle v,  {\bf A}(\bar{x})^T{\bf A}(\bar{x}) v\rangle
+ \frac{1}{n} \langle v,  \sum_{i=1}^n \varphi_i(\bar{x})A_iv\rangle&~~(\text{by \eqref{grad-hess}})\\
&\geq& {2{\lambda}}  \|v\|^2\|\bar x\|^2
- \frac{1}{n} \|v\|^2 \|\sum_{i=1}^n \varphi_i(\bar{x})A_i\|&~~(\text{by \eqref{min-lambda-H}})\\
&=&\|v\|^2 \left(2{\lambda} \|\bar x\|^2 - \|\frac{1}{n}\sum_{i=1}^n \varphi_i(\bar{x})A_i\|\right)
>0.&~~(\text{by \eqref{hessian-min}})\\
\eda}
To conclude the conclusion, we need the last fact:
 \bea{llll}\label{step-size-1-fact-6}
 \lambda_{\mathrm{min}}(\nabla^2f(x^k))&\geq& a_3\lambda_{\mathrm{min}}(\nabla^2f(\bar{x}))& (\text{by $\lim_{k\rightarrow \infty}x^k=\bar{x}$  and $ a_3\in(0,1)$})\\
 &\geq&\frac{ a_3}{n} (2{\lambda}  \|\bar x\|^2
-  \|\sum_{i=1}^n \varphi_i(\bar{x})A_i\|)&(\text{by \eqref{cor-positive}})\\
& >& \frac{a_3}{n}(2 {\lambda}  \|\bar x\|^2
- a_1{\lambda}  \|\bar x\|^2)&(\text{by \eqref{hessian-min}})\\
& = & \frac{  a_3(2-a_1){\lambda}  }{n} \|\bar x\|^2>0. & (\text{by $a_1\in(0,1)$})
\eda
 Now, we have the following chain of inequalities
\bea{llll}
&& f( x^k+d^k)-f(x^k)\\
&=&\langle g^{k},d^k\rangle
+ \frac{1}{2} \langle d^k,  \nabla^2f(x^k+t d^k)d^k\rangle\\
&=& \langle g^{k},d^k\rangle+\frac{1}{2} \langle d^k,  \nabla^2f(x^k)d^k\rangle
+ \frac{1}{2} \langle d^k,  \big(\nabla^2f(x^k+t d^k)-\nabla^2f(x^k)\big)d^k\rangle\\
&\leq& \langle g^{k},d^k\rangle+\frac{1}{2} \langle d^k,  \nabla^2f(x^k)d^k\rangle + (1-\mu_2)\beta\|g^{k}\|^\delta\|d^k\|^2
 ~&(\text{by \eqref{step-size-1-fact-5}}) \\
&\leq& \mu_2\langle g^{k},d^k\rangle-(\frac{1}{2} -\mu_2)\langle \nabla^2f(x^k)d^k,d^k\rangle+\frac{(1-\mu_2)(a_1+a_2){\lambda} }{n} \|\bar x\|^2 \|d^k\|^2 ~&(\text{by \eqref{step-size-1-fact-2}})\\
&\leq& \mu_2\langle g^{k},d^k\rangle-\left[ (\frac{1}{2} -\mu_2) a_3(2-a_1) -(1-\mu_2)(a_1+a_2)\right] \frac{ {\lambda}}{n}   \|\bar x\|^2\|d^k\|^2 ~&(\text{by \eqref{step-size-1-fact-6}})\\
&=& \mu_2\langle g^{k},d^k\rangle,
\nnb\eda
where the last equality used the fact that
\bea{llll}
&& \left( \frac{1}{2} -\mu_2\right)  a_3(2-a_1)-   (a_1+a_2)(1-\mu_2)\\
&=&(1-\mu_2)\left[ \left( \frac{1}{2} -\mu_2\right) (1+\mu_2)(2-a_1)-   (a_1+a_2)\right]\\
&=&0.\nnb
 \eda
 The above condition indicates that $\tau_k=1$ for sufficiently large $k$.
\end{proof}
The above lemma means that full Newton step size is eventually taken, namely, $x^{k+1}=x^k+d^k$ for sufficiently large $k$. This property enables us to derive the following convergence with rate $1+\delta$, which is at least superlinear.
\begin{theorem}[Convergence rate]\label{convergence-rate}Suppose that Assumption  \ref{sample-RIP-general} holds.
If  any one of its nonzero accumulating points $ \bar{x}$ of  the sequence generated by Algorithm \ref{algorithm 0} with  $ \mu_2\in(0,1/2)$ satisfies (\ref{hessian-min}),  then there are constants $c_1>0, c_2>0$ and $c_3\in(0,1/2)$ such that for sufficiently large {$k>K$},
\bea{llll}\label{super-linear-bound}
 \|x^{k+1}-\bar{x}\|\leq
 c_1\|x^k-\bar{x}\|^2+  c_2\|x^k-\bar{x}\|^{1+\delta}   +c_3\|x^k-\bar{x}\|.
 \eda
If
$\varphi_i(\bar{x})=0$ for all $i\in[n]$, then the whole sequence converges to $\bar{x}$ with rate $1+\delta$.
\end{theorem}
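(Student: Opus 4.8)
The plan is to reduce the iteration to a pure regularized Newton step and then control the associated error recursion with the Hessian Lipschitz estimate of Lemma~\ref{hessian-lip}. First I would invoke Lemma~\ref{stepsize-equ1}: since $\bar x$ satisfies \eqref{hessian-min} and $\mu_2\in(0,1/2)$, the unit step $\tau_k=1$ is accepted for all sufficiently large $k$, so the update reduces to $x^{k+1}=x^k-(M^k)^{-1}g^k$ with $M^k:=H^k+\beta\|g^k\|^\delta I$. Because $a_1:=2-\tfrac{8}{5-4\mu_2^2}\in(0,1)$, condition \eqref{hessian-min} implies \eqref{suff-cod}, so Corollary~\ref{convergence-cor} guarantees that the whole sequence converges to $\bar x$; in particular $e^k:=x^k-\bar x\to0$ and $g^k\to\nabla f(\bar x)=0$.

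Writing $e^{k+1}=(M^k)^{-1}(M^ke^k-g^k)$ and using $\nabla f(\bar x)=0$ together with the Hessian splitting $\nabla^2f=H+\tfrac1n\sum_i\varphi_iA_i$ from \eqref{grad-hess}, I would decompose
\[
M^ke^k-g^k=\big(\nabla^2f(x^k)e^k-g^k\big)-\tfrac1n\sum_{i=1}^n\varphi_i(x^k)A_ie^k+\beta\|g^k\|^\delta e^k,
\]
call these three summands (I), (II), (III), and bound each. For (I), I would write $g^k=\nabla f(x^k)-\nabla f(\bar x)=\int_0^1\nabla^2f(\bar x+te^k)e^k\,\mathrm{d}t$, subtract $\nabla^2f(x^k)e^k$, and apply the first inequality of Lemma~\ref{hessian-lip} to the integrand, obtaining $\|\text{(I)}\|=O(\|e^k\|^2)$. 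For (III), the second inequality of Lemma~\ref{hessian-lip} gives $\|g^k\|\le\|\nabla^2f(\bar x)\|\,\|e^k\|+O(\|e^k\|^2)=O(\|e^k\|)$, hence $\|g^k\|^\delta=O(\|e^k\|^\delta)$ and $\|\text{(III)}\|=O(\|e^k\|^{1+\delta})$. The prefactor is controlled from below by \eqref{min-lambda-Hk}: $\lambda_{\min}(M^k)=\lambda_{\min}(H^k)+\beta\|g^k\|^\delta\ge 2\underline{\lambda}\|x^k\|^2$, which is bounded away from $0$ since $x^k\to\bar x\neq0$. Dividing (I) and (III) by $\lambda_{\min}(M^k)$ then yields the $c_1\|e^k\|^2$ and $c_2\|e^k\|^{1+\delta}$ terms of \eqref{super-linear-bound}.

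The crux is term (II), which is only linear in $e^k$ and must be shown to carry a coefficient strictly below $1/2$. I would bound $\|\text{(II)}\|\le\tfrac1n\|\sum_i\varphi_i(x^k)A_i\|\,\|e^k\|$ and, dividing by $\lambda_{\min}(M^k)\ge2\underline{\lambda}\|x^k\|^2$, estimate the coefficient via
\[
\frac{\tfrac1n\|\sum_{i=1}^n\varphi_i(x^k)A_i\|}{2\underline{\lambda}\|x^k\|^2}\;\xrightarrow[k\to\infty]{}\;\frac{\tfrac1n\|\sum_{i=1}^n\varphi_i(\bar x)A_i\|}{2\underline{\lambda}\|\bar x\|^2}\;\le\;\frac{a_1}{2}\;<\;\frac12,
\]
where the limit uses the continuity of $\varphi_i$ with $x^k\to\bar x$, the first inequality is \eqref{hessian-min}, and the strict bound follows from $a_1\in(0,1)$. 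Hence for all large $k$ this coefficient is at most some $c_3\in(0,1/2)$, giving the third term of \eqref{super-linear-bound}; collecting the estimates for (I), (II), (III) produces the claimed inequality. I expect this coefficient-tracking step to be the main obstacle, since it is exactly where the somewhat delicate constant $a_1$ (engineered in the proof of Lemma~\ref{stepsize-equ1}) is used to force $c_3<1/2$.

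For the final assertion, if $\varphi_i(\bar x)=0$ for every $i\in[n]$, then $\varphi_i(x^k)=\varphi_i(x^k)-\varphi_i(\bar x)=\langle e^k,A_i(x^k+\bar x)\rangle$, so Assumption~\ref{sample-RIP-general} yields $\tfrac1n\|\sum_i\varphi_i(x^k)A_i\|=O(\|e^k\|)$ and term (II) becomes $O(\|e^k\|^2)$. It therefore merges into the quadratic term, and \eqref{super-linear-bound} collapses to $\|e^{k+1}\|\le C_1\|e^k\|^2+c_2\|e^k\|^{1+\delta}$. Since $\delta\in(0,1)$ and $e^k\to0$, the $\|e^k\|^{1+\delta}$ term dominates for large $k$, so $\|x^{k+1}-\bar x\|\le C\|x^k-\bar x\|^{1+\delta}$, i.e.\ the whole sequence converges with rate $1+\delta$.
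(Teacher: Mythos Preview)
Your argument is correct and follows the same overall strategy as the paper: invoke Lemma~\ref{stepsize-equ1} to force $\tau_k=1$, write the error recursion $e^{k+1}=(M^k)^{-1}(M^ke^k-g^k)$, and split the right-hand side into a quadratic piece, a $(1+\delta)$-power piece coming from the regularizer, and a linear piece whose coefficient is controlled by \eqref{hessian-min}. The one substantive difference is how $H^ke^k-g^k$ is handled. The paper computes this term \emph{exactly} from the polynomial structure of $f$, obtaining the closed form $H^ke^k-g^k=\tfrac1n\sum_i\langle e^k,A_ie^k\rangle A_ix^k-\tfrac1n\sum_i\varphi_i(\bar x)A_ie^k$, so that the linear part involves $\varphi_i(\bar x)$ directly and Lemma~\ref{hessian-lip} is never invoked here. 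You instead pass through $\nabla^2f(x^k)$, bound $\nabla^2f(x^k)e^k-g^k$ via the integral form and Lemma~\ref{hessian-lip}, and carry $\varphi_i(x^k)$ in the linear term before taking the limit $k\to\infty$. Both routes arrive at the same estimate; yours is a bit more generic (it would work for any $f$ with Lipschitz Hessian and a splitting $\nabla^2f=H+R$), while the paper's exact identity gives slightly sharper constants and makes the final $(1+\delta)$-rate claim immediate since the linear coefficient is exactly $C\|\tfrac1n\sum_i\varphi_i(\bar x)A_i\|$, which vanishes when $\varphi_i(\bar x)=0$ without any additional limiting argument.
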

\begin{proof}  {When $k>K$, the algorithm enters into phase II.} We note that
\bea{lllll}\label{def-theta-k}
\Theta^k:=H^k+\beta\|g^{k}\|^\delta I = \nabla^2f(x^k)-\frac{1}{n}\sum_{i=1}^n \varphi_i(x^k)A_i +\beta\|g^{k}\|^\delta I.
\eda
 { Condition \eqref{min-lambda-Hk}   results in}
\bea{llll}\lambda_{\min}(H^k)&\geq& 2{\lambda} \|x^k\|^2 \geq 2a_1 {\lambda} \|\bar x\|^2
\nnb\eda
due to $0<a_1<1$, where $a_1$ is defined as \eqref{step-size-1-fact-00}.  As a consequence, for any sufficiently large $k$,
\bea{llll}\label{linear-rate-1}
\|(\Theta^k)^{-1}\| \leq \frac{1}{\lambda_{\min}(H^k) + \beta\|g^{k}\|^\delta}   \leq \frac{1}{2a_1 {\lambda} \|\bar x\|^2} =:C.
\eda
Denote $\varpi^k:=x^k-\bar{x}$. It follows from the fact
\bea{llll}\varphi_i({x}^k)&=&\langle{x}^k, A_ix^k\rangle-\langle\bar{x}, A_i\bar{x}\rangle+\langle\bar{x}, A_i\bar{x}\rangle-b_i
=\langle \varpi^{k}, A_i(x^k+\bar{x})\rangle+\varphi_i(\bar{x}),
\nnb\eda
and $\nabla f(\bar{x})=0$ that
 \bea{llll}\label{int-bound}
H^k \varpi^{k}- g^k &=& H^k \varpi^{k}- g^k -\nabla f(\bar x) \\
&=&\frac{1}{n}\left( \sum_{i=1}^n 2A_ix^k(x^k)^TA_i\varpi^{k} -\varphi_i(x^k)A_ix^k-\varphi_i(\bar x)A_i\bar x\right) \\
&=&\frac{1}{n}\left( \sum_{i=1}^n 2\langle \varpi^k, A_ix^{k} \rangle A_ix^k -(\varphi_i(x^k)-\varphi_i(\bar x)) A_ix^k-\varphi_i(\bar x)A_i(x^k-\bar x)\right) \\
&=&\frac{1}{n}\left( \sum_{i=1}^n 2\langle \varpi^k, A_ix^{k} \rangle A_ix^k -\langle \varpi^{k}, A_i(x^k+\bar{x})\rangle A_ix^k-\varphi_i(\bar x)A_i\varpi^{k}\right) \\
 &=&\frac{1}{n}\left( \sum_{i=1}^n  \langle \varpi^k, A_i \varpi^k \rangle A_ix^k  -\varphi_i(\bar x)A_i\varpi^{k}\right),
 \eda
 Moreover, direct verification enables us to obtain
 \bea{llll}\label{skbound}
\|\sum_{i=1}^n  \langle \varpi^k, A_i \varpi^k \rangle A_i \| &=& \sup_{\|u\|=1} |\langle u,\frac{1}{n}\sum_{i=1}^n  \langle \varpi^k, A_i \varpi^k \rangle A_i u\rangle|\\
&=& \sup_{\|u\|=1} |\frac{1}{n}\sum_{i=1}^n\langle\varpi^{k}, A_i \varpi^{k} \rangle \langle u, A_iu\rangle |\\
&\leq&\sup_{\|u\|=1} \sqrt{\frac{1}{n}\sum_{i=1}^n\langle\varpi^{k}, A_i \varpi^{k} \rangle^2}\sqrt{ \frac{1}{n}\sum_{i=1}^n\langle u, A_iu\rangle^2}\\
&\leq&\sup_{\|u\|=1} {\eta} \|\varpi^{k}\|^2\|u\|^2 ~~~ (\text{by Assumption \ref{sample-RIP-general}})\\
&=&{\eta} \|\varpi^{k}\|^2.\nnb\eda
Using the above two facts and  $\|{x}^k\|\leq \|\bar{x}\|  +\|\varpi^k\| \leq 2\|\bar{x}\|$ by $\varpi^{k}\rightarrow 0$ enable us to derive that
  \bea{llll}\label{int-bound}
\|H^k \varpi^{k}- g^k\| &\leq& {\eta} \|\varpi^{k}\|^2\|x^k\|  +\|\frac{1}{n}\sum_{i=1}^n\varphi_i(\bar x)A_i\varpi^{k}\|\\
&\leq& 2{\eta}\|\bar x\| \|\varpi^{k}\|^2+\|\frac{1}{n}\sum_{i=1}^n\varphi_i(\bar x)A_i\|\|\varpi^{k}\|.
 \eda
Next, we estimate $\|g^k\|$. To proceed with that, one can show
\bea{llll}\label{gk-bound-1}
\|H^k \| &=& \sup_{\|u\|=1} |\langle u,\frac{2}{n}\sum_{i=1}^n    A_i x^k (x^k)^T A_i u\rangle|\\
&=& \sup_{\|u\|=1} |\frac{2}{n}\sum_{i=1}^n\langle u, A_i x^{k} \rangle^2 |\\
&\leq&\sup_{\|u\|=1} 2 {\eta} \|x^{k}\|^2\|u\|^2 ~~~ (\text{by Assumption \ref{sample-RIP-general}})\\
&\leq&3{\eta} \|\bar x\|^2.\qquad\qquad\qquad\quad~ (\text{by $x^{k} \to \bar x$})  \eda
Now, we derive $\|g^k\|$ from the above two facts that
  \bea{llll}\label{gk-bound}
\|g^k\| &\leq& \|H^k \varpi^{k}\|+ \|H^k \varpi^{k}- g^k\|  \\
&\leq& \|H^k\|\| \varpi^{k}\|+   2{\eta}\|\bar x\| \|\varpi^{k}\|^2+\|\frac{1}{n}\sum_{i=1}^n\varphi_i(\bar x)A_i\|\|\varpi^{k}\| ~ &(\text{by \eqref{skbound}})\\
&\leq& 3{\eta} \|\bar x\|^2 \| \varpi^{k}\|+  2{\eta}\|\bar x\| \|\varpi^{k}\|^2+ a_1{\lambda} \|\bar x\|^2\|\varpi^{k}\|~ &(\text{by \eqref{gk-bound-1}, \eqref{hessian-min}})\\
&\leq& 6{\eta} \|\bar x\|^2 \| \varpi^{k}\|.~ &(\text{by $\varpi^{k}\to 0$ and ${\lambda} \leq {\eta}$})
 \eda
By  Lemma \ref{stepsize-equ1} that $x^{k+1}= x^{k} +\tau_kd^k=x^{k} + d^k$ for sufficiently large $k$, we have 
\bea{llll}\label{def-c3}
 \|\varpi^{k+1}\|&=&\|d^k+\varpi^{k}\|=\|(\Theta^k)^{-1}(-g^k+\Theta^k \varpi^{k})\|\\
 &=&\left\|(\Theta^k)^{-1}\left( H^k \varpi^{k} - g^k+  \beta\|g^{k}\|^\delta   \varpi^{k}\right) \right\|~&(\text{by \eqref{def-theta-k}})\\
&\leq& C\left( \|H^k \varpi^{k} - g^k\|+  \beta\|g^{k}\|^\delta   \|\varpi^{k}\|\right) ~&(\text{by \eqref{linear-rate-1}}) \\
  &\leq& 2{\eta}C\|\bar{x}\|\|\varpi^{k}  \|^2+  C\beta \|g^{k}\|^\delta   \|\varpi^{k}\|+C\|\frac{1}{n}\sum_{i=1}^n \varphi_i(\bar{x})A_i\|\| \varpi^{k}\|~&(\text{by \eqref{int-bound}})\\
   &\leq& c_1 \|\varpi^{k}  \|^2+  c_2   \|\varpi^{k}\|^{1+\delta}+C\|\frac{1}{n}\sum_{i=1}^n \varphi_i(\bar{x})A_i\|\| \varpi^{k}\|.~&(\text{by \eqref{gk-bound}})
 \eda
where $c_1:=2{\eta}C\|\bar{x}\|$ and $ c_2:=C\beta (6{\eta} \|\bar x\|^2)^\delta$.  By \eqref{linear-rate-1} and \eqref{hessian-min}, one can see that
\bea{llll}
c_3:=C\|\frac{1}{n}\sum_{i=1}^n \varphi_i(\bar x)A_i\|\leq \frac{a_1 {\lambda} \|\bar x\|^2}{2a_1 {\lambda} \|\bar x\|^2}
=\frac{1}{2}.\nnb\eda
If $\varphi_i(\bar{x})=0$, then relation (\ref{hessian-min}) holds  and
\bea{llll}
\lim_{k\rightarrow\infty} \frac{\|\varpi^{k+1}\|}{\|\varpi^{k}\|^{1+\delta}}& \leq & 2{\eta}C\|\bar{x}\| + c_2,
\nnb\eda
 which shows that the convergence rate is $1+\delta$.
\end{proof}

\section{Oracle Convergence Analysis for GRNM}\label{sec:oracle-con}

As mentioned in Section \ref{sec:related works}, \cite{Thaker} stated that problem (\ref{LSE-PQMR}) has no spurious local minima with a high probability when $\{A_1,\ldots,A_n\}$ is a set of symmetric Gaussian random matrices.
An very interesting question that arises is whether the solver calculated by Algorithm \ref{algorithm 0} is fairly close to the true signal.
Inspired by  the work in \cite{ge17} and \cite{Thaker}, we study the relation between local minima and global minima when $\{A_1,\ldots,A_n\}$ satisfies Assumption \ref{sample-RIP-general}. Particularly, we will prove that the solver implemented by Algorithm \ref{algorithm 0} nears to true value $x^*$ under some mild conditions.

{
\begin{lemma}[Theorem 2.3, in \cite{tru22}]\label{gd-ami}
Let \( h : \mathbb{R}^p \to \mathbb{R} \) be a continuously differentiable function which satisfies the Kurdyka-Lojasiewicz inequality. For any \( w^0 \in \mathbb{R}^p \), construct sequence {$\{w^k:k\geq0\}$} by {\( w^{k+1} = w^k - \tau_k\nabla h(w^k) \)}, where step size $\tau_k$ is chosen by Armijo' rule. Let the hyperparameters in this gradient descent algorithm be randomly chosen.  Then, sequence {$\{w^k:k\geq0\}$} either diverges to infinity
or converges to a stationary point of $h$ which is not a strict saddle
point.
\end{lemma}
\begin{lemma}\label{convergence-local}Suppose that Assumption \ref{sample-RIP-general} holds, ${\eta}<3{\lambda}$, and $\kappa_*\leq {(3{\lambda}-{\eta})r_*^2}/{7}$.
Let $\{ {x}^{k}\}$ be the sequence generated by Algorithm \ref{algorithm 0} with setting a sufficiently large (but finite) $K_0$. Then the whole sequence converges to a unique local minimizer in $\mathcal{N}_*$.
\end{lemma}
\begin{proof} If we set $K_0=\infty$, then we get an infinite sequence {$\{x^k:k\geq0\}$}. To differ from the sequence generated by GRNM, we denote it by {$\{w^k:k\geq0\}$} updated by  {$w^{k+1} = w^k - \tau_k\nabla f(w^k)$}. Similar reasoning to show 
Lemma \ref{first-alg-finite} i) allows us to show that sequence $\{w^k:k\geq0\}\subseteq L(f(x^0))$ is bounded (i.e., $c:= \sup_{k\geq0}\|w^k\|<\infty$). Moreover, we note that $f$ is a continuously differentiable function and satisfies the Kurdyka-Lojasiewicz inequality. Therefore, according to Lemma \ref{gd-ami}, sequence {$\{w^k:k\geq0\}$}   converges to a stationary point $\bar{w}$ of $f$ which is not a strict saddle point. Hence we obtain $\nabla f(\bar{w})=0$ and $\nabla^2 f(\bar{w})\succeq0.$

Now consider a smaller radius $r=\sqrt{6/7}r_*$, thereby $\mathcal{N}_*(r)\subseteq \mathcal{N}_*$. The assumptions here reduce to the conditions in Proposition  \ref{strict-saddle}. Since $w^k\to\bar{w}$, there always is a sufficiently large integer $J<\infty$ such that  
\bea{llll}\sup_{k\geq J} \|\nabla f(w^{k})\|\leq  \epsilon(r),\qquad \sup_{k\geq J}  \|w^{k}-\bar{w}\|\leq \frac{(3{\lambda}-{\eta})\|x^*\|^2}{27{\eta}(c+\|\bar{w}\|)}.
\nnb\eda
  If $w^k\notin\mathcal{N}_*(r)$ for any $k\geq J$, then 
\bea{llll}
- {(3{\lambda}-{\eta})}\|x^*\|^2/8&\geq& \lambda_{\mathrm{min}}(\nabla^2f(w^k))&(\text{due to Proposition}~ \ref{strict-saddle})\\
&\geq&\lambda_{\mathrm{min}}(\nabla^2f(\bar{w}))-
 \|\nabla^2f(w^k)
-\nabla^2f(\bar{w})\|\\
&\geq& -
 \|\nabla^2f(w^k)
-\nabla^2f(\bar{w})\|~&(\text{due to}~\nabla^2 f(\bar{w})\succeq0)\\
&\geq& -3{\eta} 
\|w^k+\bar{w}\|\|w^k-\bar{w}\| ~&(\text{due to Lemma} ~\ref{hessian-lip})\\
&\geq& -3{\eta} (c+\|\bar{w}\|)\|w^k-\bar{w}\|\\
&\geq& - {(3{\lambda}-{\eta})}\|x^*\|^2/9,
\nnb\eda
which is a contradiction. Therefore, we have $w^k\in\mathcal{N}_*(r)$ for any $k\geq J$. The compactness of  set $\mathcal{N}_*(r)$ ensures $\bar{w}\in\mathcal{N}_*(r)\subseteq \mathcal{N}_*$. One can verify that \bea{llll}\kappa_*\leq \frac{(3{\lambda}-{\eta})r_*^2}{7} \leq \frac{2{\lambda} r_*^2}{7} \leq \frac{2{\lambda}}{7} \frac{\lambda^2\|x^*\|^2}{36 \eta^2} \leq   \frac{\lambda\|x^*\|^2}{2}   
\nnb\eda
due to $\lambda<\eta$. Hence, the first result of Lemma \ref{hessian-pd} implies that $f$ is a strong convex in $\mathcal{N}_*$, which indicates $\bar{w}$ is a unique local minimizer in $\mathcal{N}_*$.

By Theorem \ref{convergence},  whole sequence $\{x^k\}$ converges to a stationary point, denoted by $\bar{x}$. Therefore, given $r_*-r>0$, there is a finite integer $K_1>0$ such that
$$\|x^k-\bar{x}\|<(r_*-r)/2, ~~\forall k\geq K_1.$$
Similarly,  as $\{w^k\}$ converges to $\bar{w}$,  given $r_*-r>0$,  there is a finite integer $K_2>0$ such that
$$\|w^k-\bar{w}\|<(r_*-r)/2,~~\forall k\geq K_2.$$
Therefore, let $K_0=\max\{J, K_1,K_2\}$, then by $K\geq K_0$ we have
$$\|\bar{x}-\bar{w}\|\leq \|x^K-\bar{x}\|+\|x^K-\bar{w}\|= \|x^K-\bar{x}\|+\|w^K-\bar{w}\|<r_*-r,$$
where the equality is due to $x^k=w^k$ for any $k\in[K]$. The above fact contributes to
 \bea{lll}
 \min\{\|\bar{x}-x^*\|, \|\bar{x}+x^*\|\} &\leq& \min\{ \|\bar{x}-\bar{w}\| + \|\bar{w}-x^*\|, \|\bar{x}-\bar{w}\| + \|\bar{w}+x^*\|\}\\
 &=& \|\bar{x}-\bar{w}\| + \min\{  \|\bar{w}-x^*\|,   \|\bar{w}+x^*\|\} \leq r_*-r+r=r_*\nnb
 \eda
	due to $\bar{w}\in\mathcal{N}_*(r)$.  So $\bar{x}\in\mathcal{N}_*$. We note that $\bar{x}$ is a stationary point, and thus it is also a unique local minimizer in $\mathcal{N}_*$, thereby $\bar{x}=\bar{w}$.
\end{proof}}

\subsection{Analysis for noiseless model (\ref{PQMR})}
\begin{theorem}\label{oracle-noiseless}
Suppose that Assumption \ref{sample-RIP-general} holds and ${\eta}<3{\lambda}$.
Let $\{ {x}^{k}\}$ be the sequence generated by Algorithm \ref{algorithm 0} with setting a sufficiently large (but finite) $K_0$.
Then
 \begin{itemize}[leftmargin=20pt]
 \item[i)] the whole sequence converges to $x^*$ or $-x^*$, the true value of the noiseless model (\ref{PQMR}),
  \item[ii)]  and the convergence rate is $1+\delta$ if we set   $ \mu_2\in(0,1/2)$.
\end{itemize}
\end{theorem}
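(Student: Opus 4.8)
The plan is to show that the hypothesis $\varphi_i(\bar x)=0$ for every $i\in[n]$ makes the ``residual'' matrix $\frac1n\sum_{i=1}^n\varphi_i(\bar x)A_i$ appearing in the Hessian vanish identically, so that the conditional convergence results of the previous section apply to $\bar x$ with no further work. First I would dispatch part~i). Since each $\varphi_i(\bar x)=0$, that matrix is the zero matrix and its spectral norm is $0$, which is strictly smaller than $2\underline\lambda\|\bar x\|^2>0$ because $\bar x\neq 0$. Thus condition \eqref{suff-cod} of Corollary~\ref{convergence-cor} is met, giving $\nabla^2 f(\bar x)\succ 0$ and convergence of the whole sequence to the unique local minimizer $\bar x$.

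It then remains to identify $\bar x$ with $\pm x^*$. In the noiseless model $b_i=\langle x^*,A_ix^*\rangle$, so $\varphi_i(\bar x)=0$ reads $\langle\bar x,A_i\bar x\rangle=\langle x^*,A_ix^*\rangle$; using the symmetry of $A_i$ this rewrites as $\langle\bar x-x^*,A_i(\bar x+x^*)\rangle=0$ for every $i$. Squaring, averaging, and invoking the lower bound of Assumption~\ref{sample-RIP-general} yields
\[
0=\frac1n\sum_{i=1}^n\langle\bar x-x^*,A_i(\bar x+x^*)\rangle^2\geq\underline\lambda\,\|\bar x-x^*\|^2\|\bar x+x^*\|^2,
\]
so $\|\bar x-x^*\|\,\|\bar x+x^*\|=0$ and hence $\bar x=x^*$ or $\bar x=-x^*$. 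This is exactly the noiseless identity already obtained in Theorem~\ref{nonasym-bound}, now transported to the accumulation point of the algorithm.

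For part~ii) I would simply observe that the vanishing residual makes the quantitative condition \eqref{hessian-min} trivially satisfied: with $\mu_2\in(0,1/2)$ the coefficient $2-\tfrac{8}{5-4\mu_2^2}$ is strictly positive, whereas its left-hand side $\|\frac1n\sum_{i=1}^n\varphi_i(\bar x)A_i\|=0$. Theorem~\ref{convergence-rate} therefore applies; since moreover $\varphi_i(\bar x)=0$ for all $i$, its concluding clause gives convergence of the whole sequence to $\bar x=\pm x^*$ at rate $1+\delta$.

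There is no deep obstacle once $\varphi_i(\bar x)=0$ is assumed: the real content is recognizing that this hypothesis collapses the earlier \emph{conditional} statements (Corollary~\ref{convergence-cor} and Theorem~\ref{convergence-rate}) into unconditional ones. The single point I would want to scrutinize is the exact role of the extra assumption $\bar\lambda<3\underline\lambda$, since the chain above uses only the lower bound $\underline\lambda$. I suspect it is present to guarantee (in the background ``no spurious local minima'' discussion) that a nonzero accumulation point with $\varphi_i(\bar x)=0$ genuinely exists, or to keep the statement parallel with its noisy counterpart; I would confirm this when filling in the details rather than assume it is consumed by the noiseless estimate.
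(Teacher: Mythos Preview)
Your argument is correct but follows a different path from the paper. The paper establishes $\bar x=\pm x^*$ by contradiction through a Hessian calculation: assuming $\bar x\neq x^*$, it sets $x^-:=\bar x-x^*$, $x^+:=\bar x+x^*$, expands $\langle x^-,\nabla^2 f(\bar x)x^-\rangle$ using $\nabla f(\bar x)=0$ and the identity $4\bar x-x^-=x^-+2x^+$, and after some algebra obtains
\[
2\langle x^-,\nabla^2 f(\bar x)x^-\rangle=\frac{1}{n}\sum_{i=1}^n\bigl[\langle x^-,A_ix^-\rangle^2-3\langle x^-,A_ix^+\rangle^2\bigr].
\]
Assuming $\|x^-\|\le\|x^+\|$ and applying both bounds of Assumption~\ref{sample-RIP-general} gives $2\langle x^-,\nabla^2 f(\bar x)x^-\rangle\le -(3\underline\lambda-\bar\lambda)\|x^-\|^2\|x^+\|^2<0$, contradicting the positive definiteness from Corollary~\ref{convergence-cor}. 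This is precisely where the hypothesis $\bar\lambda<3\underline\lambda$ is consumed.

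Your route is shorter and, as you suspected, does not need $\bar\lambda<3\underline\lambda$: once $\varphi_i(\bar x)=0$ is assumed in the noiseless setting, the identification $\bar x=\pm x^*$ follows immediately from the lower isometry bound, exactly as in the proof of Theorem~\ref{nonasym-bound}. What the paper's longer argument buys is a template that carries over verbatim to the noisy theorem (Theorem~\ref{oracle-conv-noise}), where one only has condition~\eqref{suff-cod} rather than $\varphi_i(\bar x)=0$, and the Hessian-direction computation with the $\bar\lambda<3\underline\lambda$ bound becomes essential. Your handling of part~ii) via Theorem~\ref{convergence-rate} matches the paper's.
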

{
\begin{proof}
The first claim in Lemma \ref{hessian-pd} indicates $f$ is strongly convex in $\mathcal{N}_*$ and thus any point $\bar{x}\in\mathcal{N}_*$ such that $\nabla f(\bar{x})=0$ is the unique optimal solution to problem ${\rm min}_{x\in\mathcal{N}_*} f(x)$. In addition, $0\leq f(\bar{x})={\rm min}_{x\in\mathcal{N}_*} f(x)\leq f(x^*)=0$. Therefore, we have $\bar{x}=x^*$ or $\bar{x}=-x^*$. Then, the conclusion i) is a direct result of Lemma \ref{convergence-local}.  This together with  Theorem \ref{convergence-rate} results in conclusion ii) immediately as condition (\ref{hessian-min}) also holds.
\end{proof}
\noindent Based on Lemma \ref{sample-RIP}, we can set ${\eta}=(2+3\gamma )\sigma^2/2$ and ${\lambda}=(1-2\gamma)\sigma^2/2$ to ensure Assumption \ref{sample-RIP-general}, leading to ${\eta}<3{\lambda}$ if $\gamma\in(0,1/9)$. Therefore, a direct result of the above theorem and Lemma \ref{sample-RIP} is the following corollary.
\begin{corollary}\label{cor-noiseless}
Let $\{A_1,\ldots,A_n\}$ be generated as Setup \ref{ass-A-i} and $\gamma\in(0,1/9)$, and let $\{ {x}^{k}\}$ be the sequence generated by Algorithm \ref{algorithm 0} with setting a sufficiently large (but finite) $K_0$.  If condition (\ref{np-RIP}) holds, then with a high probability, then the whole sequence converges to the true value of noiseless model (\ref{PQMR}), and the convergence  rate is $1+\delta$ if we set   $ \mu_2\in(0,1/2)$.
\end{corollary}
}

\subsection{Analysis for noisy model (\ref{PQMR})}

{
\begin{theorem}\label{oracle-conv-noise}Suppose that Assumptions \ref{sample-RIP-general} and \ref{ass-error} hold and ${\eta}<3{\lambda}$.
Let $\{ {x}^{k}\}$ be the sequence generated by Algorithm \ref{algorithm 0} with setting a sufficiently large (but finite) $K_0$. If \bea{c}\label{nplowbound}\frac{n}{p\log(1+2n)}\geq  \frac{14^26^5\varsigma^2{\eta}^5}{(3{\lambda}-{\eta})^2{\lambda}^4\|x^*\|^4},\eda
 then with a probability at least $1-p_1-p_2$, the whole sequence converges to   $\bar{x}$ which is a global minimizer of (\ref{LSE-PQMR}) and satisfies
\bea{c}
\min\{\|\bar{x}-x^*\|,\|\bar{x}+x^*\|\}\leq\frac{4}{{\lambda}\|x^*\|}
\sqrt{\frac{6\varsigma    ^2{\eta}p\log(1+2n)}{n}}.
\nnb\eda
\end{theorem}}
\begin{proof}
{Recalling \eqref{def-N-r*} that $r_*=\lambda\|x^*\|/(6{\eta})$, we conclude from (\ref{nplowbound}) that \bea{lll}
2\sqrt{\frac{6\varsigma    ^2{\eta}p\log(1+2n)}{n}}
\leq\frac{3{\lambda}-{\eta}}{7}r_*^2,\\
\frac{4}{{\lambda}\|x^*\|}
\sqrt{\frac{6\varsigma    ^2{\eta}p\log(1+2n)}{n}} \leq \frac{2}{{\lambda}\|x^*\|} \frac{3{\lambda}-{\eta}}{7}r_*^2 = \frac{2}{{\lambda}\|x^*\|} \frac{3{\lambda}-{\eta}}{7} \frac{\lambda\|x^*\|}{6\eta} r_*\leq r_*.
 \nnb\eda By the second result of Lemma \ref{err-sup-ub}, we have
 \bea{lll}
\kappa_*\leq2\sqrt{\frac{6\varsigma    ^2{\eta}p\log(1+2n)}{n}}, \nnb\eda
with a probability at least $1-p_1-p_2$. This means that condition $\kappa_*\leq({3{\lambda}-{\eta}})r_*^2/7$ holds  with a probability at least $1-p_1-p_2$. Consequently,
Lemma \ref{convergence-local} implies that the sequence converges converges to a unique local minimizer $\bar{x}\in\mathcal{N}_*$. Recall Theorem \ref{nonasym-bound} that 
\bea{c}
\min\{\|\hat{x}-x^*\|,\|\hat{x}+x^*\|\}\leq\frac{4}{{\lambda}\|x^*\|}
\sqrt{\frac{6\varsigma    ^2{\eta}p\log(1+2n)}{n}}\leq r_*.
\nnb\eda
Therefore, $\hat{x}\in\mathcal{N}_*$. We note that there is only one local minimizer in $\mathcal{N}_*$, thereby $\bar{x}=\hat{x}$.}
\end{proof}

\begin{theorem}\label{oracle-conv-suplinear-noise}
Suppose that Assumptions \ref{sample-RIP-general} and \ref{ass-error} hold and ${\eta}<3{\lambda}$.
Let $\{ {x}^{k}\}$ be the sequence generated by Algorithm \ref{algorithm 0} with setting a sufficiently large (but finite) $K_0$ and $ \mu_2\in(0,1/2)$. If
\bea{c}\label{npto0}\frac{p^3\log(1+2n)}{n}\to0,\qquad\mbox{as}\qquad n\to\infty,\eda
then the whole sequence converges to $\bar{x}$ superlinearly in probability.
\end{theorem}
\begin{proof}
 It is easy to see that
\bea{lll}\|\frac{1}{n}\sum_{i=1}^n\varphi_i(\bar{x})A_i\|&\leq&\|\frac{1}{n}\sum_{i=1}^n\langle \bar{x}-x^*, A_i (\bar{x}+x^*) \rangle A_i\|+
\|\frac{1}{n}\sum_{i=1}^n\varphi_i(x^*) A_i\|.
\nnb\eda
 For the first term,
\bea{lll}\label{term-1}
\|\frac{1}{n}\sum_{i=1}^n\langle x^-, A_i x^+ \rangle A_i\|
&=&\sup_{\|u\|=1}|\frac{1}{n}\sum_{i=1}^n\langle x^-, A_i x^+ \rangle \langle u, A_iu\rangle|\\
&\leq&\sup_{\|u\|=1}\sqrt{\frac{1}{n}\sum_{i=1}^n\langle x^-, A_i x^+ \rangle^2}\sqrt{\frac{1}{n}\sum_{i=1}^n\langle u, A_iu\rangle^2}\\
&\leq&{\eta}\|x^-\|\|x^+\|.
\nnb\eda
Without loss of generality, we let $\|\bar{x}-x^*\|\leq\| \bar{x}+x^*\|$. Therefore, we use again the second result of Lemma \ref{err-sup-ub} to conclude that
\bea{lll}\label{n-phi-A}
\|\frac{1}{n}\sum_{i=1}^n\varphi_i(\bar{x})A_i\|&\leq&{\eta}\|x^-\|\|x^+\|
+2\sqrt{\frac{6\varsigma    ^2{\eta}p\log(1+2n)}{n}}\\
&\leq&{\eta}(2\|x^*\|+\|x^-\|)\|x^-\|
+2\sqrt{\frac{6\varsigma    ^2{\eta}p\log(1+2n)}{n}}\\
&\leq&2\sqrt{\frac{6\varsigma    ^2{\eta}p\log(1+2n)}{n}}\left[ {\eta}(2\|x^*\|+\|x^-\|)\frac{2}{{\lambda}\|x^*\|}
+1\right] ~~(\text{by Lemma \ref{oracle-conv-noise}})
\nnb\eda
with probability at least $1-p_1-p_2$.
Since \eqref{npto0} implies that $\sqrt{\frac{6\varsigma    ^2{\eta}p\log(1+2n)}{n}}\to0$ as $n\to\infty$, it follows that $\|\frac{1}{n}\sum_{i=1}^n\varphi_i(\bar{x})A_i\|$ can be sufficiently small. This  indicates \eqref{hessian-min} always holds for sufficiently large $n$ with probability at least $1-p_1-p_2$.  Then by Theorem \ref{convergence-rate} and  \eqref{def-c3}, we get
\bea{llll}
\lim_{k\to \infty} \frac{\|x^{k+1}-\bar{x}\|}{\|x^k-\bar{x}\|}
 &\leq& \lim_{k\to \infty} \left( c_1\| x^{k}-\bar{x}\| + c_2\|x^{k}-\bar{x}\| ^{\delta} +C\|\frac{1}{n}\sum_{i=1}^n \varphi_i(\bar{x})A_i\|\right)\\
 &=&  C\|\frac{1}{n}\sum_{i=1}^n \varphi_i(\bar{x})A_i\|,
 \nnb\eda
which together with $\|\frac{1}{n}\sum_{i=1}^n\varphi_i(\bar{x})A_i\|\to0$ with probability as $n\to\infty$  yields the desired result.
\end{proof}
\noindent { Finally, we obtain a similar result to Corollary \ref{cor-noiseless} from the above two theorems and Lemma \ref{sample-RIP}.
\begin{corollary}Let $\{A_1,\ldots,A_n\}$ be generated as Setup \ref{ass-A-i} and $\gamma\in(0,1/9)$. Suppose the errors in (\ref{PQMR}) satisfy Assumption \ref{ass-error} and are independent of $\{A_1,\ldots,A_n\}$. Let $\{ {x}^{k}\}$ be the sequence generated by Algorithm \ref{algorithm 0} with setting a sufficiently large (but finite) $K_0$ and $ \mu_2\in(0,1/2)$. If  (\ref{npto0})  holds, then the whole sequence converges to the true value of noisy model (\ref{PQMR}) with a high probability, and the convergence is superlinear in probability .
\end{corollary}}

\section{Numerical Examples}\label{sec:num}
In this section, we demonstrate the efficiency of our proposed algorithm GRNM.  In all experiments, the initial points for GRNM are chosen as ${x^0}/{f(0)}$, where the entries of $x^0\in\mathbb{R}^p$ or $x^0\in\mathbb{C}^p$ are generated from a standard Gaussian distribution. The involved parameters are set as follows. We choose  $\mu_1=0.1, \alpha_1=0.2$, $\epsilon_G=0.1p$, and $K_0=1$ to accelerate the computation for phase I and $\mu_2=0.1, \beta=0.5, \alpha_2=0.5,  \delta=0.25$ for phase II. Moreover, we terminate our algorithm if $\|g^k\|<10^{-5}$ in phase II or if the number of iterations reaches 5000.

We compare our method to the iterative
Wirtinger Flow (WF) method proposed by \cite{hgd19, hgd20}. To guarantee the best recovery performance of WF, we use its suggested spectral initial points for WF.
For  step size $\alpha$, we keep the default setting if $\sigma=1$ and adjust it as  $\alpha/\sigma$ otherwise, to avoid the increase of the objective function values. {Furthermore, as noted in a survey paper \cite{fann20}, some phase retrieval algorithms can be extended to natural generalizations of phase retrieval problems. For example, it has been shown in \cite{kueng17} that  Phaselift can be extended to low rank matrix recovery. Like in \cite{fann20}, we use the Phasepack toolbox \cite{chandra17} and adjust two kinds of algorithms, Phaselift and reweighted Wirtinger Flow (RWF), to the QMR. To demonstrate the performance of the four algorithms, we report the    CPU time (in second), relative error, and  success rate (defined by the rate of the number of successful recovery trials over the total trials). The relative error between $\hat{x}$ and $x^*$ is calculated by
\bea{l}
{\rm Relative~ error}  := \frac{\min_{\theta\in[0,2\pi]}\|x^*-\hat{x}e^{j\theta }\|}{\|x^*\|}
\eda
where $j$ is the  imaginary unit, $x^*$ is the  true signal, and $\hat{x}$ is the solution obtained by a solver.  We say that the recovery is successful if relative error $<10^{-5}$ for the noiseless case and relative error $ <5\times10^{-3}$ for the noisy case. }

\subsection{Real Gaussian measurements}
For each trial, the entries of true signal $x^*\in\mathbb{R}^p$ are generated from a standard Gaussian distribution and $n$ matrices ${B}_1,\ldots, {B}_n$ are  sampled independently and randomly with each entry of ${B}_i$ being an i.i.d. variable from $\mathcal{N}(0,\sigma^2)$, the normal distribution. Then we set
$A_i=( {B}_i+ {B}_i^T)/{2}$ and $b_i=\langle x^*, A_ix^*\rangle + \mathcal{N}(0,\sigma_\varepsilon^2)$ for $i\in[n]$.

{
\begin{table}[!h]
\renewcommand{\arraystretch}{1.0}\addtolength{\tabcolsep}{.5pt}
\begin{center}
\caption{Impact of two phases on GRNM. \label{table-GRNM}}
\begin{tabular}{lrcrrrcrrr} \hline
&\multicolumn{3}{c}{Phase I}&&\multicolumn{3}{c}{Phase II}&\\
$\epsilon_G$	&	Rel. error	&	Iterations	&	Time	&&	Rel. error	&	Iterations	&	Time	&&	Total Time	\\\cline{2-4}\cline{6-8}
&\multicolumn{9}{c}{$\sigma_\varepsilon=0$} \\\hline
$0.5p$	&	9.95e-1	&	2.0 	&	0.0102 	&&	2.47e-9	&	7.4 	&	0.0506 	&&	0.0608 	\\
$0.1p$	&	7.50e-1	&	4.6 	&	0.0252 	&&	1.61e-9	&	6.6 	&	0.0472 	&&	0.0724 	\\
$0.05p$	&	6.95e-3	&	17.4 	&	0.0978 	&&	1.37e-9	&	3.0 	&	0.0143 	&&	0.1121 	\\
$0.01p$	&	1.13e-3	&	23.0 	&	0.1270 	&&	1.75e-9	&	2.0 	&	0.0103 	&&	0.1373 	\\
$0.005p$	&	7.53e-4	&	25.0 	&	0.1397 	&&	1.65e-9	&	2.0 	&	0.0101 	&&	0.1499 	\\
$0.0001p$	&	2.43e-5	&	36.9 	&	0.1915 	&&	1.33e-8	&	1.0 	&	0.0063 	&&	0.1977 	\\ \hline
&\multicolumn{9}{c}{$\sigma_\varepsilon=0.1$} \\\hline
$0.5p$	&	9.97e-1	&	2.0 	&	0.0104 	&&	4.22e-4	&	7.5 	&	0.0561 	&&	0.0665 	\\
$0.1p$	&	8.76e-1	&	3.3 	&	0.0304 	&&	4.22e-4	&	7.0 	&	0.0525 	&&	0.0829 	\\
$0.05p$	&	5.92e-3	&	17.5 	&	0.0985 	&&	4.22e-4	&	3.0 	&	0.0136 	&&	0.1121 	\\
$0.01p$	&	1.29e-3	&	24.0 	&	0.1298 	&&	4.22e-4	&	2.1 	&	0.0114 	&&	0.1412 	\\
$0.005p$	&	7.97e-4	&	26.9 	&	0.1471 	&&	4.22e-4	&	2.0 	&	0.0090 	&&	0.1560 	\\
$0.0001p$	&	4.22e-4	&	37.8 	&	0.1994 	&&	4.22e-4	&	1.1 	&	0.0064 	&&	0.2058 	\\

\hline
 \end{tabular}
\end{center}
\end{table}

First, we examine the impact of two phases on GRNM by selecting different values of $\epsilon_G$ and running 100 trials to report the average results. For simplicity, we set $p = 100$, $n = 4p$, and $\sigma = 1$. As shown in Table \ref{table-GRNM}, $\sigma_\varepsilon = 0$ and $0.1$ represent the noiseless and noisy models of (\ref{PQMR}), respectively. It is evident that a smaller $\epsilon_G$ results in higher relative error and fewer iterations in phase I but more iterations in phase II. However, there is little difference in the final relative error (namely, the ones in phase II). Notably, a smaller $\epsilon_G$ leads to shorter total computational time. Therefore, we set $\epsilon_G = 0.1p$ to accelerate computation in the subsequent numerical experiments.

\begin{figure}[!t]
\centering
\begin{subfigure}{.495\linewidth}\centering
\includegraphics[scale=.75]{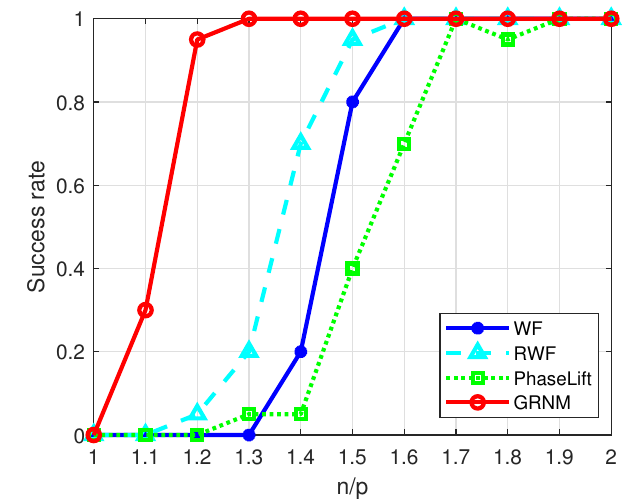} \caption{$\sigma_\varepsilon=0$}\label{fig-effect-n-noiseless}
\end{subfigure}~~
\begin{subfigure}{.495\linewidth}\centering
\includegraphics[scale=.75]{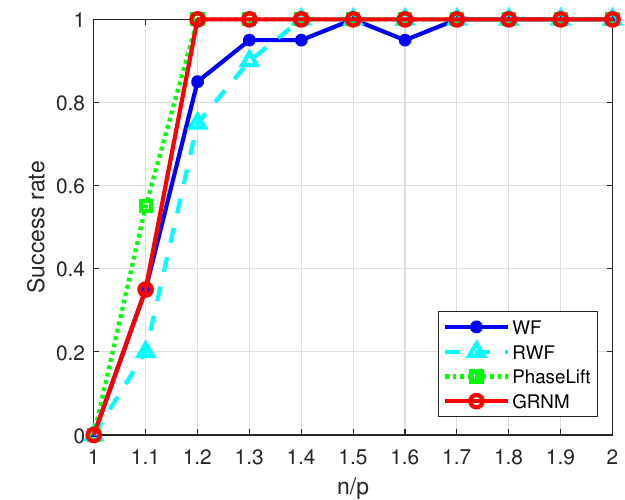}
\caption{$\sigma_\varepsilon=0.1$} \label{fig-effect-n-noise}
\end{subfigure}
\caption{Success rate v.s. $n/p$ for real Gaussian measurements.}\label{fig-effect-n}
\end{figure}%
\vspace{-2mm}

\begin{figure}[!t]
\centering
\begin{subfigure}{.495\linewidth}\centering
\includegraphics[scale=0.75]{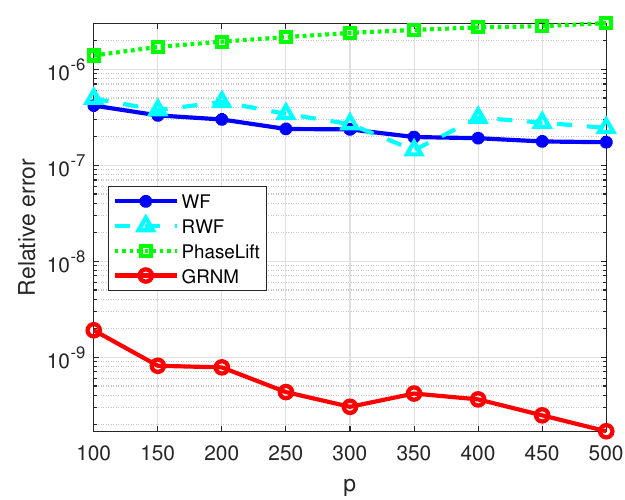}
\caption{$\sigma_\varepsilon=0$}\label{fig-effect-p-relerr-noiseless}
\end{subfigure}~~~
\begin{subfigure}{.495\linewidth}
\includegraphics[scale=0.75]{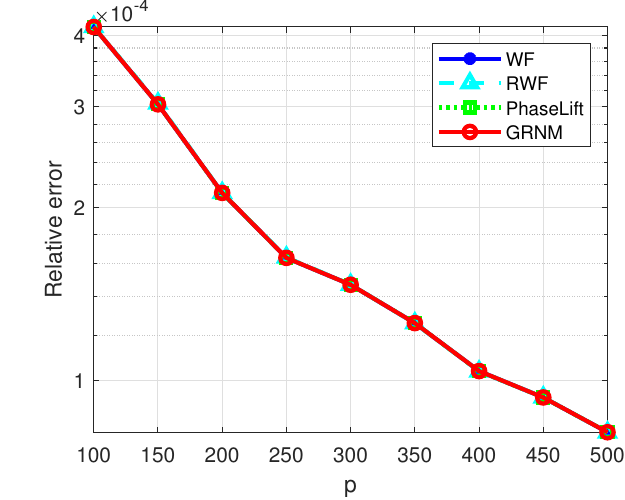}
\caption{$\sigma_\varepsilon=0.1$}\label{fig-effect-p-relerr-noise}
\end{subfigure}\\ 
\begin{subfigure}{.495\linewidth}
\includegraphics[scale=0.75]{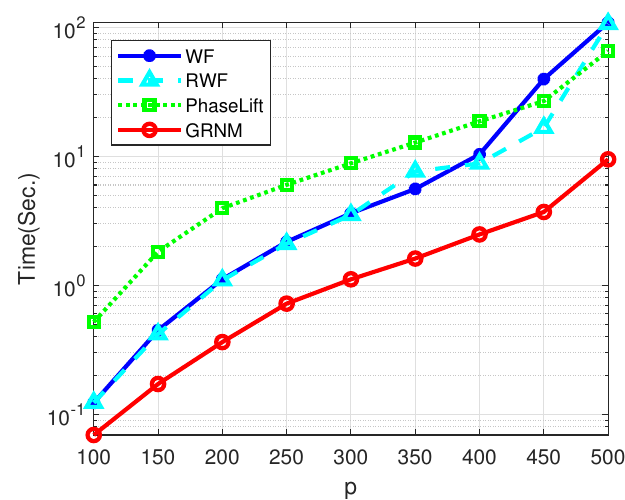}
\caption{$\sigma_\varepsilon=0$}\label{fig-effect-p-time-noiseless}
\end{subfigure}~~~
\begin{subfigure}{.495\linewidth}
\includegraphics[scale=0.75]{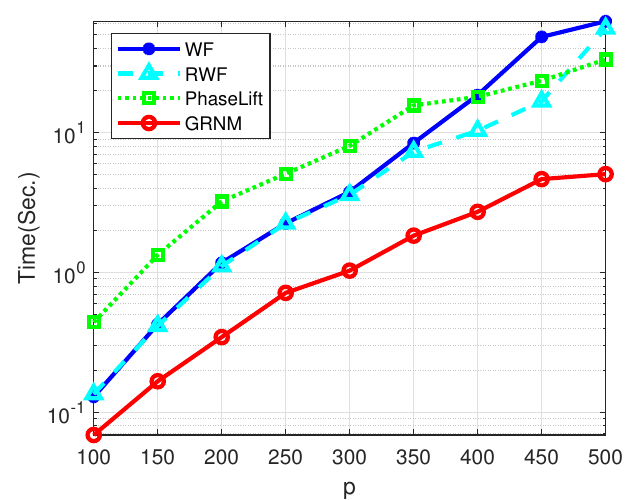}
\caption{$\sigma_\varepsilon=0.1$}\label{fig-effect-p-time-noise}
\end{subfigure}
\caption{Relative error and CPU time v.s. $p$ for real Gaussian measurements.}\label{fig-effect-p}
\end{figure}%

\textbf{(a) Effect of ${n}/{p}$.} We fix $\sigma=1$ and $ p=100$ but alter ${n}/{p}\in\{1,1.1,\ldots,1.9,2\}$ and $\sigma_\varepsilon\in\{0,0.1\}$.  For each combination of $(p,n,\sigma,\sigma_\varepsilon)$, we run 100 trials and report the success rate in Figure \ref{fig-effect-n}. It can be clearly seen that GRNM achieves a much higher success rate than the other three algorithms for the noiseless cases and slightly better results for the noisy cases.

\textbf{(b) Effect of $p$}. We fix $\sigma=1$ and $ n=4p$, but vary $p \in \{100,150,\ldots,450,500\}$ and $\sigma_\varepsilon\in\{0,0.1\}$. We run 25 random trials for each combination of $(p,n,\sigma,\sigma_\varepsilon)$ as WF takes too long time when $p\geq 450$. Since all methods have $100\%$ success rates when $n=4p$ as shown in Figure \ref{fig-effect-n}, we report the relative error and CPU time in Figure \ref{fig-effect-p}. Evidently, GRNM achieves a much higher accuracy than the other three algorithms for the noiseless cases and runs much faster for both cases.

\textbf{(c) Effect of $\sigma$}. We fix $p=100$ and $n=400$, but vary  $\sigma_\varepsilon\in\{0,0.1\}$ and $\sigma\in\{1,2,\ldots,10\}$ to see the effect of $\sigma$. The average results over 100 trials are reported in Table  \ref{table-rA}. Apparently, our method GRMN outperforms the others. In particular, it runs the fastest.

\begin{table}[th]
\renewcommand{\arraystretch}{1.1}\addtolength{\tabcolsep}{3pt}
\begin{center}
\caption{Effect of $\sigma$ for real Gaussian measurements. \label{table-rA}}
\begin{tabular}{c c c c c c c c c c} \hline
   &  \multicolumn{4}{c}{Relative error}
 &  &  \multicolumn{4}{c}{CPU Time} \\
 $\sigma$ & GRMN    & WF      & RWF     & Phaselift  &&GRMN      & WF      & RWF      & Phaselift  \\\cline{2-5}\cline{7-10}
 	&    \multicolumn{9}{c}{$\sigma_\varepsilon=0$} \\\hline
1         &2.10e-9  &1.18e-6  &5.46e-7  &1.42e-6     &&0.062     &0.413    &0.275     &0.619\\
2         &4.78e-10 &1.14e-6  &4.87e-7  &1.01e-6     &&0.066     &0.459    &0.312     &0.735\\
3         &1.26e-10 &1.15e-6  &5.41e-7  &8.11e-7     &&0.060     &0.441    &0.248     &0.670\\
4         &1.13e-10 &1.14e-6  &5.30e-7  &7.04e-7     &&0.058     &0.437    &0.253     &0.664\\
5         &5.55e-11 &1.15e-6  &4.33e-7  &6.39e-7     &&0.062     &0.450    &0.247     &0.667\\
6         &4.81e-11 &1.14e-6  &5.59e-7  &5.74e-7     &&0.059     &0.449    &0.250     &0.668\\
7         &2.44e-11 &1.15e-6  &4.46e-7  &5.30e-7     &&0.064     &0.454    &0.254     &0.667\\
8         &2.64e-11 &1.57e-6  &4.83e-7  &5.03e-7     &&0.072     &0.464    &0.273     &0.669\\
9         &1.15e-11 &1.16e-6  &5.12e-7  &4.77e-7     &&0.066     &0.456    &0.266     &0.666\\
10        &1.68e-11 &1.16e-6  &4.46e-7  &4.50e-7     &&0.065     &0.459    &0.275     &0.666\\
\hline
	&    \multicolumn{9}{c}{$\sigma_\varepsilon=0.1$} \\\hline
1         &4.33e-4  &4.33e-4  &4.32e-4  &4.33e-4     &&0.080     &0.470    &0.208     &0.456\\
2         &2.19e-4  &2.19e-4  &2.19e-4  &2.19e-4     &&0.072     &0.486    &0.200     &0.431\\
3         &1.31e-4  &1.31e-4  &1.31e-4  &1.31e-4     &&0.063     &0.461    &0.225     &0.490\\
4         &9.40e-5  &9.39e-5  &9.41e-5  &9.40e-5     &&0.054     &0.438    &0.234     &0.543\\
5         &7.97e-5  &7.97e-5  &7.98e-5  &7.97e-5     &&0.062     &0.466    &0.243     &0.541\\
6         &5.97e-5  &5.98e-5  &5.98e-5  &5.97e-5     &&0.071     &0.505    &0.267     &0.582\\
7         &6.03e-5  &6.02e-5  &6.02e-5  &6.03e-5     &&0.068     &0.484    &0.257     &0.565\\
8         &5.16e-5  &5.15e-5  &5.15e-5  &5.16e-5     &&0.055     &0.452    &0.232     &0.573\\
9         &4.62e-5  &4.62e-5  &4.64e-5  &4.62e-5     &&0.058     &0.505    &0.291     &0.561\\
10        &4.20e-5  &4.20e-5  &4.19e-5  &4.20e-5     &&0.071     &0.480    &0.264     &0.551\\
\hline
 \end{tabular}
\end{center}
\end{table}
\vspace{-3mm}

\begin{figure}[!th]
\centering
\begin{subfigure}{.49\linewidth}
\includegraphics[scale=0.75]{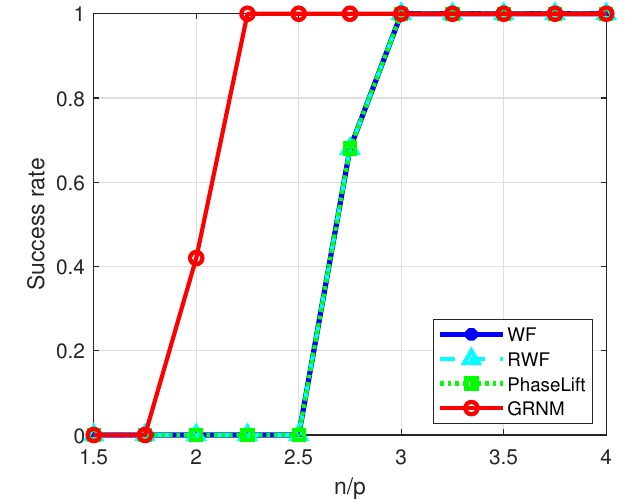}
\caption{$\sigma_\varepsilon=0$}
\end{subfigure}
\begin{subfigure}{.49\linewidth}
\includegraphics[scale=0.75]{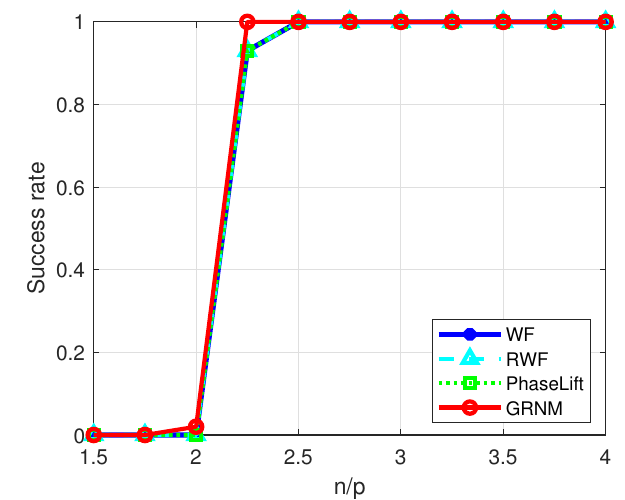}
\caption{$\sigma_\varepsilon=0.1$}
\end{subfigure}
\caption{Success rate v.s. $n/p$ for complex Gaussian measurements.}\label{fig-effect-n-complex}
\end{figure}%

\begin{figure}[!h]
\centering
\begin{subfigure}{.495\linewidth}
\includegraphics[scale=0.75]{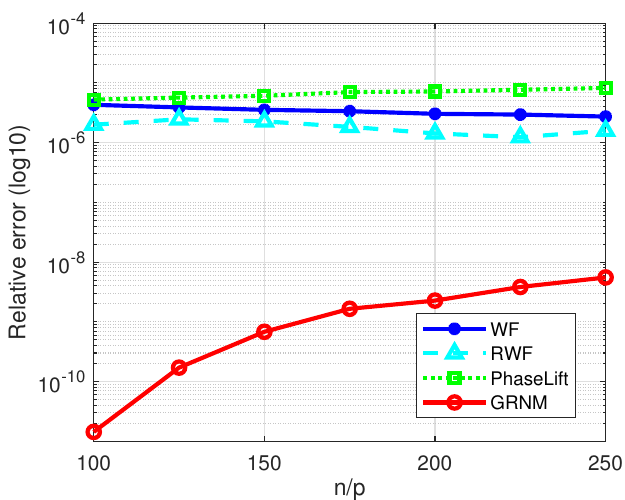}
\caption{$\sigma_\varepsilon=0$}
\end{subfigure}
\begin{subfigure}{.495\linewidth}
\includegraphics[scale=0.75]{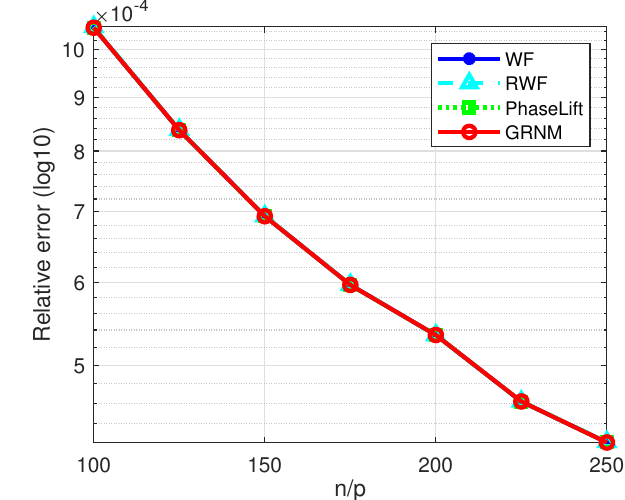}
\caption{$\sigma_\varepsilon=0.1$}
\end{subfigure}\\
\begin{subfigure}{.495\linewidth}
\includegraphics[scale=0.75]{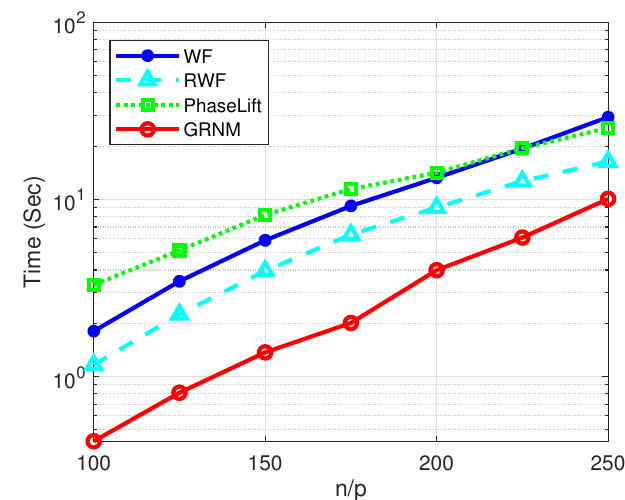}
\caption{$\sigma_\varepsilon=0$}
\end{subfigure}
\begin{subfigure}{.495\linewidth}
\includegraphics[scale=0.75]{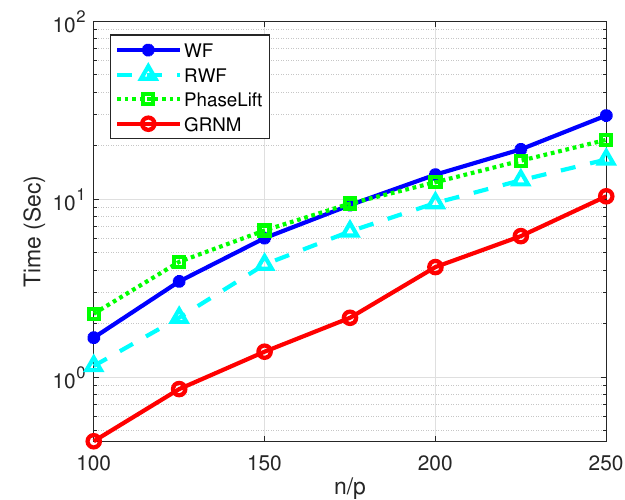}
\caption{$\sigma_\varepsilon=0.1$}
\end{subfigure}
\caption{Relative error and CPU time v.s. $p$ for complex Gaussian measurements.}\label{fig-effect-p-complex}
\end{figure}%

\subsection{Complex Gaussian measurements}\label{subsec:ccm}
For each trial, we first generate a random complex
signal $x^*\in\mathbb{C}^p$ with real and imaginary coefficients from $\mathcal{N}(0,1)$, followed by normalization to have a unit length, and then generate $2n$ random real Gaussian matrices ${R}_1,\ldots,{R}_n$, ${I}_1,\ldots,{I}_n$ independently, with  each entry of ${R}_i$ and ${I}_i$ being an i.i.d. variable from $\mathcal{N}(0,\sigma^2)$. Finally, we set
$A_i=({R}_i+ {R}_i^T+j( {I}_i- {I}_i^T))/{2}$ and $b_i=\langle x^*, A_ix^*\rangle +\mathcal{N}(0,\sigma_\varepsilon^2)$ for $i\in[n]$.


{\bf (d) Effect of $n/p$.} To examine the effect of $n/p$, we fix $\sigma=1$ and  $p=100$, but alter ${n}/{p}\in\{1.5,1.75,\ldots,3.75,4\}$ and $\sigma_\varepsilon\in\{0,0.1\}$. For each combination of  $(p,n,\sigma,\sigma_\varepsilon)$, we run 100 trials and report the success rate in Figure \ref{fig-effect-n-complex}, where the curves show that GRNM produces a much better success rate than the other three methods for the noiseless cases and yields slightly better results for the noisy cases.

{\bf (e) Effect of $p$.} Next, we fix $\sigma=1$ and $n=4p$ but vary  $p \in \{100,125,\ldots,225,250\}$ and $\sigma_\varepsilon\in\{0,0.1\}$ to see the effect of $p$. Average results over $25$ trials are presented in Figure \ref{fig-effect-p-complex},  where GRNM achieves the lowest relative errors (i.e., highest order of accuracy) for the noiseless cases and runs the fastest in all cases.

 {\bf (f) Effect of $\sigma$.}  Similarly, we fix $p=100$ and $n=400$ but choose $\sigma_\varepsilon\in\{0,0.1\}$ and $\sigma\in\{1,2,\ldots,10\}$. The average results over 100 trials are reported in Table \ref{table-cgau}. Once again, our method outperforms the others due to the highest accuracy and the fastest computational speed.

\begin{table}[th]
\renewcommand{\arraystretch}{1.1}\addtolength{\tabcolsep}{3pt}
\begin{center}
\caption{Effect of $\sigma$ for complex Gaussian measurements. \label{table-cgau}}
\begin{tabular}{c c c c c c c c c c} \hline
	& \multicolumn{4}{c}{Relative error}
	&  & \multicolumn{4}{c}{CPU Time} \\
	$\sigma$ & GRMN    & WF      & RWF     & Phaselift  &&GRMN      & WF      & RWF      & Phaselift  \\\cline{2-5}\cline{7-10}
		&    \multicolumn{9}{c}{$\sigma_\varepsilon=0$} \\\hline
	1         &2.49e-9  &4.37e-6  &2.02e-6  &5.24e-6     &&0.412     &2.463    &1.065     &2.829\\
	2         &1.39e-9  &4.43e-6  &2.04e-6  &3.58e-6     &&0.440     &2.474    &1.318     &3.363\\
	3         &1.83e-9  &4.27e-6  &2.76e-6  &2.99e-6     &&0.421     &2.526    &1.462     &3.315\\
	4         &2.49e-10 &4.26e-6  &1.94e-6  &2.56e-6     &&0.414     &2.649    &1.455     &3.004\\
	5         &3.12e-10 &4.43e-6  &2.88e-6  &2.30e-6     &&0.405     &2.477    &1.473     &3.017\\
	6         &1.95e-10 &4.13e-6  &2.62e-6  &2.01e-6     &&0.406     &2.478    &1.631     &3.070\\
	7         &2.50e-10 &4.32e-6  &2.43e-6  &2.01e-6     &&0.401     &2.509    &1.678     &2.996\\
	8         &1.09e-10 &4.43e-6  &1.99e-6  &1.73e-6     &&0.414     &2.515    &1.702     &3.098\\
	9         &1.07e-10 &4.25e-6  &2.02e-6  &1.66e-6     &&0.410     &2.509    &1.804     &3.007\\
	10        &8.36e-11 &4.44e-6  &2.65e-6  &1.69e-6     &&0.410     &2.498    &2.479     &2.979\\
	\hline
	&    \multicolumn{9}{c}{$\sigma_\varepsilon=0.1$} \\\hline
	1         &1.04e-3  &1.04e-3  &1.04e-3  &1.04e-3     &&0.436     &2.384    &1.254     &2.409\\
	2         &5.27e-4  &5.27e-4  &5.27e-4  &5.27e-4     &&0.424     &2.221    &1.255     &2.431\\
	3         &3.50e-4  &3.50e-4  &3.50e-4  &3.50e-4     &&0.420     &2.245    &1.317     &2.442\\
	4         &2.72e-5  &2.72e-4  &2.72e-4  &2.72e-4     &&0.427     &2.270    &1.408     &2.568\\
	5         &2.04e-4  &2.04e-4  &2.04e-4  &2.04e-4     &&0.420     &2.228    &1.402     &2.660\\
	6         &1.72e-4  &1.73e-4  &1.72e-4  &1.72e-4     &&0.420     &2.270    &1.544     &2.454\\
	7         &1.48e-4  &1.49e-4  &1.49e-4  &1.49e-4     &&0.417     &2.214    &1.648     &2.470\\
	8         &1.33e-4  &1.33e-4  &1.33e-4  &1.33e-4     &&0.417     &2.290    &1.678     &2.543\\
	9         &1.18e-4  &1.17e-4  &1.18e-4  &1.18e-4     &&0.418     &2.346    &1.707     &2.573\\
	10        &1.04e-4  &1.04e-4  &1.04e-4  &1.04e-4     &&0.420     &2.459    &1.946     &2.532\\
	\hline
\end{tabular}
\end{center}
\end{table}

\begin{figure}[!th]
\centering
\begin{subfigure}{.49\linewidth}
\includegraphics[scale=0.75]{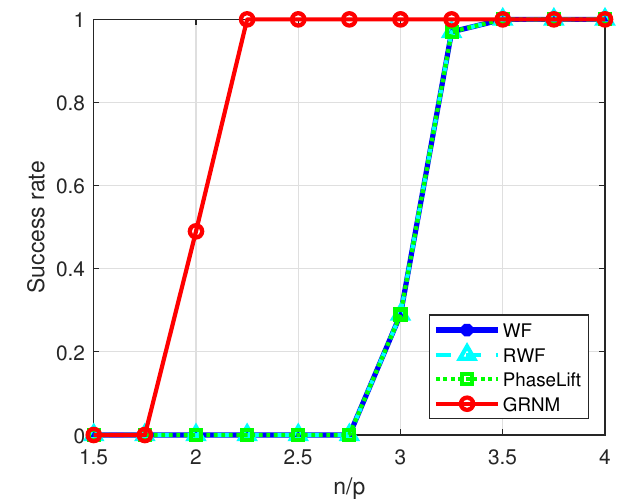}
\caption{$\sigma_\varepsilon=0$}
\end{subfigure}
\begin{subfigure}{.49\linewidth}
\includegraphics[scale=0.75]{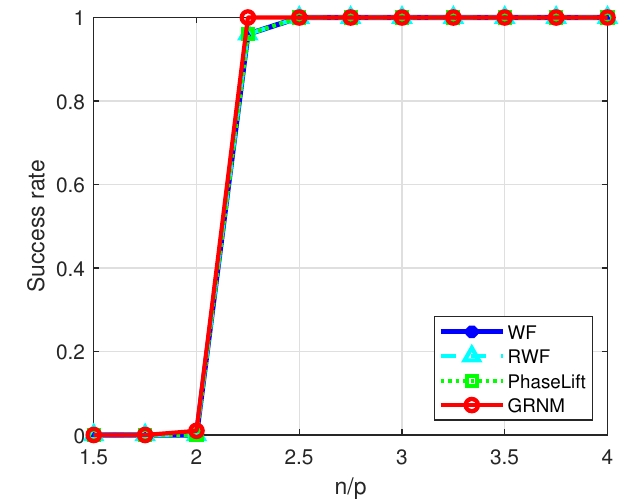}
\caption{$\sigma_\varepsilon=0.1$}
\end{subfigure}
\caption{Success rate v.s. $n/p$ for complex sub-Gaussian measurements.}\label{fig-effect-n-complex-sub}
\end{figure}%

\begin{figure}[!h]
\centering
\begin{subfigure}{.495\linewidth}
\includegraphics[scale=0.75]{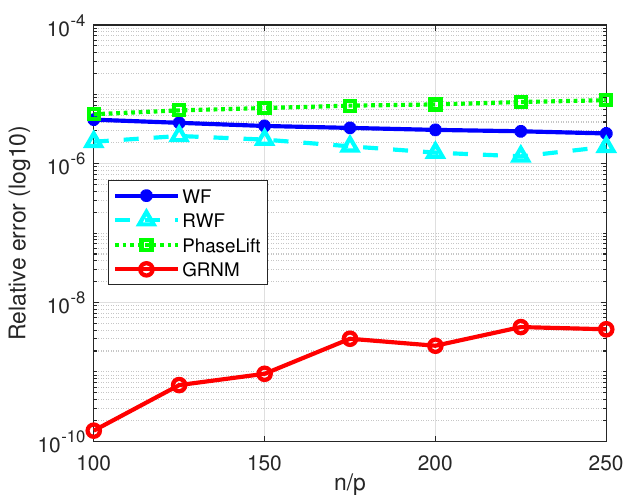}
\caption{$\sigma_\varepsilon=0$}
\end{subfigure}
\begin{subfigure}{.495\linewidth}
\includegraphics[scale=0.75]{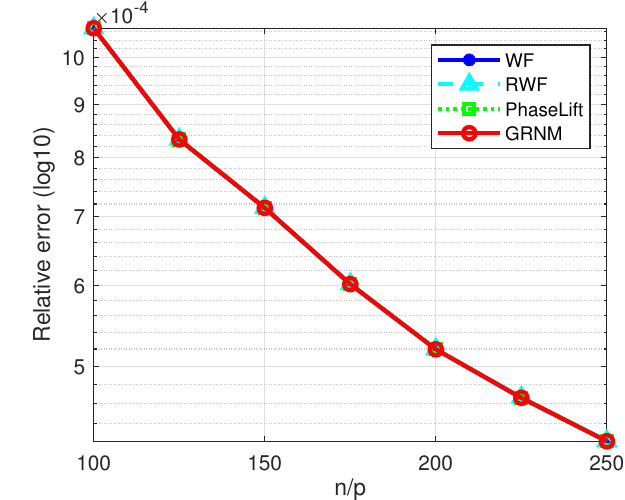}
\caption{$\sigma_\varepsilon=0.1$}
\end{subfigure}\\
\begin{subfigure}{.495\linewidth}
\includegraphics[scale=0.75]{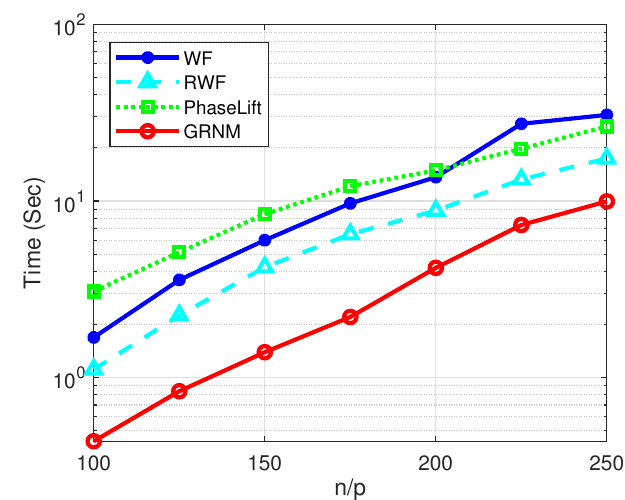}
\caption{$\sigma_\varepsilon=0$}
\end{subfigure}
\begin{subfigure}{.495\linewidth}
\includegraphics[scale=0.75]{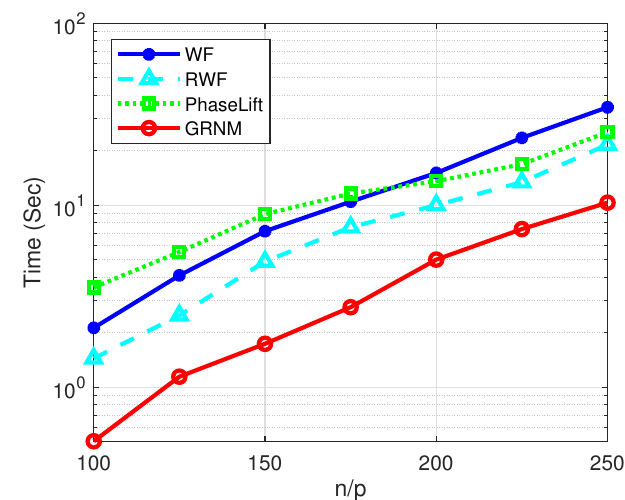}
\caption{$\sigma_\varepsilon=0.1$}
\end{subfigure}
\caption{Relative error and CPU time v.s. $p$ for complex sub-Gaussian measurements.}\label{fig-effect-p-complex-sub}
\end{figure}%

\subsection{Complex sub-Gaussian measurements}
We first generate a random complex signal $x^*\in\mathbb{C}^p$ as described in Section \ref{subsec:ccm} and $n$ complex random rotation-invariant sub-Gaussian matrices, $B_1,\ldots,B_n$ as used in \cite{hgd20}. We then set $A_i=(\sigma B_i+\sigma B_i^{H})/2$ and $b_i=\langle x^*, A_ix^*\rangle +\mathcal{N}(0,\sigma_\varepsilon^2)$ for $i\in[n]$.

{\bf (g) Effect of $n/p$.} To see this effect, we fix $\sigma=1$ and  $p=100$ but select $\sigma_\varepsilon\in\{0,0.1\}$ and  ${n}/{p}\in\{1.5,1.75,\ldots,3.75,4\}$.  The average results over 100 trials are shown in Figure \ref{fig-effect-n-complex-sub}. The observations are similar to those in Figure \ref{fig-effect-n-complex}, namely, GRNM significantly outperforms the others for the noiseless cases and shows slightly better results for the noisy cases.

{\bf (h) Effect of $p$.} To proceed with that, we fix $\sigma=1, n=4p$ but vary $\sigma_\varepsilon\in\{0,0.1\}$ and $p \in \{100,125,\ldots,225,250\}$.  The average results over 25 trials are displayed in Figure \ref{fig-effect-p-complex-sub}. It is clear that GRNM delivers a relatively high order of accuracy for the noiseless cases and consumes the shortest computational time for all scenarios.

{\bf (i) Effect of $\sigma$.}  Finally, we fix $p=100$ and $n=400$ but choose $\sigma\in\{1,2,\ldots,10\}$ and $\sigma_\varepsilon\in\{0,0.1\}$.  The average results over 100 trials are reported in Table \ref{table-csubgau}. The observations are similar to those in Table \ref{table-cgau}, namely, GRMN obtains the most accurate solutions and runs the fastest.}

\begin{table}[th]
\renewcommand{\arraystretch}{1.1}\addtolength{\tabcolsep}{3pt}
\begin{center}
\caption{Effect of $\sigma$ for complex sub-Gaussian measurements. \label{table-csubgau}}
\begin{tabular}{c c c c c c c c c c} \hline
	& \multicolumn{4}{c}{Relative error}
	&  & \multicolumn{4}{c}{CPU Time} \\
	$\sigma$ & GRMN    & WF      & RWF     & Phaselift  &&GRMN      & WF      & RWF      & Phaselift  \\\cline{2-5}\cline{7-10}
		&    \multicolumn{9}{c}{$\sigma_\varepsilon=0$} \\\hline
	1         &1.95e-9  &4.45e-6  &2.14e-6  &5.24e-6     &&0.465     &2.421    &1.226     &3.188\\
	2         &1.47e-9  &4.44e-6  &2.09e-6  &3.76e-6     &&0.409     &2.187    &1.190     &2.981\\
	3         &1.18e-9  &4.49e-6  &2.81e-6  &3.04e-6     &&0.405     &2.264    &1.298     &2.936\\
	4         &8.28e-10 &4.22e-6  &2.01e-6  &2.67e-6     &&0.398     &2.292    &1.325     &2.850\\
	5         &2.75e-10 &4.24e-6  &2.89e-6  &2.25e-6     &&0.401     &2.346    &1.441     &2.937\\
	6         &1.55e-10 &4.27e-6  &2.63e-6  &2.08e-6     &&0.405     &2.198    &1.478     &2.995\\
	7         &7.78e-11 &4.36e-6  &2.34e-6  &1.86e-6     &&0.401     &2.368    &1.639     &3.044\\
	8         &6.26e-11 &4.50e-6  &2.07e-6  &1.82e-6     &&0.411     &2.278    &1.553     &2.876\\
	9         &9.54e-11 &4.39e-6  &2.10e-6  &1.73e-6     &&0.414     &2.349    &1.765     &3.349\\
	10        &2.26e-10 &4.21e-6  &2.36e-6  &1.66e-6     &&0.412     &2.479    &1.919     &3.069\\
	\hline
	&    \multicolumn{9}{c}{$\sigma_\varepsilon=0.1$} \\\hline
	1         &1.06e-3  &1.06e-3  &1.06e-3  &1.06e-3     &&0.402     &2.037    &1.059     &2.098\\
	2         &5.12e-4  &5.13e-4  &5.13e-4  &5.13e-4     &&0.416     &2.211    &1.225     &2.281\\
	3         &3.52e-4  &3.52e-4  &3.53e-4  &3.52e-4     &&0.447     &2.527    &1.554     &3.028\\
	4         &2.57e-5  &2.57e-4  &2.57e-4  &2.57e-4     &&0.413     &2.397    &1.430     &2.561\\
	5         &2.06e-4  &2.07e-4  &2.07e-4  &2.07e-4     &&0.445     &2.596    &1.639     &2.705\\
	6         &1.74e-4  &1.74e-4  &1.74e-4  &1.74e-4     &&0.409     &2.261    &1.523     &2.558\\
	7         &1.54e-4  &1.54e-4  &1.54e-4  &1.54e-4     &&0.418     &2.268    &1.570     &2.499\\
	8         &1.33e-4  &1.33e-4  &1.33e-4  &1.33e-4     &&0.46      &2.315    &1.797     &2.657\\
	9         &1.16e-4  &1.16e-4  &1.16e-4  &1.16e-4     &&0.442     &2.502    &1.952     &3.204\\
	10        &1.07e-4  &1.07e-4  &1.07e-4  &1.07e-4     &&0.425     &2.437    &1.914     &2.625\\
	\hline
\end{tabular}
\end{center}
\end{table}

\section{Conclusion}
Our main theory has shown that the least squares model is capable of recovering the true signal from quadratic measurements as long as they are sub-Gaussian rather than Gaussian, regardless of the presence of noise.  The model was effectively solved using a gradient regularized Newton method, which is proven to converge globally and superlinearly. This leads to the freedom from dependence on initial points and achieves fast computational speed. Finally, the efficiency of the proposed algorithm was testified in both theoretical and numerical perspectives.

\vskip 14pt
\noindent {\large\bf Acknowledgements}
The research was supported by
  the National Natural Science Foundation of China (11801130), the Fundamental Research Funds for the Central Universities, the Talent Fund of Beijing Jiaotong University, and the Natural Science Foundation of Hebei Province
(A2019202135).
\par

\end{document}